\newtheorem{lemma}{Lemma}[section]
\newtheorem{theorem}[lemma]{Theorem}
\newtheorem{prop}[lemma]{Proposition}
\newtheorem{cor}[lemma]{Corollary}
\newtheorem{conj}[lemma]{Conjecture}
\newtheorem{claim*}{Claim}
\newtheorem{thm}[lemma]{Theorem}
\newtheorem{defn}[lemma]{Definition}
\newtheorem*{theorem_nonum}{Theorem}
\newtheorem*{cor_nonum}{Corollary}
\theoremstyle{remark}
\newtheorem*{question*}{Question}
\newtheorem{remark}[lemma]{Remark}
\newtheorem{example}[lemma]{Example}
\newcommand{\F}{{\mathbb F}}
\newcommand{\Q}{{\mathbb Q}}
\newcommand{\Z}{{\mathbb Z}}
\newcommand{\BBl}{{\mathbb B}_{\ell, \infty}}
\newcommand{\WW}{{\mathbb W}}
\newcommand{\Fbar}{{\overline{\F}}}
\newcommand{\Ebar}{{\overline{E}}}
\newcommand{\ppbar}{{\overline{\pp}}}
\newcommand{\alphabar}{{\overline{\alpha}}}
\newcommand{\betabar}{{\overline{\beta}}}
\newcommand{\Rtilde}{{\widetilde{R}}}
\newcommand{\lambdatilde}{{\widetilde{\lambda}}}
\newcommand{\deltatilde}{{\widetilde{\delta}}}
\newcommand{\pp}{{\mathfrak p}}
\newcommand{\qq}{{\mathfrak q}}
\newcommand{\calO}{{\mathcal O}}
\newcommand{\OO}{{\mathcal O}}
\newcommand{\fraka}{\mathfrak{a}}
\newcommand{\frakb}{\mathfrak{b}}
\newcommand{\frakc}{\mathfrak{c}}
\newcommand{\frakl}{\mathfrak{l}}
\newcommand{\frakp}{\mathfrak{p}}
\newcommand{\frakq}{\mathfrak{q}}
\newcommand{\frakr}{\mathfrak{r}}
\newcommand{\frakabar}{\overline{\mathfrak{a}}}
\newcommand{\frakcbar}{\overline{\mathfrak{c}}}
\newcommand{\frakA}{\mathfrak{A}}
\newcommand{\frakD}{\mathfrak{D}}
\DeclareMathOperator{\Tr}{Tr}
\DeclareMathOperator{\im}{im}
\DeclareMathOperator{\End}{End}
\DeclareMathOperator{\Lie}{Lie}
\DeclareMathOperator{\Hom}{Hom}
\DeclareMathOperator{\Gal}{Gal}
\DeclareMathOperator{\Norm}{N}
\DeclareMathOperator{\Pic}{Pic}
\DeclareMathOperator{\NN}{N}
\DeclareMathOperator{\Mat}{M}
\DeclareMathOperator{\SL}{SL}
\DeclareMathOperator{\disc}{disc}
\DeclareMathOperator{\cond}{cond}
\DeclareMathOperator{\Isom}{Isom}
\DeclareMathOperator{\unr}{unr}
\newcommand{\isom}{\cong}
\newcommand{\into}{\hookrightarrow}
\numberwithin{equation}{section}
\numberwithin{table}{section}
\newcommand{\defi}[1]{\textsf{#1}} 
\title{On singular moduli for arbitrary discriminants}
\author{Kristin Lauter}
\author{Bianca Viray}
\thanks{The second author was partially supported by Microsoft Research, a Ford dissertation year fellowship, National Science Foundation grant DMS-1002933, and the Institute for Computational and Experimental Research in Mathematics.}
\address{Microsoft Research, 1 Microsoft Way, Redmond, WA 98062, USA}
\email{klauter@microsoft.com}
\urladdr{http://research.microsoft.com/en-us/people/klauter/default.aspx}
\address{University of Washington, Department of Mathematics, Box 354350, Seattle, WA 98195, USA}
\email{bviray@math.washington.edu}
\urladdr{http://math.washington.edu/\~{}bviray}
\date{}
\subjclass[2010]{11G15; 11G20}
\keywords{singular moduli, supersingular elliptic curves, endomorphism rings, complex multiplication}
\begin{document}

	\begin{abstract}
		Let $d_1$ and $d_2$ be discriminants of distinct quadratic imaginary orders $\OO_{d_1}$ and $\OO_{d_2}$ and let $J(d_1,d_2)$ denote the product of differences of CM $j$-invariants with discriminants $d_1$ and $d_2$.  In 1985, Gross and Zagier gave an elegant formula for the factorization of the integer $J(d_1,d_2)$ in the case that $d_1$ and $d_2$ are relatively prime and discriminants of \emph{maximal} orders.  {To compute this formula, they first reduce the problem to counting the number of simultaneous embeddings of $\OO_{d_1}$ and $\OO_{d_2}$ into endomorphism rings of supersingular curves, and then solve this counting problem.}

        {Interestingly, this counting problem also appears when computing class polynomials for invariants of genus $2$ curves.  However, in this application, one must consider orders $\OO_{d_1}$ and $\OO_{d_2}$ that are \emph{non}-maximal.}
{Motivated by the application to genus $2$ curves,} we generalize the methods of Gross and Zagier and give a computable formula for $v_{\ell}(J(d_1,d_2))$ for any pair of discriminants $d_1\neq d_2$ and any prime $\ell>2$.
In the case that $d_1$ is squarefree and $d_2$ is the discriminant of {any} quadratic imaginary order, our formula can be stated in a simple closed form.  We also give a conjectural closed formula when the conductors of $d_1$ and $d_2$ are relatively prime.
	\end{abstract}

	\maketitle

\section{Introduction}
	Let $d_1$ and $d_2$ be discriminants of distinct quadratic imaginary orders and write
	\[
		J(d_1,d_2) := \prod_{\substack{[\tau_1], [\tau_2]\\\textup{disc }\tau_i = d_i}}
		\left(j(\tau_1) - j(\tau_2)\right),
	\]
	where $[\tau_i]$ runs over all elements of the upper half-plane with discriminant $d_i$ modulo $\SL_2(\Z)$.
In 1985, under the assumption that $d_1$ and $d_2$ are relatively prime and discriminants of \emph{maximal} orders, Gross and Zagier showed that $J(d_1, d_2)^{\frac8{w_1w_2}}$ is an integer and gave an elegant formula for its factorization.
	\begin{theorem_nonum}{\cite[Thm. 1.3]{GZ-SingularModuli}}
		Let $d_1$ and $d_2$ be two relatively prime fundamental discriminants of imaginary quadratic fields.  For any prime $p$
dividing a positive integer of the form $d_1 d_2 - x^2$, choose $i$ such that $p \nmid d_i$ and define $\epsilon(p) = \left(\frac{d_i}{p}\right)$, and extend the definition of $\epsilon$ by multiplicativity.  Then
		\[
			J(d_1, d_2)^{\frac8{w_1w_2}} =
			\pm\prod_{\substack{x\in\Z, |x| < d_1d_2\\x\equiv d_1d_2\bmod 2}} 		
				F\left(\frac{d_1d_2 - x^2}{4}\right),
		\]
		where $F(m) = \displaystyle\prod_{n|m, n>0} n^{\epsilon(m/n)}$ and $w_i$ denotes the number of roots of unity  in the quadratic imaginary order of discriminant $d_i$.
	\end{theorem_nonum}
	
	Although it is not obvious from the definition, $F(m)$ is a non-negative power of a single prime.  More precisely, $F(m)$ is a non-trivial power of $\ell$ if $\ell$ is the unique prime such that $v_{\ell}(m)$ is odd and $\epsilon(\ell) = -1$, and $F(m) = 1$ otherwise.  This fact gives a particularly nice corollary, which can be thought of as saying that $J(d_1,d_2)$ is ``highly factorizable.''
	\begin{cor_nonum}{\cite[Cor. 1.6]{GZ-SingularModuli}}
		Let $d_1$ and $d_2$ be two relatively prime fundamental discriminants.  Assume that $\ell$ divides $J(d_1, d_2)$.  Then $\left(\frac{d_1}{\ell}\right), \left(\frac{d_2}{\ell}\right) \ne 1$ and $\ell$ divides a positive integer of the form $\frac{d_1d_2 - x^2}{4}$.
	\end{cor_nonum}

    {A factorization formula for $J(d_1,d_2)$, \emph{without} assumptions on $d_1$ and $d_2$, would have important applications to the computation of minimal polynomials of Igusa invariants of CM abelian surfaces.  In contrast to the case of genus $1$ curves, these minimal polynomials do not have integer coefficients, and the denominators of the coefficients cause difficulty in the computation of these polynomials.  As explained in~\cite{LV-Igusa}, the computation of the denominators can be reduced to counting the number of pairs of elements of a given norm and trace in certain maximal orders in a quaternion algebra.  As becomes apparent in the proof in~\cite{GZ-SingularModuli}, this counting problem is essentially equivalent to a factorization formula of $J(d_1,d_2)$, where $d_1$ and $d_2$ are the discriminants of the elements in question.

    The discriminants that arise in the study of Igusa invariants are \emph{not} necessarily relatively prime or fundamental.  In particular, the results and techniques of Gross and Zagier do not suffice for the computation of the aforementioned denominators.  This motivates the focus of this paper, which is to determine to what extent a factorization formula for $J(d_1,d_2)$ holds for \emph{arbitrary} discriminants.  }



    {Before stating our main results, we must first discuss which aspects of~\cite[Thm. 1.3]{GZ-SingularModuli} we hope to preserve.} We note that the proof of~\cite[Thm. 1.3]{GZ-SingularModuli} implicitly gives a mathematical interpretation of the quantities $F(m)$.  Their interpretation is quite natural, and any nice generalization of~\cite[Thm. 1.3]{GZ-SingularModuli} to arbitrary discriminants should retain this property. (This will be explained in more detail in~\S\ref{sec:ProofOfDefnOfFm}.)

	We prove that there is a generalization of $F(m)$ such that the {expression for $J(d_1, d_2)$ in the Theorem above } holds, and that retains this {mathematical interpretation} and many other nice properties of the original definition:
	\begin{theorem}\label{thm:DefnOfFm}
		Let $d_1, d_2$ be \emph{any} two distinct discriminants.  Then there exists a function $F$ that takes non-negative integers of the form $\frac{d_1d_2 - x^2}{4}$ to (possibly fractional) prime powers.  This function satisfies
			\begin{equation}\label{eq:main_formula}
				J(d_1, d_2)^{\frac8{w_1w_2}} = \pm
				\prod_{\substack{x^2 \leq d_1d_2\\x^2\equiv d_1d_2\bmod 4}} 		
					F\left(\frac{d_1d_2 - x^2}{4}\right).
			\end{equation}
Moreover, $F(m) = 1$ unless either $(1)$ $m = 0$ and $d_2 = d_1\ell^{2k}$ for some prime $\ell$ or $(2)$ the Hilbert symbol $(d_1, -m)_{\ell} = -1$ at a unique finite prime $\ell$ and this prime divides $m$.  In both of these cases $F(m)$ is a (possibly fractional) power of $\ell$.
	\end{theorem}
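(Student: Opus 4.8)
The plan is to generalize the arithmetic (``quaternionic'') half of the Gross--Zagier argument, computing $\log\bigl|J(d_1,d_2)^{\frac8{w_1w_2}}\bigr|$ one rational prime $\ell$ at a time and extracting $F$ from the resulting local contributions. Write $d_i = f_i^2 d_{K_i}$ and $\calO_{d_i} = \Z[\tau_i]$. Extending $v_\ell$ to $\Qbar$ and summing over the CM points, one establishes an identity $v_\ell\bigl(J(d_1,d_2)^{\frac8{w_1w_2}}\bigr) = \sum_{n\ge1} c_\ell(n)$, where $c_\ell(n)$ is a weighted count of pairs $(E_1,E_2)$ with $E_i$ having CM by $\calO_{d_i}$ whose reductions modulo some prime above $\ell$ agree to depth $n$ (the weights, and the factor $\tfrac8{w_1w_2}$, are the source of fractional exponents). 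By Deuring's theory a positive-depth congruence forces the common reduction to be supersingular, hence $\ell$ to be non-split in both $\Q(\sqrt{d_1})$ and $\Q(\sqrt{d_2})$, and the congruence is then measured inside the quaternion algebra $\BBl$ ramified exactly at $\ell$ and $\infty$: it corresponds to an embedding $\iota_1\colon\calO_{d_1}\into\calO$ into a maximal order $\calO$ of $\BBl$, together with an element of $\calO$ of reduced norm divisible by $\ell^n$ conjugating $\iota_1$ to an embedding $\iota_2\colon\calO_{d_2}\into\calO$. Gross and Zagier carried this out assuming $d_1,d_2$ coprime and fundamental; here one must instead embed the possibly non-maximal local orders $\Z_\ell[\tau_i]$ into the maximal order of the local division algebra $\BBl\otimes\Q_\ell$, and control how (optimal) embedding numbers behave for non-maximal orders and at primes dividing $\gcd(d_1,d_2)$.

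Next I would reorganize $\sum_\ell\bigl(\sum_{n\ge1}c_\ell(n)\bigr)\log\ell$ according to the integer $x$ recording the relative position of a pair $(\iota_1,\iota_2)$ --- concretely a normalized trace $\Tr\bigl(\iota_1(\tau_1)\overline{\iota_2(\tau_2)}\bigr)$. A direct computation shows that the component $\beta$ of $\iota_2(\tau_2)$ orthogonal, for the reduced-trace pairing, to $\Q\cdot\iota_1(\Q(\sqrt{d_1}))$ is pure with $\beta^2 = -m$ where $m := \frac{d_1d_2 - x^2}{4}$, so that $\BBl = \iota_1(\Q(\sqrt{d_1}))\oplus\iota_1(\Q(\sqrt{d_1}))\beta \cong \left(\tfrac{d_1,\,-m}{\Q}\right)$. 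Two consequences: since $\BBl$ is ramified at exactly $\ell$ and $\infty$ while $\left(\tfrac{d_1,-m}{\Q}\right)$ is automatically ramified at $\infty$ (as $d_1<0$ and $m>0$ whenever $x^2<d_1d_2$), the prime $\ell$ is forced to be the \emph{unique} finite prime with $(d_1,-m)_\ell=-1$; and the contribution indexed by a given $x$ depends on $x$ only through $m$. One then \emph{defines} $F(m):=\ell^{e(m)}$ with $e(m):=\sum_{n\ge1}c_\ell(n)$ the resulting local count, and $F(m):=1$ if no admissible $\ell$ exists. Summing over all $x$ with $x^2<d_1d_2$ and $x^2\equiv d_1d_2\bmod4$ and exponentiating then yields \eqref{eq:main_formula}; the sign, equivalently the archimedean contribution, is not determined by this method and is recorded separately.

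It remains to verify the two vanishing assertions. If $\left(\tfrac{d_1,-m}{\Q}\right)$ is split, or ramified at more than two places --- equivalently, if there is no finite prime, or more than one, at which $(d_1,-m)_\ell=-1$ --- then no $\BBl$ is isomorphic to it, the embedding set is empty for every $\ell$, and $F(m)=1$. If the finite ramified prime $\ell$ is unique but $\ell\nmid m$, then $\beta$ is a unit at $\ell$; one checks that no congruence of positive depth can then occur at $\ell$, so $e(m)=0$ and $F(m)=1$. In the remaining case $F(m)=\ell^{e(m)}$ is a power of a single prime, possibly fractional because of the denominator $w_1w_2$; the exponent $e(m)$ can be evaluated in closed form when $d_1$ is squarefree and only partially in general. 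The value $m=0$ --- which lies in the domain of $F$ precisely when $\Q(\sqrt{d_1})=\Q(\sqrt{d_2})$, and which does not occur in \eqref{eq:main_formula} since the inequality there is strict --- must be treated on its own: it corresponds to $\tau_1$ and $\tau_2$ being linked by an isogeny already over $\Qbar$, and a direct computation with the appropriate modular polynomial shows the associated contribution is supported at one prime exactly when $d_2=d_1\ell^{2k}$ and is trivial otherwise.

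I expect the main obstacle to be the local analysis at primes $\ell$ dividing the conductors $f_1,f_2$ or dividing $\gcd(d_1,d_2)$: this is exactly where Gross and Zagier's coprimality and maximality hypotheses were doing real work. Without them the local orders $\Z_\ell[\tau_i]$ are non-maximal in $\Q(\sqrt{d_i})\otimes\Q_\ell$, one must compute embedding numbers of non-maximal orders into the maximal order of the local division algebra (separating the inert and ramified sub-cases), and one must reconcile these counts with the extra ``conductor'' power of $\ell$ contributed by the $j$-invariant of a curve whose CM order is non-maximal at $\ell$. Producing a clean closed form for $e(m)$ out of this --- rather than a merely finite expression --- is what confines the fully explicit formula to the case $d_1$ squarefree; the existence statement and the two vanishing statements above, by contrast, require only the qualitative shape of the quaternionic description and go through in general.
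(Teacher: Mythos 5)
Your overall strategy coincides with the paper's: interpret $v_\ell$ of each difference of singular moduli as a count of congruences ($\Isom_{A/\mu^n}$), attach to each congruence the order generated inside the endomorphism ring of the reduction by the two embedded CM orders, identify the ambient algebra with $\left(\frac{d_1,\,-m}{\Q}\right)\isom\BBl$ so that $\ell$ is the unique finite prime with $(d_1,-m)_\ell=-1$ and $\ell\mid m$ (your ``$\beta$ is a unit at $\ell$'' step is the paper's remark that every order of $\BBl$ has reduced discriminant divisible by $\ell$), and define $F(m)$ by grouping the counts according to $m$; the sign and the fractional exponents are handled the same way. Up to bookkeeping, this is the proof in \S\ref{sec:ProofOfDefnOfFm}.

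The genuine gap is in the case $\Q(\sqrt{d_1})=\Q(\sqrt{d_2})$, which is exactly where assertion $(1)$ lives. Your first paragraph asserts that any positive-depth congruence forces supersingular reduction, hence $\ell$ non-split in both fields; this fails when the two fields coincide: $11$ divides $J(-7,-7\cdot 11^2)$ even though $11$ is split in $\Q(\sqrt{-7})$ (the paper's own remark), and such ordinary congruences are precisely the $m=0$ contributions, which your quaternionic bookkeeping does not see. You do set $m=0$ aside, but the statement you then need --- that a rank-two configuration (at an ordinary \emph{or} supersingular prime above $\ell$) can occur only when $d_2=d_1\ell^{2k}$ --- is the entire content of assertion $(1)$, and ``a direct computation with the appropriate modular polynomial'' does not supply it. The paper derives it from Proposition~\ref{prop:optimal_embedding}: the reduction map $\End(E)\hookrightarrow\End(\Ebar_0)$ is optimal at every prime $p\ne\ell$ (and optimal at $\ell$ if and only if the CM order is maximal at $\ell$), so both $\OO_{\widetilde{d}_1}$ and $\OO_{\widetilde{d}_2}$ sit optimally away from $\ell$ in the same rank-two ring, forcing $\widetilde{d}_1=\widetilde{d}_2$, i.e. $d_2=d_1\ell^{2k}$; the supersingular case of that optimality statement requires the Serre--Tate/Grothendieck-existence argument given there, not just Deuring's theory. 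Some such ingredient must be added to your outline. Relatedly, once ordinary congruences are admitted you should note that the $m=0$ factor genuinely belongs in the product (the paper's proof runs the product over $x^2\le d_1d_2$): in the example above the entire $11$-adic valuation of $J(-7,-7\cdot 11^2)$ comes from the $m=0$ term, so a product restricted to $x^2<d_1d_2$ with $F(0)$ merely ``recorded separately'' would not reproduce $J$ at that prime.
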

	\begin{remark}
		In most cases, $F(m)$ is actually an integer prime power.  In particular, it is an integer prime power if $m$ is coprime to the conductor of $d_1$ and at least one of $d_1$ or $d_2$ is odd.  We explain this more in Theorem~\ref{thm:main}.  However, fractional powers do appear.  For instance, if $d_1 = -3$ and $d_2 = -12$, then $F(0) = 2^{1/3}.$
	\end{remark}
	\noindent Theorem~\ref{thm:DefnOfFm} easily implies a generalization of~\cite[Cor 1.6]{GZ-SingularModuli} for \emph{all} pairs of discriminants.
	\begin{cor}\label{cor:HighlyDivisible}
		Let $d_1$ and $d_2$ be \emph{any} two distinct discriminants.  Assume that $\ell$ divides $J(d_1, d_2)$.  Then either $d_2 = d_1\ell^{2k}$ for some $k$, or  $\ell$ divides a positive integer $m$ of the form $\frac{d_1d_2 - x^2}{4}$ and the Hilbert symbol $\left(d_1, -m\right)_p = \left(d_2, -m\right)_p$ is nontrivial if and only if $p = \ell$.  In particular, if $d_2 \ne d_1\ell^{2k}$ then $\left(\frac{d_1}{\ell}\right), \left(\frac{d_2}{\ell}\right) \ne 1.$
	\end{cor}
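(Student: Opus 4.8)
The plan is to deduce Corollary~\ref{cor:HighlyDivisible} directly from Theorem~\ref{thm:DefnOfFm}, the only extra ingredient being a one-line manipulation of Hilbert symbols. Since $d_1 \ne d_2$, the integer $J(d_1,d_2)$ is nonzero, so the hypothesis $\ell \mid J(d_1,d_2)$ gives $v_\ell(J(d_1,d_2)) \ge 1$ and hence $v_\ell\bigl(J(d_1,d_2)^{8/(w_1w_2)}\bigr) > 0$. Taking $\ell$-adic valuations in~\eqref{eq:main_formula}, and using that each $F(m)$ is a non-negative (possibly fractional) prime power so that $v_\ell(F(m)) \ge 0$ for every $m$, I obtain an integer $x$ with $x^2 < d_1 d_2$, $x^2 \equiv d_1 d_2 \pmod{4}$, for which $F(m) \ne 1$ and $F(m)$ is a positive power of $\ell$, where $m := \frac{d_1 d_2 - x^2}{4}$. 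Applying the \emph{moreover} clause of Theorem~\ref{thm:DefnOfFm} and using that $F(m)$ is a power of our fixed prime $\ell$: either $m = 0$ and $d_2 = d_1 \ell^{2k}$ for some $k$ --- which is the first alternative of the corollary --- or else $m > 0$, $\ell \mid m$, and $\ell$ is the unique finite prime at which $(d_1, -m)_\ell = -1$.

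In the second case it remains to pass from $d_1$ to $d_2$ inside the Hilbert symbol. The defining identity $d_1 d_2 = x^2 + 4m$ exhibits $(X,Y,Z) = (1,2,x)$ as a nontrivial rational zero of the ternary form $d_1 d_2 X^2 - m Y^2 - Z^2$, so the conic $Z^2 = d_1 d_2 X^2 - m Y^2$ has a point over every completion of $\Q$; that is, $(d_1 d_2, -m)_p = 1$ for all places $p$. By bimultiplicativity of the Hilbert symbol, $(d_1, -m)_p\,(d_2, -m)_p = (d_1 d_2, -m)_p = 1$, whence $(d_1, -m)_p = (d_2, -m)_p$ for every $p$. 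Combined with the previous paragraph, this common value is $-1$ at exactly one finite prime, namely $\ell$, which divides $m$ --- precisely the second alternative.

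Finally, suppose $d_2 \ne d_1 \ell^{2k}$, so that we are in the second case and $(d_1, -m)_\ell = (d_2, -m)_\ell = -1$. If $\left(\frac{d_1}{\ell}\right) = 1$ then $d_1$ is a square in $\Q_\ell^\times$ --- for odd $\ell$ because then $\ell \nmid d_1$ and $d_1$ is a quadratic residue mod $\ell$, and for $\ell = 2$ because a discriminant with $\left(\frac{d_1}{2}\right) = 1$ satisfies $d_1 \equiv 1 \pmod{8}$ --- and then $(d_1, -m)_\ell = 1$, contradicting the above; hence $\left(\frac{d_1}{\ell}\right) \ne 1$, and symmetrically $\left(\frac{d_2}{\ell}\right) \ne 1$. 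I expect no genuine difficulty in any of this: all of the content sits in Theorem~\ref{thm:DefnOfFm}, and the only mild care needed is the bookkeeping at $\ell = 2$ and the (immediate) fact that the two cases of the moreover clause are mutually exclusive, since $(d_1,-m)_\ell$ is only defined when $m \ne 0$.
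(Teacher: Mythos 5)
Your proof is correct, and it follows the route the paper intends: the paper treats this corollary as an immediate consequence of Theorem~\ref{thm:DefnOfFm}, exactly as you do. The one place where you genuinely diverge is the step $\left(d_1,-m\right)_p=\left(d_2,-m\right)_p$ for all $p$, which does not appear in the statement of Theorem~\ref{thm:DefnOfFm}: the paper extracts it from inside the proof of that theorem, where the order $R$ generated by the two CM actions gives two presentations of the same quaternion algebra, $i^2=d_1,\ j^2=-m$ and $i^2=d_2,\ j^2=-m$, both ramified exactly at $\ell$ and $\infty$. You instead re-derive the symmetry externally from the identity $d_1d_2-4m=x^2$ (a rational point on $Z^2=d_1d_2X^2-mY^2$) together with bimultiplicativity, so your deduction uses only the theorem's statement rather than its proof; this is a clean and slightly more self-contained justification, while the paper's version comes for free once one has read \S\ref{sec:ProofOfDefnOfFm}. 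Your remaining steps --- nonnegativity of $v_\ell(F(m))$ (clear from the paper's definition of $F(m)$ as a weighted count of isomorphisms), the mutual exclusivity of the two cases of the ``moreover'' clause, and the $\ell$-adic square argument (including $d_1\equiv 1\bmod 8$ when $\ell=2$) giving $\left(\frac{d_1}{\ell}\right),\left(\frac{d_2}{\ell}\right)\ne 1$ --- are all sound and match what the paper leaves implicit.
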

	\begin{remark}
	 	The assumption that $d_2\neq d_1\ell^{2k}$ is, in fact, necessary to conclude that $\left(\frac{d_1}{\ell}\right), \left(\frac{d_2}{\ell}\right) \ne 1$.  For example, $11$ divides $J(-7, -7\cdot11^2)$ even though $11$ is split in $\Q(\sqrt{-7})$.
	\end{remark}
	
	From the proof of Theorem~\ref{thm:DefnOfFm}, we can show that there is \emph{no} extension of the definition of the quadratic character $\epsilon$ such that $F(m) = \prod_{n} n^{\epsilon(m/n)}$ for an arbitrary pair of discriminants (see~\S\ref{sec:ProofOfDefnOfFm}, Example~\ref{ex:NoExtensionOfEpsilon}).  We will instead generalize a different expression of $F(m)$ from~\cite{GZ-SingularModuli, Dorman-SpecialValues}.  If $-d_1$ is prime, $d_2$ is fundamental, and $\textup{gcd}(d_1,d_2) = 1$, then Gross and Zagier show that $v_{\ell}(F(m))$ can be expressed as a weighted sum of the number of integral ideals in $\OO_{d_1}$ of norm $m/\ell^r$ for $r>0$.  Dorman extended this work to the case that $d_1$ is squarefree, $d_2$ is fundamental, and $\textup{gcd}(d_1,d_2) = 1$.  We derive an expression for $v_{\ell}(F(m))$ which holds for \emph{arbitrary} $d_1$ and $d_2$ and for primes $\ell > 2$ which are coprime to the conductor of $d_1$.

	\begin{theorem}\label{thm:main}
		Let $d_1$ and $d_2$ be \emph{any} two distinct discriminants, and let $m$ be a non-negative integer of the form $\frac{d_1d_2 - x^2}{4}$ and $\ell$ a fixed prime that is coprime to $\cond(d_1)$.
		
		If $m > 0$ and either $\ell > 2$ or $2$ does not ramify in both $\Q(\sqrt{d_1})$ and $\Q(\sqrt{d_2})$, then $v_{\ell}(F(m))$ can be expressed as a weighted sum of the number of certain invertible integral ideals in $\OO_{d_1}$ of norm $m/\ell^r$ for $r >0$.
		
		Moreover, if $m$ is coprime to the conductor of $d_1$, then $v_{\ell}(F(m))$ is an integer and the weights are easily computed and constant; more precisely we have
		\begin{equation}\label{eq:ValuationOfFm}
			v_{\ell}(F(m)) =
				\begin{cases}
					\frac{1}{e}\rho(m)\sum_{r\ge1}\frakA(m/\ell^r) &
						\textup{if }\ell\nmid \cond(d_2),\\
					\rho(m)\frakA(m/\ell^{1 + v(\cond(d_2))}) &
						\textup{if }\ell| \cond(d_2),\\
				\end{cases}
		\end{equation}
		where $e$ is the ramification degree of $\ell$ in $\Q(\sqrt{d_1})$ and
		\begin{align*}
			\rho(m) = &
				\begin{cases}
					0 &\textup{if } (d_1, -m)_p = -1 \textup{ for }p|d_1, p \nmid f_1\ell,\\
					2^{\#\{p|(m, d_1) : p\nmid f_2
					\textup{ or } p= \ell\}} & \textup{otherwise},
				\end{cases}\\
			\frakA(N) = & \#\left\{
				\begin{array}{ll}
					& \Norm(\frakb) = N, \frakb \textup{ invertible},\\
					\frakb\subseteq\OO_{d_1} : & p\nmid\frakb
					\textup{ for all }p | (N, f_2), p\nmid \ell d_1\\
					& \frakp^3\nmid\frakb\textup{ for all }
					\frakp|p|(N, f_2, d_1), p\ne\ell
				\end{array}
				\right\}.
		\end{align*}

	If $m = 0$, then either $v_{\ell}(F(0)) = 0$ or $d_2 = d_1\ell^{2k}$ and
		\[
			v_{\ell}(F(0)) = \frac{2}{w_1}\cdot\#\Pic(\OO_{d_1}).
		\]
	\end{theorem}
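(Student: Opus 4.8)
The plan is to adapt the strategy of Gross--Zagier and Dorman in three stages: first express $v_{\ell}(F(m))$, via the factorization of Theorem~\ref{thm:DefnOfFm}, as a weighted count of pairs of CM elliptic curves $(E_1, E_2)$ with discriminants $d_1$ and $d_2$ whose reductions modulo a prime $\lambda\mid\ell$ coincide to a prescribed depth; then translate this into a count of simultaneous embeddings of the orders $\OO_{d_1}$ and $\OO_{d_2}$ into a quaternion order over $\Z$; and finally convert that embedding count into the ideal count $\frakA$. I would dispose of the case $m = 0$ at the outset: here $d_1 d_2$ is a perfect square, so $\Q(\sqrt{d_1}) = \Q(\sqrt{d_2}) =: K$, and (since $\ell\nmid\cond(d_1)$) the congruence $E_1\equiv E_2\pmod{\lambda^r}$ persists for unbounded $r$ exactly when $\OO_{d_1}$ and $\OO_{d_2}$ differ only at $\ell$, i.e.\ $d_2 = d_1\ell^{2k}$ with $k\ge 1$; in that subcase $|d_2|\ge 12$ so $w_2 = 2$, and a direct count of the (supersingular or ordinary) reductions mod $\lambda$ of the two $\Pic(\OO_{d_1})$-families, which coincide, yields $v_{\ell}(F(0)) = \tfrac{2}{w_1}\#\Pic(\OO_{d_1})$ after accounting for the normalization $\tfrac{8}{w_1w_2}$, of which only $\tfrac{2}{w_1}$ effectively remains; in every other case $F(0) = 1$.

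Now assume $m > 0$. First I would note that if $E_1$ and $E_2$ have isomorphic reduction modulo some $\lambda\mid\ell$, that reduction must be supersingular: otherwise both $\OO_{d_1}$ and $\OO_{d_2}$ would embed into the imaginary quadratic ring $\End(\overline E)$, forcing $\Q(\sqrt{d_1}) = \Q(\sqrt{d_2})$ and hence $m = 0$. Thus $\ell$ is non-split in both fields, and by Deuring's theorems $\OO := \End(\overline E)$ is a maximal order in $\BBl$ into which both $\OO_{d_1}$ and $\OO_{d_2}$ embed, with $\OO_{d_1}$ embedded optimally because $\ell\nmid\cond(d_1)$. If $\alpha_i$ is a traceless generator of the image of $\OO_{d_i}$, the two embeddings are ``linked'' by an integer $x$, recording $\tr(\alpha_1\overline{\alpha_2})$ suitably normalized, subject to $m = (d_1d_2 - x^2)/4$, which is the normalized discriminant of the rank-two lattice $\Z\alpha_1 + \Z\alpha_2$ in the definite quaternion algebra. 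Conversely, Deuring's lifting theorem---together with Gross's theory of quasi-canonical liftings, needed to pin down the depth of the congruence when $\ell$ ramifies in $\OO_{d_1}$ or $\ell\mid\cond(d_2)$---shows that every such datum $(\OO,\iota_1,\iota_2,x)$ arises from a pair $(E_1, E_2)$. So $v_{\ell}(F(m))$ becomes a weighted sum, over isomorphism classes of such $\OO$, of the number of pairs $(\iota_1,\iota_2)$ with the prescribed optimality and linking $x$, the congruence-depth stratification producing the sum over $r > 0$.

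The heart of the argument is the evaluation of this embedding count. Fixing the optimal embedding $\iota_1$, the conditions prescribing $\tr(\alpha_2)$ and $\tr(\alpha_1\overline{\alpha_2})$ determine the projection of $\alpha_2$ onto $\OO_{d_1}\otimes\Q$ (here one inverts the $2\times2$ trace-form matrix of $\{1,\alpha_1\}$, using $\ell\nmid\cond(d_1)$, with the factor $\tfrac1e$ absorbing the ramified case and the hypothesis ``$\ell>2$ or $2$ unramified in one field'' handling the residual $2$-adic obstruction), so counting $\iota_2$ reduces to counting vectors of a fixed norm in the orthogonal complement of $\OO_{d_1}\otimes\Q$ in $\OO$, a binary quadratic lattice whose determinant carries the factor $\ell$ or $\ell^2$ coming from the ramification of $\BBl$. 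As $(\OO,\iota_1)$ ranges over all isomorphism classes these complements run, with controlled multiplicity, through the invertible ideal classes of $\OO_{d_1}$ (the Deuring/Eichler correspondence); this is the step in which a sum of binary-form representation numbers $\sum_{[\frakb]\in\Pic(\OO_{d_1})} r_{\frakb}(N)$ collapses to a count of invertible integral ideals of $\OO_{d_1}$ of norm $N$, and the $\ell$-adic shape of the complement lattice forces $\ell\mid N$, producing the arguments $m/\ell^r$ with $r > 0$ (or the single argument $m/\ell^{1+v(\cond(d_2))}$ when $\ell\mid\cond(d_2)$, since then $\iota_1$ is confined to a unique Eichler order of that level). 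It remains to bookkeep the weights: the normalization $\tfrac{8}{w_1w_2}$ partly cancels against $|\OO^{\times}|$ and the $w_i$, the ramification degree $e$ of $\ell$ in $\OO_{d_1}$ contributes $\tfrac1e$, the local embedding numbers at primes $p\mid\gcd(m,d_1)$ combine into $\rho(m)$---which is $0$ exactly when the local lattice at some $p\mid d_1$ with $p\nmid f_1\ell$ is anisotropic, i.e.\ $(d_1,-m)_p = -1$, and is otherwise the stated power of $2$---and the conditions $p\nmid\frakb$ and $\frakp^3\nmid\frakb$ in the definition of $\frakA$ encode optimality of $\iota_2$ at primes dividing $\gcd(m,f_2)$. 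When $m$ is not coprime to $\cond(d_1)$ the relevant local embedding numbers at $p\mid\gcd(m,f_1)$ are not constant, which is why in that generality the theorem only asserts a weighted sum and not the closed form~\eqref{eq:ValuationOfFm}.

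\emph{The main obstacle} is precisely this local analysis at the ``bad'' primes---those dividing $f_1$, $f_2$, or $\gcd(d_1,d_2)$---for which neither the Gross--Zagier hypotheses (coprime, fundamental) nor Dorman's ($d_1$ squarefree) suffice. One must compute local, possibly non-optimal, embedding numbers of the non-maximal orders $\OO_{d_1,p}$ and $\OO_{d_2,p}$ into $\OO_p$, track how non-optimality of $\iota_2$ interacts with the depth-of-congruence bookkeeping on the one-dimensional deformation space of the supersingular reduction, and verify that Deuring lifting still realizes every curve carrying the prescribed non-optimal CM. It is exactly the failure of these local computations to behave uniformly at $p = 2$ and at $p\mid\cond(d_1)$ that forces the hypotheses $\ell > 2$ (or $2$ unramified in one of the fields) and $\ell\nmid\cond(d_1)$, and that restricts the clean closed form~\eqref{eq:ValuationOfFm} to $m$ coprime to $\cond(d_1)$.
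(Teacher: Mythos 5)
Your overall route is the same as the paper's (and as Gross--Zagier's and Dorman's): reduce $v_{\ell}(F(m))$ to counting isomorphisms modulo $\mu^n$, convert these into endomorphisms/embeddings into maximal orders of $\BBl$, and collapse the count to invertible ideals of $\OO_{d_1}$ of norm $m/\ell^r$. However, two of the steps at which the paper's real content lies are either missing or mis-attributed, and as written your sketch would not yield~\eqref{eq:ValuationOfFm}. The first is the passage from endomorphisms of prescribed trace and norm back to isomorphisms: one must decide, for each such $\phi$, whether $\phi$ or $\phi^{\vee}$ (or both) induces the prescribed element $\delta$ on $\Lie(E\bmod\mu^n)$, and up to which depth $n$ that congruence persists. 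This is Proposition~\ref{prop:LieToEndo}, a lengthy case analysis over the splitting of $\ell$ in the two fields, and it is exactly here --- not in any ``residual $2$-adic obstruction'' to inverting the trace form, which is a harmless computation over $\Q$ --- that the hypothesis ``$\ell>2$ or $2$ does not ramify in both fields'' is needed (the difficulty being the several ramified quadratic extensions of $\Q_2^{\unr}$), and it is from this comparison, combined with the ramification degree $e(\mu/\ell)$ of the field of definition, that the factors $\tfrac12$ and $\tfrac1e$ in~\eqref{eq:ValuationOfFm} arise. Your proposal gives no argument for this step; it appears only inside your ``main obstacle'' paragraph.

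The second gap is the collapse of the embedding count to $\rho(m)\,\frakA(m/\ell^r)$, which you assert via ``the Deuring/Eichler correspondence with controlled multiplicity.'' For non-maximal $\OO_{d_1}$ that correspondence and that multiplicity are precisely what must be constructed: the paper builds explicit maximal orders $R(\fraka,\lambda_{\fraka})$ with $\OO_{d_1}$ optimally embedded and the compatibility $R(\fraka)\frakb=\frakb R(\fraka\frakb)$ (Lemmas~\ref{lem:conj-inert} and~\ref{lem:conj-ram}) --- the point where Dorman's construction breaks for non-squarefree $d_1$, at the prime $2$, and in the ramified case --- and then computes the fiber over each ideal $\frakb$ using genus theory of non-maximal orders: the constraint $\frakb\sim\frakr\pmod{2\Pic(\OO_{d_1})}$, the isomorphism of Lemma~\ref{lem:norm1elts}, and the congruence count of Lemma~\ref{lem:fromidealtoelement}. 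That is where $\rho(m)$, its vanishing when $(d_1,-m)_p=-1$ at some $p\mid d_1$ with $p\nmid f_1\ell$, and the failure of the weights to be constant when $\gcd(m,f_1)>1$ actually come from; saying that ``local embedding numbers combine into $\rho(m)$'' assumes the conclusion. In particular, two phenomena your multiplicity claim must confront are that for non-maximal orders not every $2$-torsion class of $\Pic(\OO_{d_1})$ is represented by a ramified prime, and that there are extra invertible ideals above primes dividing $f_1$; these are exactly the failure points the paper isolates and repairs.
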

	Under the same assumptions as in Theorem~\ref{thm:main}, formula~\eqref{eq:ValuationOfFm} has an equivalent formulation as a product of local factors, see Proposition~\ref{prop:LocalFactors} in~\S\ref{sec:CountingEndomorphisms}.
	
	Theorems~\ref{thm:DefnOfFm} and~\ref{thm:main} combine to give a formula for $v_{\ell}(J(d_1,d_2))$ for any pair of discriminants $d_1\neq d_2$ and any prime $\ell>2$.  Under certain conditions this formula simplifies further:

	\begin{cor}
		Let $\ell$ be a prime and let $d_1, d_2$ be \emph{any} two distinct discriminants.
		Assume that either $d_1$ is squarefree or $\ell>2$ and for all $x\equiv d_1d_2\bmod 2$ with $x^2<d_1d_2$, we have either
		$$\gcd\left(\cond(d_1),\frac{d_1d_2 - x^2}{4}\right) = 1,
		\; \; \text{or} \; \;
			\left(d_1, \frac{x^2 - d_1d_2}{4}\right)_p = -1 \textup{ for some }p\ne \ell.$$
		Then we have
		\begin{equation}\label{eq:valJd1d2}
			v_{\ell}(J(d_1,d_2)^\frac{8}{w_1w_2}) = H +
			\sum_{\substack{x^2 < d_1d_2\\x^2\equiv d_1d_2\bmod{4}}}
			\begin{cases}
				\frac{1}{e}\rho(m_x)\sum_{r\ge1}\frakA(m_x/\ell^r) &
					\textup{if }\ell\nmid \cond(d_2),\\
				\rho(m_x)\frakA(m_x/\ell^{1 + v(\cond(d_2))}) &
					\textup{if }\ell| \cond(d_2),\\
			\end{cases}
		\end{equation}
		where $m_x := \frac{d_1d_2 - x^2}{4}$ and $H = 0$ unless $d_2 = d_1\ell^{2k}$ for some $k >0 $, in which case $H = \frac{2}{w_1}\cdot\#\Pic(\OO_{d_1})$.
		%
	\end{cor}
	We conjecture that Theorem~\ref{thm:main} holds even in the case that $\ell = 2$ ramifies in both $\Q(\sqrt{d_1})$ and $\Q(\sqrt{d_2})$.  However, the existence of multiple quadratic ramified extensions of $\Q_2^{\unr}$ causes difficulty in one of the steps of the proof, namely the proof of Proposition~\ref{prop:LieToEndo}.  It may be possible to get around this difficulty in our approach by a long and detailed case-by-case analysis.  We did not undertake this analysis, and it would be interesting to determine a better method.

	More generally, the local factor description of~\eqref{eq:ValuationOfFm} that is given in~\S\ref{sec:CountingEndomorphisms} suggests a conjecture for any pairs of discriminants whose conductors are relatively prime.
	\begin{conj}
		Let $d_1$ and $d_2$ be two distinct discriminants with relatively prime conductors.  Write $f$ for the product of the two conductors and for any prime $p$, let $d_{(p)}\in\{d_1,d_2\}$ be such that $p\nmid\cond(d_{(p)})$.  Then, for any prime $\ell$
		\[
			v_{\ell}(J(d_1,d_2)^2) = H +
			\sum_{\substack{x^2 < d_1d_2\\x^2\equiv d_1d_2\bmod{4}}}
			\epsilon_{\ell}(x)\prod_{p|m_x, p\ne\ell}
			\begin{cases}
				1 + v_p(m) & \left(\frac{d_{(p)}}{p}\right) = 1, p\nmid f,\\
				2 & \left(\frac{d_{(p)}}{p}\right) = 1, p| f, \textup{ or}\\
				& p|d_{(p)}, (d_{(p)}, -m)_p = 1, p\nmid f\\
				1 & \left(\frac{d_{(p)}}{p}\right) = -1, p\nmid f, v_p(m)\textup{ even} \textup{ or}\\
				& p|d_{(p)}, (d_{(p)}, -m)_p = 1, p |f , v_p(m) = 2\\
				0 & \textup{otherwise},
			\end{cases}
		\]
		where $H$, $m_x$ are as above and
		\[
			\epsilon_{\ell}(x) =
			\begin{cases}
				v_{\ell}(m_x) & \textup{if }\ell\nmid f, \ell|d_{\ell}\\
				\frac12(v_{\ell}(m_x) + 1), &
					\textup{if }\ell\nmid fd_{(p)}, v_{\ell}(m) \textup{ odd},\\
				0 & \textup{if }\ell\nmid d_{(p)}, v_{\ell}(m) \textup{ even},\\
				1 & \textup{otherwise}.
			\end{cases}
		\]
	\end{conj}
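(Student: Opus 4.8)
The plan is to extend the endomorphism-counting (``algebraic'') proof behind Theorem~\ref{thm:main} so that it treats the two orders $\OO_{d_1}$ and $\OO_{d_2}$ symmetrically at every rational prime, using crucially that their conductors are coprime. As in~\cite{GZ-SingularModuli}, one interprets $v_\ell(J(d_1,d_2)^2)$ at a prime $\lambda\mid\ell$ of a sufficiently large number field as a weighted count of isomorphisms (and small-degree isogenies) between the reductions modulo $\lambda$ of elliptic curves with CM by $\OO_{d_1}$ and by $\OO_{d_2}$; when $\ell$ is non-split in the imaginary field in question these reductions are supersingular, so by Deuring's theory the relevant endomorphism ring is an order in the quaternion algebra $B_{\ell,\infty}$. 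The intersection multiplicity at a point equals the length of a local deformation ring, which, following Gross--Zagier, is computed in terms of the number of ideals of that order --- equivalently, of optimal embeddings of $\OO_{d_1}$ and $\OO_{d_2}$ --- of prescribed norm. Grouping the supersingular points by the trace $x$ of the composite isogeny reorganizes the whole quantity into the outer sum over $x$ with $x^2 < d_1d_2$, with $m_x = (d_1d_2 - x^2)/4$ the norm that must be represented; this is the source of the outer sum in the conjecture.

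The second step is the local analysis away from $\ell$. For each prime $p \mid m_x$ with $p \ne \ell$, the algebra $B_{\ell,\infty}$ is split at $p$, so the local contribution is a count of optimal embeddings of a $\Z_p$-order into an Eichler order in $\Mat_2(\Z_p)$; the embedded order is $\OO_{d_{(p)}}\otimes\Z_p$, which is maximal precisely because $p\nmid\cond(d_{(p)})$, while the level of the receiving order is governed by $v_p(m_x)$ and by whether $p$ divides the conductor of the \emph{other} discriminant. Computing these local embedding numbers --- Hensel's lemma in the split case, a short case check when $p$ is ramified or divides the conductor --- should reproduce exactly the branching $1 + v_p(m),\ 2,\ 1,\ 0$ recorded in the conjecture, and the mutual independence of the local counts yields the product over $p\mid m_x$, $p\ne\ell$. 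The local-factor reformulation already established in Proposition~\ref{prop:LocalFactors} of~\S\ref{sec:CountingEndomorphisms} is essentially this statement under the hypotheses of Theorem~\ref{thm:main}; the new content is removing the assumption that $d_1$ is squarefree by phrasing everything in terms of $d_{(p)}$ rather than $\OO_{d_1}$.

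The third step is the analysis at $\ell$ itself, which produces the prefactor $\epsilon_\ell(x)$ and the degenerate term $H$. Now $B_{\ell,\infty}$ is ramified, the pertinent order is maximal, and one computes the length of the deformation ring as in~\cite{GZ-SingularModuli}: this yields the geometric-series-type weight $\epsilon_\ell(x)$, with its four cases controlled by whether $\ell$ divides the conductor of the field it is non-split in, by the parity of $v_\ell(m_x)$, and by whether $\ell$ splits in the other field (which kills the contribution). The case $m_x = 0$ is special: it occurs precisely when the two families of CM points become linked in characteristic $\ell$ by an $\ell$-power isogeny, which forces $d_2 = d_1\ell^{2k}$, and its contribution is a weighted count of the CM points of discriminant $d_1$, giving $H = \tfrac{2}{w_1}\cdot\#\Pic(\OO_{d_1})$ in accordance with the $m=0$ clause of Theorem~\ref{thm:main}.

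The main obstacle is exactly the one the authors flag: the prime $\ell = 2$ when it ramifies in both $\Q(\sqrt{d_1})$ and $\Q(\sqrt{d_2})$, which is where Proposition~\ref{prop:LieToEndo} currently needs a hypothesis. Over $\Q_2^{\unr}$ there are several ramified quadratic extensions, so the action of a prospective endomorphism on the tangent space (or on a short truncation of the formal group) no longer determines which quadratic order it generates, and the passage from a ``Lie-algebra embedding'' to a genuine embedding of orders breaks down. I expect the fix requires a finer invariant than the tangent-space action --- e.g. tracking the induced action on a length-two infinitesimal neighbourhood, or using Lubin--Tate parameters to separate the ramified quadratic extensions of $\Q_2^{\unr}$ --- followed by a case-by-case table indexed by the $2$-adic types of $d_1$ and $d_2$; the authors' remark that a cleaner, uniform argument would be desirable is precisely the interesting residual problem. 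Along the way one should pin down the normalizing powers ($w_1,w_2$ and the factor $2$ in $H$) and check consistency with Theorem~\ref{thm:main} on the overlap where $d_1$ is squarefree, both as a sanity check and as the quickest route to the constants in the remaining cases.
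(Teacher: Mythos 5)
First, note what the paper actually does with this statement: it is stated as a \emph{conjecture}, supported only by \texttt{Magma} verification for $|d_i|<250$; the paper contains no proof of it, so your proposal cannot be measured against an existing argument, only against what would be needed to close the conjecture. Read that way, your outline correctly identifies the general Gross--Zagier/Dorman framework (interpret $v_\ell$ as a count of isomorphisms of reductions, reduce to counting optimal embeddings/ideals of prescribed norm, organize by $x$ with $m_x=\frac{d_1d_2-x^2}{4}$, and treat $H$ via the $m_x=0$ degeneration), but it is a strategy sketch rather than a proof, and the two places where you wave your hands are exactly the places where the paper's machinery is known to break.

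The central gap is your second step. The paper's computation of $\sum_{\tau_1}\#S_{n,m}$ is not a purely local embedding count: it is a \emph{global} count of invertible ideals $\frakb\subseteq\OO_{d_1}$ of norm $m\ell^{-r}$, weighted by the number of $([\frakc],\omega)\in\Pic(\OO_{d_1})[2]\times\OO_{d_1}^\times$ satisfying the congruence condition (1) and the index condition (2), and when $\gcd(m_x,f_1)>1$ the paper can only bound these weights between two powers of $2$ (Lemma~\ref{lem:fromidealtoelement} fails to produce the element $c$ in that case, as the authors remark). The conjecture's closed formula in precisely those cases is obtained by switching to $d_{(p)}$ prime by prime, but the paper's construction fixes one order $\OO_{d_1}$ globally (the curves $E(\tau_1)$, the orders $R(\fraka,\lambda_\fraka)$, the ideal $\frakb$); you cannot simply ``use $\OO_{d_2}$ at the primes dividing $f_1$'' without a new argument showing that the global weighted sum factors into local embedding numbers computed with whichever order is maximal at each $p$ --- e.g.\ a genuinely local/adelic computation of pairs of optimal embeddings with prescribed relative position, plus a gluing statement. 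Your phrase ``should reproduce exactly the branching'' is the conjecture, not a proof of it. (Also, at $p\ne\ell$ the receiving order is a maximal order in $\BBl$, not an Eichler order of level tied to $v_p(m_x)$; what varies is the relative position of the two embedded quadratic orders, so the local count you need is of Gross--Keating type rather than a straightforward Eichler embedding number.) The second acknowledged gap, $\ell=2$ ramified in both fields (Proposition~\ref{prop:LieToEndo}), you flag but do not resolve, and the weight $\epsilon_\ell(x)$ is not just a deformation-length geometric series: in the paper it comes from the combination of $\sum_r\frakA(m/\ell^r)$, the factor $1/e$, and the Lie-action analysis, so the $\ell$-part of your step three also still needs the detailed case analysis. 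In short: plausible roadmap, consistent with Theorem~\ref{thm:main} and Proposition~\ref{prop:LocalFactors} on their common domain of validity, but the two essential new ingredients the conjecture requires are named rather than supplied.
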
	
	\noindent We verified this conjecture with \texttt{Magma} for all pairs of discriminants with relatively prime conductors and $|d_i| < 250$.

	\subsection{Related previous work}
		In 1989, Kaneko\cite{Kaneko} generalized part of~\cite[Cor. 1.6]{GZ-SingularModuli} to arbitrary discriminants.  More precisely, he proved that if $d_1$ and $d_2$ are arbitrary discriminants and $\ell$ is a prime dividing $J(d_1,d_2)$, then $\ell$ divides a positive integer of the form $\frac14(d_1d_2 - x^2).$  However, Kaneko did not obtain the stronger statement given in Corollary~\ref{cor:HighlyDivisible}.
		
		To the best of our knowledge, the only previous generalization of Theorem~\cite[Thm. 1.3]{GZ-SingularModuli}, conjectural or otherwise, was given in 1998 by Hutchinson.  Hutchinson put forth conjectural extensions of the Gross-Zagier formula~\cite{Hutchinson} in the case that the gcd of $d_1$ and $d_2$ is supported at a single prime $p$ that does not divide either conductor.  While his formulations are very different from ours, we checked that the two formulas agree in many cases.
		
		One of the contributions of the present paper is a generalization of Dorman's theory of maximal orders in a quaternion algebra with an optimal embedding of a maximal imaginary quadratic order~\cite{Dorman-Orders}.  We generalize this theory to include imaginary quadratic orders which are \emph{not} maximal.  The first author, together with Goren, generalized Dorman's work in a different direction, to higher-dimensional abelian varieties, by giving a description of certain orders in a quaternion algebra over a totally real field, with an optimal embedding of the maximal order of a CM number field~\cite{GL}.  Since that work does not apply to optimal embeddings of {non-maximal} orders, it is neither weaker nor stronger than the generalization we give in this paper.

	\subsection{Applications}
		The results and techniques from Gross and Zagier's paper have had a number of applications over the years.  For instance,~\cite[Cor. 1.6]{GZ-SingularModuli} gives simple conditions which ensure that certain values of the $j$ function are relatively prime to all sufficiently large primes.  This has been used to bound the number of rational points on certain modular curves~\cite[Thm. 6.2]{Parent} and to determine which twist of an elliptic curve to use for the CM method~\cite[pp.554-555]{RubinSilverberg}. In a different direction, the techniques of the paper have applications to CM liftings of supersingular elliptic curves.  More precisely, work of Elkies~\cite{Elkies} combined with~\cite{GZ-SingularModuli} shows that liftings of supersingular elliptic curves over $\Fbar_\ell$ to elliptic curves with CM by a maximal order $\OO$ are in bijection with embeddings of $\OO$ into maximal orders of $\BBl$, the quaternion algebra ramified at $\ell$ and $\infty$.  Furthermore, the work of Gross and Zagier together with work of Dorman~\cite{Dorman-Orders} gives a count for the number of such embeddings.
		
		We expect the results in this paper to lead to similar applications.  Indeed, Corollary~\ref{cor:HighlyDivisible} has already been used in work of Dose, Green, Griffin, Mao, Rolen, and Willis to study integrality properties of values at CM points of a certain non-holomorphic modular function~\cite{AWS}.

        In a different direction, as mentioned above, new applications in arithmetic intersection theory are given by the present authors in~\cite{LV-Igusa}, where the results on counting embeddings of non-maximal orders presented here in Sections \ref{sec:QuaternionOrders} and~\ref{sec:CountingEndomorphisms} are used to give formulas for the denominators of Igusa class polynomials.  We note that in both of these examples the results and techniques of Gross and Zagier and Dorman are not sufficient; results on non-fundamental discriminants and pairs of discriminants with a common factor are needed.

	\subsection{Outline}
		We prove Theorem~\ref{thm:DefnOfFm} in \S\ref{sec:ProofOfDefnOfFm}.  The rest of the paper will focus on the proof of Theorem~\ref{thm:main}.  In~\S\ref{subsec:HighLevelStrategy}, we give a high-level overview of the whole proof and explain the differences between the general case and the cases treated in~\cite{GZ-SingularModuli, Dorman-SpecialValues}.  In~\S\ref{subsec:DetailedOutline} we explain how various propositions and theorems come together to prove Theorem~\ref{thm:main}, and point the reader to the individual sections where each proposition or theorem is proved.

	\subsection{Notation}
		Throughout, $\ell$ will denote a fixed prime. By \defi{discriminant} we mean a discriminant of a quadratic imaginary order.  We say a discriminant is \defi{fundamental at a prime $p$} if the associated quadratic imaginary order is maximal at $p$, and we say a discriminant is \defi{fundamental} if it is fundamental at all primes $p$.
		
		For a discriminant $d$, we write $f$ for the conductor of $d$, and let $\widetilde{d}$ denote $d\ell^{-2v_{\ell}(f)}$.  Note that $\widetilde{d}$ is fundamental at $\ell$.  For most of the paper, we will concern ourselves with two fixed distinct discriminants $d_1$, $d_2$; in this case, the above quantities will be denoted $f_1, f_2$ and $\widetilde{d_1}, \widetilde{d_2}$ respectively.  We write $s_i := v_{\ell}(f_i).$

		We denote quadratic imaginary orders by $\OO$ and write $\OO_d$ for the quadratic imaginary order of discriminant $d$.  We set $w_d := \#\OO_d^\times$ and $\widetilde{w}_d := \#\OO_{\widetilde{d}}^\times$. The notation $w_i$, $\widetilde{w}_i$ will refer to the special case $d = d_i$.  If $\ell$ is ramified in $\OO$, then $\frakl$ will denote the unique prime ideal in $\OO$ lying above $\ell$.  We write $\frakD$ for the principal ideal generated by $\sqrt{d}$.  In~\S\ref{sec:Background}, we will give background on quadratic imaginary orders, and fix some more notation there.
		
		Let $H_d$ denote the ring class field of $\OO_d$, and let $\widetilde{H}_{d_i} := H_{\widetilde{d_i}}$.  Let $\mu_i$ denote a prime of $\OO_{H_{d_i}}$ lying over $\ell$.  When $\ell\nmid f_1$, we let $\WW$ denote $\OO_{H_{d_1},\mu_1}$ and let $\pi$ denote a uniformizer of $\WW$.

\section*{Acknowledgements}
	We thank Drew Sutherland for pointing out the work of Hutchinson.
The second author would like to thank Benedict Gross, Benjamin Howard, Bjorn Poonen, Joseph Rabinoff, and Michael Rosen for helpful conversations and Jonathan Lubin for a discussion regarding part of the proof of  Proposition~\ref{prop:IsomToLie}.  We are also grateful to {Brian Conrad}, Bjorn Poonen, and {the anonymous referees} for comments improving the exposition.

\section{Proof of Theorem~\ref{thm:DefnOfFm}}\label{sec:ProofOfDefnOfFm}

	By~\cite[\S\S5,6]{SerreTate}, there exists a number field $K$ such that, for every prime $q$ and every $[\tau]$ of fixed discriminant $d$, there exists an elliptic curve $E/\OO_K$ with good reduction at all $\qq|q$ such that $j(E) = j(\tau)$.  More specifically, we may take $K$ to be the ring class field of $\OO_d$, unless $d = -3p^{2k}$ or $-4p^{2k}$ for some prime $p$ and positive integer $k$.  In that case, we may assume that $K$ is a finite extension of the ring class field of $\OO_d$ ramified at $p$.
	
	For each $i$, let $L_i$ be the ring class field of $\OO_{d_i}$, if $d_i \ne-3p^{2k}, -4p^{2k}$ for some prime $p$.  If $d_i = -3p^{2k}$ or $-4p^{2k}$, let $L_i$ be the minimal (finite) Galois extension of the ring class field of $\OO_{d_i}$ such that the above properties hold.  Let $L$ be the compositum of $L_1$ and $L_2$ and let $\OO_L$ be the ring of integers.  Then, by the discussion above, for every $[\tau_i]$ of discriminant $d_i$ and every prime $q$, there exists an elliptic curve $E/\OO_L$ with good reduction at all $\qq|q$ such that $j(E) = j(\tau_i)$; we call let $E(\tau_i)$ denote such an elliptic curve.
	
	Fix a rational prime $\ell$ and a prime $\mu$ of $\OO_L$ lying over $\ell$.  Let $A$ be the ring of integers of $L_{\mu}^{\textup{unr}}$.  Then, by~\cite[Prop 2.3]{GZ-SingularModuli}, we have
\[
	v_{\mu}(j(\tau_1) - j(\tau_2)) =
	\frac12\sum_{n}\#\Isom_{A/\mu^n}(E(\tau_1), E(\tau_2))
\]
for all $[\tau_i]$ of discriminant $d_i$.  Write $E_i$ for $E(\tau_i)$.  Since $\tau_i$ is an algebraic number of discriminant $d_i$, we have an isomorphism $\OO_{d_i}\isom\End(E_i)$, and an embedding
\[
	\OO_{d_i}\isom\End(E_i)\hookrightarrow\End_{A/\mu}(E_i).
\]
For any $g\in \Isom_{A/\mu}(E_1, E_2)$ we obtain an isomorphism $\End_{A/\mu}(E_2)\isom\End_{A/\mu}(E_1)$ by conjugating by $g$.  Thus we have an embedding (that depends on the choice of $g$) of $\OO_{d_2}\hookrightarrow\End_{A/\mu}(E_1)$.

\begin{defn}
	Let $\iota\colon\OO\hookrightarrow R$ be a map of $\Z$-modules.  We say this map is \defi{optimal at $p$} if
	\[
		\left(\iota(\OO)\otimes\Q_p\right)\cap R = \iota(\OO),
	\]
	where the intersection takes place inside of $R\otimes\Q_p$
\end{defn}
\begin{prop}\label{prop:optimal_embedding}
	Let $E$ be an elliptic curve over $A$ that has good reduction and that has CM by an order $\OO$.  Write $\Ebar_0$ for the reduction of $E$.  Then the embedding
	\[
		\End(E) \hookrightarrow \End(\Ebar_0)
	\]
	is optimal at all primes $p\ne\ell$.  It is optimal at $\ell$ if and only if $\OO$ is maximal at $\ell$.
\end{prop}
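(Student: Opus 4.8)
The plan is to analyze the cokernel of the reduction map $\End(E)\hookrightarrow\End(\Ebar_0)$ prime by prime, using the fact that both sides are orders in the same imaginary quadratic field $\Frac(\OO)$ (since $E$ has ordinary, in fact CM, reduction, $\End(\Ebar_0)$ is again an order in $\Frac(\OO)$, not a maximal order in a quaternion algebra — this is the ordinary reduction case, which is forced by the CM hypothesis once we reduce at a prime $\ell$ that splits or ramifies appropriately; if $\ell$ were inert the reduction would be supersingular, but the statement is really about the embedding of $\End(E)$ into whatever $\End(\Ebar_0)$ is). Concretely, writing $\OO=\End(E)$ and $\OO'=\End(\Ebar_0)$, both contained in $K:=\Frac(\OO)$, optimality of $\OO\hookrightarrow\OO'$ at $p$ is equivalent to $\OO\otimes\Z_p = \OO'\otimes\Z_p$, i.e.\ to $p\nmid[\OO':\OO]$. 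So the content of the proposition is: $[\OO':\OO]$ is a power of $\ell$, and it is nontrivial exactly when $\OO$ is non-maximal at $\ell$.

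First I would handle the primes $p\neq\ell$. Here I would use that $E$ has good reduction at $\mu$ and that reduction induces an isomorphism on prime-to-$\ell$ torsion: $E[p^n](\bar{A})\xrightarrow{\sim}\Ebar_0[p^n](\overline{A/\mu})$ for all $n$, compatibly with the Galois/endomorphism action. Any $\phi\in\OO'=\End(\Ebar_0)$ acts on $\varprojlim\Ebar_0[p^n]=T_p\Ebar_0\cong T_pE$, and this action commutes with the $\OO$-module structure; since $\OO\otimes\Z_p$ is its own commutant in $\End(T_pE)\otimes\Q_p$ on the one hand, and $\phi$ lies in the Tate module endomorphisms coming from lifting (by the theory of canonical lifts / Deuring, the endomorphism ring of the reduction acts on the prime-to-$\ell$ Tate module exactly through $\OO\otimes\Z_p$ because the lift is determined there), one concludes $\phi\otimes 1\in\OO\otimes\Z_p$. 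Hence $\OO'\otimes\Z_p=\OO\otimes\Z_p$, which is optimality at $p$. The cleanest phrasing: the natural map $\OO\otimes\Z_p\to\End(T_pE)=\End(T_p\Ebar_0)$ and $\OO'\otimes\Z_p\to\End(T_p\Ebar_0)$ have the same image because a prime-to-$\ell$ isogeny of $\Ebar_0$ lifts (uniquely) to one of $E$.

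For the prime $\ell$ itself, I would argue both directions. If $\OO$ is maximal at $\ell$, then $\OO\otimes\Z_\ell$ is the maximal order of $K\otimes\Q_\ell$, so it cannot be properly contained in the order $\OO'\otimes\Z_\ell$; hence equality, i.e.\ optimality at $\ell$. Conversely, if $\OO$ is non-maximal at $\ell$, I expect $\End(\Ebar_0)$ to be strictly larger at $\ell$: the standard mechanism is that in the ordinary reduction, the formal group / $\ell$-divisible group picks up extra endomorphisms — more precisely, Deuring's lifting theorem says $\Ebar_0$ lifts to a curve with CM by the $\ell$-maximal order $\OO_{\widetilde d_1}$ (the order with the $\ell$-part of the conductor removed), and since $j(E)=j(\tau_1)$ has a model with CM by $\OO$ over $A$ while the canonical lift has CM by something $\ell$-maximal, comparing the two shows $[\End(\Ebar_0):\End(E)]$ is divisible by $\ell$. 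Here one has to be a little careful: a priori $\End(\Ebar_0)\otimes\Z_\ell$ is the $\ell$-maximal order $\OO_{\widetilde d_1}\otimes\Z_\ell$, and at the prime-to-$\ell$ part it is $\OO\otimes\Z[1/\ell]\text{-local}$, so globally $\End(\Ebar_0)=\OO_{\widetilde d_1}\supsetneq\OO$, giving non-optimality.

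The main obstacle, and the step I would spend the most care on, is the converse direction at $\ell$ in the non-maximal case — showing the endomorphism ring really does grow, and identifying it precisely as the $\ell$-maximal order $\OO_{\widetilde d_1}$. This is where one needs the Serre–Tate / Deuring theory of CM reduction in the ordinary case and must correctly locate the model: one must invoke that $E$ as constructed over $A$ genuinely has $\End(E)=\OO$ (not something larger) while its reduction acquires the canonical-lift endomorphisms, and rule out the supersingular (inert $\ell$) possibility, which would change $\End(\Ebar_0)$ into an order in a quaternion algebra and make the statement false as literally phrased. Assuming, as the surrounding setup does, that we are in a situation where this does not occur (or interpreting $\End(\Ebar_0)$ appropriately), the remaining bookkeeping — that the cokernel is exactly killed by the $\ell$-part of the conductor — is a short computation with the conductor–discriminant relation $d_1=\widetilde{d_1}\ell^{2s_1}$.
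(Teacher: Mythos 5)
Your proposal only proves the proposition in the ordinary-reduction case, and that is precisely the case the paper disposes of with a citation (Lang, \emph{Elliptic Functions}, \S 13). The residue characteristic is $\ell$, and the reduction $\Ebar_0$ is supersingular whenever $\ell$ is inert or ramified in $\OO$; this is not a degenerate possibility to be ruled out but the main case, and the one actually used later in the paper (all of \S 5--6 concerns supersingular endomorphism rings). Your claim that the statement would be ``false as literally phrased'' when $\End(\Ebar_0)$ is a quaternion order is incorrect: the definition of optimality given just before the proposition, $\left(\iota(\OO)\otimes\Q_p\right)\cap R=\iota(\OO)$ inside $R\otimes\Q_p$, is designed exactly for an embedding of a rank-$2$ order into a rank-$4$ order. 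Consequently your reduction of optimality at $p$ to $p\nmid[\OO':\OO]$, with $\OO'=\End(\Ebar_0)$ an order in $\Frac(\OO)$, does not apply; and the specific mechanisms you invoke for $p\ne\ell$ break down in the supersingular case: endomorphisms of $\Ebar_0$ need not commute with the reduced CM action on $T_p\Ebar_0$ (the commutant argument fails because $\End(\Ebar_0)\otimes\Q_p$ is noncommutative), and prime-to-$\ell$ endomorphisms of $\Ebar_0$ do not in general lift to $E$ at all --- indeed the whole point is that $\End(\Ebar_0)$ is much larger than $\End(E)$.

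What is missing is the argument the paper actually gives in the supersingular case. At $\ell$: the local quaternion algebra $\End(\Ebar_0)\otimes\Q_\ell$ is division, so it has a unique maximal order consisting of \emph{all} integral elements; hence $\left(\End(E)\otimes\Q_\ell\right)\cap\End(\Ebar_0)$ is automatically the maximal order of $\OO\otimes\Q_\ell$, and optimality at $\ell$ holds if and only if $\OO$ is already maximal at $\ell$. Away from $\ell$: one uses the Grothendieck existence theorem, $\End(E)\xrightarrow{\;\sim\;}\varprojlim\End(\Ebar_n)$, together with Serre--Tate (deformations are controlled by the $\ell$-divisible group $\Gamma_n$, and $\End(\Gamma_n)$ is a $\Z_\ell$-module), to show that any $\psi\in\End(\Ebar_0)\cap\Q(\phi)$ written as $\frac{1}{f\ell^k}(a+b\phi)$ with $\ell\nmid f$ satisfies $\ell^k\psi\in\End(\Ebar_n)\otimes\Z[1/\ell]$ for all $n$, i.e.\ the failure of $\psi$ to come from $\End(E)$ is supported only at $\ell$. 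None of this deformation-theoretic input appears in your proposal, so the part of the proposition that the paper needs remains unproved.
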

\begin{proof}
	If $E$ has ordinary reduction, then this is a well-known result, see, for example~\cite[\S13, Thm. 12]{Lang-EllipticFunctions}.  Assume that $E$ has supersingular reduction.  We write $\Ebar_n$ for $E\bmod\mu^{n+1}$ and let $\phi \in\End(\Ebar_0)$ be the image of a generator of $\OO$.  By~\cite[Chap. II, Lemma 1.5]{Vigneras} there is a unique  maximal order in $\End(\Ebar_0)\otimes\Q_{\ell}$ which consists of all integral elements.  Therefore, the order $(\End(E)\otimes\Q)\cap\End(\Ebar_0)$ must be maximal at $\ell$, and so the embedding $\End(E)\hookrightarrow \End(\Ebar_0)$ is optimal at $\ell$ if and only if $\End(E) = \OO$ is maximal at $\ell$.
	
	By the Grothendieck existence theorem~\cite[Thm 3.4]{Conrad-GZ}, we have that
	\[
		\End(E) \to \varprojlim \End(\Ebar_n)
	\]
	is a bijection.  So to complete the proof, it suffices to show that any element of $\End(E)\cap \Q(\phi)$ lifts to an element of $ \End(\Ebar_n)\otimes\Z[1/\ell]$ for all $n\geq 0$.  Let $\psi\in\End(\Ebar_0)\cap\Q(\phi)$.  We may write $\psi := \frac{1}{f\ell^k}\left(a + b\phi\right)$ for some $a,b,f\in\Z$, $k\in \Z_{\geq0}$, where $\ell\nmid f$.  Since $\phi \in \im(\End(E)\to\End(\Ebar_0))$, both $\phi$ and $a + b\phi$ are endomorphisms of $\Ebar_n$, and of $\Gamma_n$, the $\ell$-divisible group of $E_n$.  The endomorphism ring $\End(\Gamma_n)$ is a $\Z_{\ell}$-module, so $\ell^k\psi = \frac1f\left(a + b\phi\right)$ is in $\End(\Gamma_n)$.  Thus, by Serre-Tate~\cite[Thm 3.3]{Conrad-GZ}, $\psi \in \End(\Ebar_n)\otimes\Z[1/\ell].$  This completes the proof.
\end{proof}

	This proposition shows that $\ell|J(d_1,d_2)$ only if there exists a $\tau_1$ such that there is an optimal embedding $\OO_{\widetilde{d}_2}\hookrightarrow\End_{A/\mu}(E(\tau_1))$.  Given such an optimal embedding, we can consider the sub-order $R$ of $\End_{A/\mu}(E_1)$ that is generated by the images of $\OO_{d_1}$ and $\OO_{d_2}$.  A calculation shows that the discriminant of $R$ is $\left(\frac{d_1d_2 - x^2}{4}\right)^2$ for some $x\in\Z$ with $x^2\leq d_1d_2$ and $d_1d_2\equiv x^2\bmod 4$.

	For any non-negative integer $m$ of the form $\frac{d_1d_2 - x^2}{4}$, we define $F(m)$ to be the unique ideal in $\OO_L$ such that for all rational primes $\ell$ and all primes $\mu|\ell$ in $\OO_L$
\[
	v_{\mu}(F(m)) = \frac4{Cw_1w_2}
	\sum_{\substack{[\tau_i]\\\textup{disc}(\tau_i) = d_i}}
	\sum_{n\geq 1} \#\left\{f\in\Isom_{A/\mu^n}(E(\tau_1), E(\tau_2)):
	\textup{disc}(R) = m^2\right\},
\]
where $R$ is the suborder of $\End_{A/\mu}(E(\tau_1))$ described above and $C = 1$ if $4m = d_1d_2$ and $C = 2$ otherwise.  (The presence of this $C$ scalar is to agree with the convention set in~\cite{GZ-SingularModuli}.  Since Gross and Zagier take the product over $x$-values, for every $m \neq d_1d_2/4$ the value $F(m)$ appears twice in the product, once from $x$ and once from $-x$.)  From this definition and the previous discussion it is clear that
\[
	J(d_1,d_2)^2 = \pm
	\prod_{\substack{x^2\leq d_1d_2\\x^2\equiv d_1d_2\bmod 4}}
	F\left(\frac{d_1d_2 - x^2}{4}\right).
\]
In addition, the condition defining $F(m)$ is Galois invariant, so $v_{\mu}(F(m)) = v_{\mu'}(F(m))$ for any $\mu,\mu'$ lying over the same prime $\ell$.

	Assume that $F(m)$ is non-trivial.  So there exists a prime $\ell$ and an elliptic curve $\overline{E}/\Fbar_{\ell}$ with embeddings $\iota_i\colon\OO_{d_i}\hookrightarrow \End(E)$ such that the images of $\iota_1$ and $\iota_2$ generate an order $R$ of discriminant $m^2$.  The endomorphism ring $\End(\overline{E})$ is either an order in a quadratic imaginary field, or a maximal order in the quaternion algebra $\BBl$ ramified only at $\ell$ and $\infty$~\cite[Chap. 13 \S2]{Lang-EllipticFunctions}.  Since a maximal order in a quaternion algebra has no suborders of rank $3$, if $m = 0$, then $R$ must be a rank $2$ $\Z$-module.  Since $\OO_{\widetilde{d}_i}$ is optimally embedded in $\End_{A/\mu}(E(\tau_1))$ this implies that $\widetilde{d}_1 = \widetilde{d}_2$, i.e. that $d_2 = d_1\ell^{2k}$ for some $k\in\Z_{\neq 0}$.

	Now consider the case when $m$ is nonzero.  Then $R$ is rank $4$, and thus $R\otimes_{\Z}{\Q}$ is $\BBl$, the quaternion algebra ramified only at $\ell$ and $\infty.$  A straightforward calculation, which does not depend on $\ell$, shows that
\[
	R\otimes_{\Z}{\Q} \isom
	\frac{\Q\langle i, j\rangle}{i^2 = d_1, j^2 = -m, ij = -ji }
	\isom\frac{\Q\langle i, j\rangle}{i^2 = d_2, j^2 = -m, ij = -ji }.
\]
Since this quaternion algebra is ramified only at $\ell$ and $\infty$, this implies that the Hilbert symbol $(d_1,-m)_p = (d_2, -m)_p = 1$ if and only if $p\neq \ell$.
 In addition, $\ell$ divides the reduced discriminant of any order in $\BBl$, so $\ell|m$.  Since $F(m)$ is a Galois invariant fixed ideal supported only at primes lying over $\ell$, we may consider $F(m)$ to be just a fractional power of $\ell$.  This completes the proof of Theorem~\ref{thm:DefnOfFm}.  \qed

\begin{example}\label{ex:NoExtensionOfEpsilon}
	Assume that there is a quadratic character $\epsilon$ that is defined on every prime $p$ that divides a positive integer of the form $\frac14(d_1d_2 - x^2)$.  In addition, we assume that if $p\nmid\textup{gcd}(d_1,d_2)$ and if $p\nmid f_1f_2$, then $\epsilon(p)$ agrees with the definition in~\cite[Thm. 1.3]{GZ-SingularModuli}, and that
	\begin{equation}\label{eq:ProductFormulaForFm}
		F\left(m\right) =
		\prod_{n|m, n>0} n^{\epsilon(m/n)}, \quad\textup{where } m = \frac{d_1d_2 - x^2}{4}.
	\end{equation}

	Consider the following example. Let $d_1 = -3\cdot7\cdot11$ and $d_2 = 5d_1$.  We will study the cases where $x^2 = 33^2$ and $99^2$, i.e. when $m = 3^2 11^2 61,$ and $3^2 11^2 59$ respectively.  Equation~\eqref{eq:ProductFormulaForFm} shows that
	\begin{align*}
		v_3(F(3^2 11^2 61)) = v_3(F(3^2 11^2 59)) &	
		= (4 + 2\epsilon(3))(1 + \epsilon(11) + \epsilon(11)^2),\\
		v_{11}(F(3^2 11^2 61)) = v_{11}(F(3^2 11^2 59)) &	
		= (4 + 2\epsilon(11))(1 + \epsilon(3) + \epsilon(3)^2),\\
		v_{61}(F(3^2 11^2 61)) = v_{59}(F(3^2 11^2 59)) &	
		= (1 + \epsilon(3) + \epsilon(3)^2)(1 + \epsilon(11) + \epsilon(11)^2).
	\end{align*}
	
	On the other hand, we can also calculate $F(m)$ using the results in the proof of Theorem~\ref{thm:DefnOfFm}.  Theorem~\ref{thm:DefnOfFm} implies that $F(3^2 11^2 61)$ is supported only at $3$ and that $F(3^211^259)$ is supported only at $11$.  Moreover, the proof gives a method of determining whether $F(m)$ is a \emph{nontrivial} power of $3$, or $11$ respectively, and the calculation shows that this is indeed the case.  But the above expressions show that either $3$ divides both $F(3^2 11^2 61)$ and $F(3^211^259)$, or it divides neither, which gives a contradiction.
\end{example}

\section{Overview of proof of  Theorem~\ref{thm:main}}\label{sec:Outline}

	Henceforth, we fix a non-negative integer $m$ of the form $\frac{d_1d_2 - x^2}{4}$ and a prime $\ell$, and we assume that $\ell\nmid f_1$.
We retain the notation fixed in~\S\ref{sec:ProofOfDefnOfFm}.  Recall from~\S\ref{sec:ProofOfDefnOfFm}, that
\[
	v_{\ell}(F(m)) = e(\mu/\ell)^{-1}\frac4{Cw_1w_2}
	\sum_{\substack{[\tau_i]\\\textup{disc}(\tau_i) = d_i}}
	\sum_{n\geq 1} \#\left\{f\in\Isom_{A/\mu^n}(E(\tau_1), E(\tau_2)):
	\textup{disc}(R) = m^2\right\}.
\]

	\subsection{High-level strategy}\label{subsec:HighLevelStrategy}
	As discussed in~\S\ref{sec:ProofOfDefnOfFm}, an element of $f\in\Isom_{A/\mu^n}$ gives rise to an embedding of $\OO_{d_2}\hookrightarrow\End_{A/\mu^n}(E(\tau_1))$ that is optimal away from $\ell$.  We first show that the problem of counting elements in $\cup_{\tau_2}\Isom_{A/\mu^n}(E(\tau_1), E(\tau_2))$ is equivalent to counting elements in $\End_{A/\mu^n}(E(\tau_1))$ that have a fixed degree and trace and have a fixed action on the Lie algebra.  To compute these elements we give detailed constructions of the endomorphism rings $\End_{A/\mu^n}(E(\tau_1))$, and show that the endomorphisms of a fixed degree and trace are in a finite-to-$1$ correspondence with ideals in $\OO_{d_1}$ of a certain norm.  We then classify how many of these endomorphisms have the desired action on the Lie algebra.
	
	This high-level strategy is the same as that employed by Gross and Zagier in the case that $-d_1$ is prime, $d_2$ is fundamental, and $\textup{gcd}(d_1,d_2) = 1$, and, soon after, by Dorman in the case that $d_1$ is squarefree, $d_2$ is fundamental, and $\textup{gcd}(d_1,d_2) = 1$.  However, the general case presents significantly more technical difficulties, which is perhaps not surprising, as it has been over 20 years since Gross-Zagier and Dorman published their papers.
	
	First, the straightforward generalizations of the constructions of the endomorphism rings $\End_{A/\mu^n}(E(\tau_1))$ given in~\cite{GZ-SingularModuli, Dorman-Orders} to the case where $d_1\equiv0\pmod 4$ and $\mu$ a prime of characteristic $2$ completely fail, \emph{even if} $d_1$ is fundamental.  If $d_1$ is not fundamental, then many arguments in~\cite{GZ-SingularModuli, Dorman-Orders} fail since the localizations of $\OO_{d_1}$ are not necessarily discrete valuation rings.  In addition, the descriptions of the constructions given in~\cite{Dorman-Orders} in the case that $d_1$ is squarefree and $\ell$ is ramified were incomplete and the proofs were omitted.  We give a construction that works generally for all $d_1$, not necessarily fundamental, and all primes $\ell$ regardless of the splitting behavior of $\ell$; see \S\ref{sec:QuaternionOrders} for more details.
	
	The next difficulty arose in studying the elements of the endomorphism rings; this study takes place in~\S\ref{sec:Main}.  The elements of these endomorphisms rings give rise to a study of ideals in $\OO_{d_1}$, an order that is not necessarily maximal.  This leads to difficulties in two ways.  The first is that, in the Picard group of a \emph{non-maximal} order, we can no longer assume that every $2$-torsion element is represented by a ramified prime ideal.  The second is that there are many more invertible ideals of order $p^n$ for $n$ sufficiently large when $p|f_1$ than in the usual case.
	
	The last point of difficulty is in determining the action on the Lie algebra in the case that $d_1$ and $d_2$ share a common factor.  If $d_1$ and $d_2$ are relatively prime, the action is almost trivial to compute.  Indeed, this step in~\cite{GZ-SingularModuli} was dealt with by a one line argument.  The general case is significantly more involved; see \S\ref{sec:LieToEndo} for more details.
	
	\subsection{Detailed outline of proof}\label{subsec:DetailedOutline}
	    Assume that $d_1$ is fundamental at $\ell$ and that either $\ell\nmid\gcd(d_1,\widetilde{d_2})$ or that $\ell>2$.  Let $\WW$ be the maximal order of $\Q_{\ell}(\sqrt{d_1})^{\unr}$; we write $\pi$ for the uniformizer of $\WW$.  Let $E/\WW$ be an elliptic curve with CM by $\OO_{d_1}$ and with good reduction.  {Note that the formula for $v_{\ell}(F(m))$ is local, and remains unchanged by unramified extensions so we may replace $L$ with $L_{\mu}^{\textup{unr}}$.  Further we may assume that $L$ is the minimal extension of $\Q_{\ell}^{\textup{unr}}$ such that every elliptic curve with CM by $\OO_{d_i}$ for $i = 1,2$ has good reduction at $\mu$.}
	
	As discussed above, we will relate the elements of $\Isom_{A/\mu^n}$ to certain endomorphisms.  {Fix $\deltatilde\in A$ a fixed root of $4z^2 - 4\widetilde{d}_2z + {\widetilde{d_2}^2 - \widetilde{d_2}}$, and $\delta := \ell^{s_2}\deltatilde - \frac12\ell^{s_2}\widetilde{d}_2(1 - \ell^{s_2})$. (Note that with this definition $\delta$ satisfies $4z^2 - 4{d}_2z + {{d_2}^2 - {d_2}}$.)} Now consider the following subsets of $\End_{A/\mu^n}(E)$.
	\begin{align*}
		S_{n}(E/A) & :=
		\left\{
		\begin{array}{ll}
			\phi\in\End_{A/\mu^n}(E) :
			& \phi^2 - d_2\phi +  \frac14(d_2^2 - d_2) = 0,\\
			&\Z[\phi]\hookrightarrow\End_{A/\mu}(E)
			\textup{ optimal away from }\ell
		\end{array}\right\},\\
		S_n^{\Lie}(E/A) & :=
			\left\{
			\begin{array}{rl}
				\phi\in S_{n}(E/A): &
				\widetilde{\phi} := \ell^{-s_2}\phi_0 + \frac12\widetilde{d}_2(1 - \ell^{s_2})
				= \deltatilde \textup{ in }\Lie(E\bmod\mu)\\
				& \phi = \delta \textup{ in }\Lie(E\bmod\mu^n)
			\end{array}\right\}.
	\end{align*}
	
In~\S\ref{sec:IsomToLie} we show that
	\begin{equation}\label{eqn:IsomToLie}
		\sum_{\tau_2}\#\Isom_{A/\mu^n}(E, E(\tau_2)) =
		\begin{cases}
			0 & \textup{if }\ell|f_2\textup{ and }n>1,\\
			\widetilde{w_2}\frac{\#\Pic(\OO_{d_2})}{\#\Pic(\OO_{\widetilde{d_2}})} \#S_{n}^{\Lie}(E) & \textup{otherwise}.
		\end{cases}
	\end{equation}
	
	In order to relate $v_{\ell}(F(m))$ to the cardinalities of $S_n^{\Lie}$, we must first partition these sets by $m$; recall that $v_{\ell}(F(m))$ counts isomorphisms that give rise to a suborder of $\End(E)$ of discriminant $m^2$.  We define:
	\begin{align*}
		S_{n, m}(E/A) & := \left\{\phi\in S_{n}(E/A):
		\textup{disc}(\OO_{d_1}[\phi]) = m^2\right\},\\
		S_{n,m}^{\Lie}(E/A) & := S_{n,m}(E/A)\cap S_n^{\Lie}(E/A).
	\end{align*}
{	From the definition of $F(m)$ and~\eqref{eqn:IsomToLie}, we see that
	\begin{equation}\label{eqn:ValToLie}
		\frac{Cw_1}{4}e(\mu/\ell)v_{\ell}(F(m)) =
		\begin{cases}
			\sum_{\tau_1}\sum_{n\geq1}\#S_{n,m}^{\Lie}(E(\tau_1)/A) &
			\textup{if }\ell\nmid f_2,\\
			\frac{\widetilde{w}_2}{w_2}\frac{\#\Pic(\OO_{d_2})}{\#\Pic(\OO_{\widetilde{d_2}})}
			\sum_{\tau_1}\#S_{1,m}^{\Lie}(E(\tau_1)/A) & \textup{if }\ell| f_2
		\end{cases}
	\end{equation}
	where $C = 1$ if $4m = d_1d_2$ and $C=2$ otherwise.

    Next we relate $\#S_{n,m}^{\Lie}(E/A)$ to $\#S_{n}(E/A)$ using the following result from~\S\ref{sec:LieToEndo}:
		\begin{align}
			\text{if }\ell|f_2, \quad& \#S_{1}^{\Lie}(E(\tau_1)/A) =
				\begin{cases}
					\#S_{1}(E(\tau_1)/\WW) & \textup{if }\ell|\widetilde{d}_2,\\
					\frac{1}2\#S_{1}(E(\tau_1)/\WW) &
						\textup{if }\ell\nmid \widetilde{d}_2,
				\end{cases}\label{eqn:Lie1ToEndo}\\
			\text{and if }\ell\nmid f_2, \quad&\sum_n \#S_{n}^{\Lie}(E(\tau_1)/A) =
				\begin{cases}
					\sum_n\#S_n(E(\tau_1)/\WW) & \textup{if }\ell\nmid d_1, \ell| \widetilde{d}_2,\\
                    \frac12 \sum_n\#S_{n}(E(\tau_1)/\WW) & \textup{if }\ell|d_1\textup{ or }\ell\nmid\widetilde{d}_2.
				\end{cases}\label{eqn:LieToEndo}
		\end{align}
        Together with~\eqref{eqn:ValToLie}, these expressions yield
        \begin{equation}\label{eqn:ValToEndo}
    		\frac{Cw_1}{4}e(\mu/\ell)v_{\ell}(F(m)) =
    		\begin{cases}
    			\sum_{\tau_1}\sum_{n\geq1}\#S_{n,m}(E(\tau_1)/\WW)
                    & \textup{if }\ell\nmid f_2d_1
                        \textup{ and }\ell|\widetilde{d}_2\\
    			\frac12\sum_{\tau_1}\sum_{n\geq1}\#S_{n,m}(E(\tau_1)/\WW)
                    & \textup{if }\ell\nmid f_2 \textup{ and either }
                        \ell\nmid\widetilde{d}_2\textup{ or }\ell|d_1,\\
                \frac{\widetilde{w}_2\#\Pic(\OO_{d_2})}
                    {w_2\#\Pic(\OO_{\widetilde{d_2}})}
                    \sum_{\tau_1}\#S_{1,m}(E(\tau_1)/\WW)
                    & \textup{if }\ell| f_2, \widetilde{d}_2,\\
                \frac{\widetilde{w}_2\#\Pic(\OO_{d_2})}
                    {2w_2\#\Pic(\OO_{\widetilde{d_2}})}
                    \sum_{\tau_1}\#S_{1,m}(E(\tau_1)/\WW)
                    & \textup{if }\ell| f_2, \ell\nmid\widetilde{d}_2.
    		\end{cases}
        \end{equation}

    {Now we turn to the computation of $e(\mu/\ell)$; recall our running assumption that either $\ell>2$ or $\ell\nmid\gcd(d_1,\widetilde{d}_2)$.  Note that since $d_1$ is fundamental at $\ell$, $L = L_{2,\mu}^{\textup{unr}}(\sqrt{d_1})$.  Moreover, if $\ell|d_2$, then $\sqrt{d_1}\in L_{2,\mu}^{\textup{unr}}$, and if $\ell\nmid f_2$, then the only ramification comes from that of $\Q(\sqrt{d_1}, \sqrt{d_2})/\Q.$

    Assume that $\ell|f_2$.  By local class field theory and~\cite[Theorem 9(2)]{SerreTate}, the ramification degree of $L/\Q_{\ell}^{\textup{unr}}(\sqrt{d_2})$ is  $\ell^{s_2}$ if $\ell|\widetilde{d_2}$, $\ell^{s_2 - 1}(\ell + 1)$ if $\ell$ in inert in $\Q(\sqrt{d_2})$, and $\ell^{s_2 - 1}(\ell - 1)$ if $\ell$ is split in $\Q(\sqrt{d_2})$.  By~\cite[Theorem 7.24]{Cox-PrimesOfTheForm}, this is equal to $\frac{\widetilde{w}_2}{w_2}\cdot\left(\#\Pic(\OO_{d_2})/\#\Pic(\OO_{\widetilde{d}_2})\right)$.  Thus, we have:}
    %
	\[
		e(\mu/\ell) =
			\begin{cases}
				1 & \textup{if }\ell\nmid d_1d_2,\\
				2 & \textup{if }\ell|d_1d_2, \ell\nmid f_2,\\
                \frac{2\widetilde{w}_2\#\Pic(\OO_{d_2})}
                    {w_2\#\Pic(\OO_{\widetilde{d_2}})}
				 &
				\textup{if }\ell|f_2, \widetilde{d}_2,\\
                \frac{\widetilde{w}_2\#\Pic(\OO_{d_2})}
                    {w_2\#\Pic(\OO_{\widetilde{d_2}})}
				 &
				\textup{if }\ell|f_2, \ell\nmid\widetilde{d}_2.
			\end{cases}
	\]

    Combining this with~\eqref{eqn:ValToEndo}, we obtain:
        \begin{equation}\label{eqn:ValToEndo2}
    		\frac{Cw_1}{2}v_{\ell}(F(m)) =
    		\begin{cases}
    			\frac1e\sum_{\tau_1}\sum_{n\geq1}\#S_{n,m}(E(\tau_1)/A)
                    & \textup{if }\ell\nmid f_2,\\
                 \sum_{\tau_1}\#S_{1,m}^{\Lie}(E(\tau_1)/A)
                    & \textup{if }\ell| f_2.
    		\end{cases}
        \end{equation}

    It remains to compute $\sum_{\tau_1}\#S_{n,m}(E(\tau_1)/A)$.
    \begin{lemma}
     Let $m=0$.  Then  $\#S_{n,0}(E/A) = 0$, unless $d_2 = d_1\ell^{2k}$, in which case $\#S_{n,0}(E/A) = 2$.
    \end{lemma}

    \begin{proof}
    	When $m = 0$, then $S_{n,m}(E/A)$ consists of elements in $\OO_{d_1}$ with trace $d_2$ and norm $\frac14(d_2^2 - d_2)$.  In addition, the order generated by these elements must be optimally embedded at $p\ne\ell$.  Thus  $\#S_{n,0}(E/A) = 0$, unless $d_2 = d_1\ell^{2k}$, in which case $\#S_{n,0}(E/A) = 2$.
    \end{proof}

    When $m\neq 0$ we will compute $\sum_{\tau_1}\#S_{n,m}(E(\tau_1)/A)$ by giving an explicit presentation for $\End_{\WW/\pi^n}(E)$(\S\ref{sec:QuaternionOrders}), and then use this presentation to relate the elements of $S_{n,m}(E/\WW)$ to integral invertible ideals of norm $m\ell^{-r}$, where $r = 2n - 1$ if $\ell$ is inert in $\Q(\sqrt{d_2})$ and $r = n$ otherwise(\S\ref{sec:Main}).  More precisely we will prove in \S\ref{sec:Main}:
    %
    %
    %
	\begin{thm}\label{thm:DeterminingSnm}
		Assume that $\ell\nmid f_1$ and that $m\neq 0$.  Then $\sum_{\tau_1}\#S_{n,m}(E(\tau_1)/\WW)$ is equal to an explicitly computable weighted sum of the number of certain invertible ideals of norm $\ell^{-r}m$, where $r = 2n - 1$ if $\ell\nmid d_1$ and $r = n$ otherwise.  If, in addition, $m$ and $f_1$ are relatively prime, then
		\[
			\sum_{\tau_1}\#S_{n,m}(E(\tau_1)/\WW) = \frac{Cw_1}{2}\rho(m)\frakA(\ell^{-r}m),
		\]
		where
		\begin{align*}
			\rho(m) = &
				\begin{cases}
					0 &\textup{if } (d_1, -m)_p = -1 \textup{ for }p|d_1, p \nmid f_1\ell,\\
					2^{\#\{p|(m, d_1) : p\nmid f_2
					\textup{ or } p= \ell\}} & \textup{otherwise}.
				\end{cases}
		\end{align*}
		and
		\begin{align*}
			\frakA(N) = & \#\left\{
				\begin{array}{ll}
					& \Norm(\frakb) = N, \frakb \textup{ invertible},\\
					\frakb\subseteq\OO_{d_1} : & p\nmid\frakb
					\textup{ for all }p |\textup{gcd}(N, f_2), p\nmid \ell d_1\\
					& \frakp^3\nmid\frakb\textup{ for all }
					\frakp|p|\textup{gcd}(N, f_2, d_1), p\ne\ell
				\end{array}
				\right\}.
		\end{align*}
	\end{thm}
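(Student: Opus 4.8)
The plan is to prove Theorem~\ref{thm:DeterminingSnm} by converting $\sum_{\tau_1}\#S_{n,m}(E(\tau_1)/\WW)$ into a weighted count of invertible integral ideals of $\OO_{d_1}$, via the explicit presentation of the orders $\End_{\WW/\pi^n}(E(\tau_1))$ established in~\S\ref{sec:QuaternionOrders}. Fix $\tau_1$ and write $\mathcal{O}_n := \End_{\WW/\pi^n}(E(\tau_1))$, an order in the quaternion algebra $\BBl$ containing the optimal-away-from-$\ell$ image of $\OO_{d_1}$ (Proposition~\ref{prop:optimal_embedding}). By definition, $\phi\in S_{n,m}(E(\tau_1)/\WW)$ means $\phi\in\mathcal{O}_n$ has reduced trace $d_2$ and reduced norm $\tfrac14(d_2^2-d_2)$, with $\Z[\phi]\cong\OO_{d_2}$ optimal away from $\ell$ and $\textup{disc}(\OO_{d_1}[\phi])=m^2$. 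Writing $K_1 := \OO_{d_1}\otimes\Q$ and $\BBl = K_1\oplus K_1 j$ with $j$ anticommuting with $K_1$, I would decompose $\phi$ along this splitting: the parameter measuring the $K_1$-diagonal part of the trace-zero element $\phi_0 := \phi - \tfrac{d_2}{2}$ is governed, up to sign, by $x$ through the discriminant condition, while the $K_1$-component $b$ of the ``new'' part has $\Norm_{K_1/\Q}(b)$ pinned down by the relation $\phi_0^2 = \tfrac{d_2}{4}$. The presentation of $\mathcal{O}_n$ produces an explicit fractional $\OO_{d_1}$-ideal $\frakc_n$ --- a power of $(\ell)$ when $\ell\nmid d_1$ and a power of $\frakl$ when $\ell\mid d_1$ --- such that $\phi\in\mathcal{O}_n$ is equivalent to $b\in\frakc_n$. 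Attaching to $\phi$ the integral ideal $\frakb := b\,\frakc_n^{-1}$ (or $\frakc_n b^{-1}$, whichever is integral) and computing norms yields $\Norm(\frakb)=\ell^{-r}m$ with $r=2n-1$ if $\ell\nmid d_1$ and $r=n$ if $\ell\mid d_1$.

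I would then show that the two optimality requirements (for $\OO_{d_1}[\phi]$ and for $\Z[\phi]$) translate, prime by prime, into exactly the local conditions on $\frakb$ recorded by $\frakA$: at $p\mid\gcd(\Norm(\frakb),f_2)$ with $p\nmid\ell d_1$ one needs $p\nmid\frakb$, and at a ramified prime $\frakp\mid p\mid\gcd(\Norm(\frakb),f_2,d_1)$ with $p\neq\ell$ one needs $\frakp^3\nmid\frakb$. The factor $\rho(m)$ is the multiplicity of the correspondence $\phi\mapsto\frakb$: the vanishing case records the local obstruction at ramified primes $p\mid d_1$ with $p\nmid f_1\ell$, where $(d_1,-m)_p=-1$ forces $S_{n,m}=\varnothing$ because $R\otimes\Q$ would then ramify at $p$, contradicting that $\BBl$ ramifies only at $\ell$ and $\infty$; and the power of $2$ counts, at each ramified prime $p\mid\gcd(m,d_1)$ with $p\nmid f_2$ or $p=\ell$, the two local embeddings of the relevant local quadratic order producing the same ideal (when $p\mid f_2$, $p\neq\ell$, the optimality condition at $p$ cuts this down to one). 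The factor $\tfrac{Cw_1}{2}$ absorbs the action of $\OO_{d_1}^\times$ together with the bookkeeping convention for $\pm x$.

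The purpose of summing over $\tau_1$ is to eliminate the ideal-class ambiguity. The curves $E(\tau_1)$ form a torsor under $\Pic(\OO_{d_1})$ that maps, on reduction mod $\mu$, into a single isogeny class, and replacing $E(\tau_1)$ by $\fraka\ast E(\tau_1)$ multiplies the associated ideal $\frakb$ by $\fraka$. Hence, as $\tau_1$ ranges over all isomorphism classes, the ideals $\frakb$ produced run over \emph{every} invertible integral ideal of norm $\ell^{-r}m$ meeting the local conditions, each with the same multiplicity, and the counts assemble into $\sum_{\tau_1}\#S_{n,m}(E(\tau_1)/\WW)=\tfrac{Cw_1}{2}\rho(m)\frakA(\ell^{-r}m)$ when $\gcd(m,f_1)=1$. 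In general, when some $p\mid\gcd(m,f_1)$, the local count of invertible ideals of prescribed $p$-power norm is no longer uniform --- extra non-principal invertible ideals supported at $p$ appear once the exponent is large --- so one obtains only a weighted sum of such ideal counts, with weights depending on $v_p(m)$; that is the content of the first assertion. The case $m=0$ is handled separately and directly, using that a maximal quaternion order has no rank-$3$ suborder, so it need not enter this analysis.

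I expect the main obstacle to lie in the ramified and non-maximal cases of the first two steps. When $\ell\mid d_1$ the presentation of $\mathcal{O}_n$ from~\S\ref{sec:QuaternionOrders} is substantially more intricate (precisely the case left incomplete in~\cite{Dorman-Orders}), the module $\frakc_n$ is built from $\frakl$, and the norm bookkeeping must be redone to yield $r=n$ rather than $r=2n-1$; verifying that $\phi\mapsto\frakb$ is a bijection onto the ideals counted by $\frakA$ with the correct multiplicity $\rho(m)$ is where the real work is. Second, since $\OO_{d_1,p}$ and $\OO_{d_2,p}$ need not be discrete valuation rings at $p\mid f_2$, propagating the optimality-away-from-$\ell$ conditions through the correspondence so as to extract exactly ``$p\nmid\frakb$'' and ``$\frakp^3\nmid\frakb$'' --- and no further constraints --- requires a careful local analysis at each such $p$.
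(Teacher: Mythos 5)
Your overall route matches the paper's: identify $\End_{\WW/\pi^n}(E(\tau_1))$ with the explicit orders $R_n(\fraka)$ of \S\ref{sec:QuaternionOrders}, send $\phi=[\alpha,\beta]$ to an integral ideal of norm $\ell^{-r}m$, and translate the optimality-away-from-$\ell$ requirements into the divisibility conditions defining $\frakA$ (your Lemmas would be the analogues of Lemmas~\ref{lem:index-unram} and~\ref{lem:index-ram}, and your $r$-bookkeeping and vanishing criterion via ramification of $\BBl$ are correct). But there is a genuine gap at the heart of the argument: the multiplicity computation. Membership of $\phi$ in $R_n(\fraka)$ is \emph{not} equivalent to a condition on the $K_1 j$-component alone; besides $\beta\in\frakq^{-1}\ell^{\,n-1}\frakD^{-1}\frakabar\fraka^{-1}$ (a module depending on $\fraka$, not just a power of $\ell$ or $\frakl$), one needs the gluing condition $\alpha-\lambda_{\fraka}\beta\in\OO_{d_1}$, and this condition is exactly what controls how many pairs $(\beta,\fraka)$, together with units $\omega\in\OO_{d_1}^{\times}$ and twists by $[\frakc]\in\Pic(\OO_{d_1})[2]$, lie over a given ideal $\frakb$. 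The paper's proof spends most of its effort here: it shows the fiber size is governed by counting residues $\gamma\in(\OO/\frakD)^{\times}$ with $\Norm(\gamma)\equiv 1\bmod d_1$ (Lemmas~\ref{lem:norm1elts} and~\ref{lem:fromidealtoelement}), yielding $\tfrac{w_1}{2}\widetilde{\rho}_{d_1}(-t,d_2)$ solutions of the gluing condition, which the index condition at primes $p\mid\gcd(m,d_1,f_2)$ then cuts in half; this is where $\rho(m)$ and the hypothesis $\gcd(m,f_1)=1$ (needed for the \emph{existence} part of Lemma~\ref{lem:fromidealtoelement}) actually enter. Your proposal instead asserts uniform multiplicity via the torsor action of $\Pic(\OO_{d_1})$, ``replacing $E(\tau_1)$ by $\fraka\ast E(\tau_1)$ multiplies $\frakb$ by $\fraka$,'' which cannot be right as stated (all the ideals produced have the fixed norm $\ell^{-r}m$), and your heuristic that the power of $2$ in $\rho(m)$ counts two local embeddings giving the same ideal is not an argument.

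A second, related omission: you claim the image of $\phi\mapsto\frakb$ consists of \emph{all} invertible ideals of norm $\ell^{-r}m$ satisfying the local conditions, but the construction only produces ideals in a fixed genus class, $\frakb\sim\frakr\pmod{2\Pic(\OO_{d_1})}$ (Proposition~\ref{prop:obtainingb}). Reconciling this constraint with the statement of the theorem requires the genus-theoretic input of \S\ref{sec:Background} (Corollary~\ref{cor:genus}) together with the specific choice of the auxiliary prime $q$ in \S\ref{sec:choosing-q}, which shows that, when $\gcd(m,f_1)=1$, either every ideal of norm $\ell^{-r}m$ lies in that genus (precisely when $\rho(m)\neq 0$) or none does. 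Without both this step and the fiber count above, the formula $\tfrac{Cw_1}{2}\rho(m)\frakA(\ell^{-r}m)$ is not established; filling them in essentially forces you back onto the paper's argument.
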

	We will also prove in~\S\ref{sec:Main} that $\rho(m)\frakA(\ell^{-r} m)$ can be expressed as a product of local factors.  
	%
	 This completes the proof of Theorem~\ref{thm:main}.\qed
}	

\section{Relating $\Isom_{A/\mu^n}$ to $S_{n}^{\Lie}$}\label{sec:IsomToLie}

	We retain the notation from the previous section.  From now on, we assume that $\ell\nmid f_1$.  In this section, we prove
	\begin{prop}\label{prop:IsomToLie}
		Let $E$ be any elliptic curve over $A$ with good reduction such that $E\isom \OO_{d_1}$.  Let $d_2$ be a quadratic imaginary discriminant different from $d_1$.  Then
		\[
		\sum_{\tau_2}\#\Isom_{A/\mu^n}(E, E(\tau_2)) =
		\begin{cases}
			0 & \textup{if }\ell|f_2\textup{ and }n>1,\\
			\widetilde{w_2}\frac{\#\Pic(\OO_{d_2})}{\#\Pic(\OO_{\widetilde{d_2}})} \#S_{n}^{\Lie}(E) & \textup{otherwise}.
		\end{cases}
		\]
		where $e_2(\mu/\ell)$ is the ramification degree of $L_{\mu}^{\unr}$ over the completion of the ring class field of $\OO_{d_2}$ at the restriction of $\mu$.
	\end{prop}
	\begin{proof}
        Assume that $\ell|f_2$.  By~\cite{LT-FormalModuli} and by comparing ramification degrees of $\ell$ in $L_2$ and in $H_{\widetilde{d_2}}$, we see that $\Isom_{A/\mu^n}(E, E(\tau_2)) = \varnothing$ for all $\tau_2$ of discriminant $d_2$. Henceforth, we restrict to the case that $\ell\nmid f_2$ or $n = 1$.

		Let $E' = E(\tau_2)$ for some $\tau_2$. Assume that $\Isom_{A/\mu^n}(E, E')\neq\varnothing$, and let $g\in \Isom_{A/\mu^n}(E, E')$.  If $\theta'\in\End(E')$ is the unique element such that $c(\theta') = \delta$ in $\Lie(E')$, then $\theta'^g := g^{-1}\circ\theta\circ g\in\End(E\bmod\mu^n)$ has degree equal to $\frac14(d_2^2 - d_2)$, trace equal to $d_2$, and $\theta'^g = \delta$ in $\Lie(E \bmod \mu^n)$.  In addition, the order $\Z[\theta'^g]$ is $p$-optimally embedded in $\End(E\bmod\mu)$ for all $p\neq\ell$ by Proposition~\ref{prop:optimal_embedding}.  If $p = \ell$, then $\widetilde{\theta'} := \ell^{-s_2}\theta' + \frac12\widetilde{d}_2(1 - \ell^{s_2}) \in\End(E'\bmod\mu)$ and $c(\widetilde{\theta'}),$ $c(\widetilde{\theta'}^g)$  are equal to $\widetilde{\delta}$ in $\Lie(E'\bmod\mu)$, $\Lie(E\bmod\mu)$ respectively.  Therefore, we have a set map
		\begin{equation}\label{eq:IsomToLie}
            \bigcup_{[\tau_2]}\Isom_{A/\mu^n}(E, E(\tau_2)) \longrightarrow
            S_{n}^{\Lie}(E/A).
		\end{equation}
        {Further, this map is surjective by the Serre-Tate lifting theorem~\cite[Thm. 3.3]{Conrad-GZ}, the Grothendieck Existence theorem~\cite[Thm. 3.4]{Conrad-GZ}, and the existence of canonical and quasi-canonical liftings (see~\cite{Gross-CanonicalLifts} in the supersingular case and~\cite[Prop. 3.5]{Meusers-CanonicalLifts} in the ordinary case).  To complete the proof, we must determine the size of a fiber.


         Let $g\in \Isom_{A/\mu^n}(E,E')$.  Then the pairs $(E'' = E(\tau_2''), g''\in \Isom_{A/\mu^n}(E, E'')$ such that $\theta''^{g''} = \theta'^{g'}$ is in bijection with isomorphisms
         %
          $f\colon E'\bmod{\mu^n}\to E''\bmod{\mu^n}$ such that the diagram
         \[\xymatrix{
             E'\bmod{\mu^n} \ar[d]^f\ar[r]^{\theta'} & E'\bmod{\mu^n}\ar[d]^f\\
             E''\bmod{\mu^n} \ar[r]^{\theta''} & E''\bmod{\mu^n}\\
         }
         \]
         commutes.  In particular, if $\theta'^{g'} = \theta''^{g''}$ then the diagram commutes when $n = 1$ with $\theta'$ and $\theta''$ replaced with $\widetilde\theta'$ and $\widetilde\theta''$ respectively.  By~\cite[Prop. 2.7]{GZ-SingularModuli}, there exist $\#\Pic(\OO_{\widetilde{d_2}})$ distinct isomorphism classes of pairs $(E'\bmod \mu, \theta'\in\End_{A/\mu}(E'))$.  In particular, these isomorphisms classes correspond exactly to the isomorphism classes $(\widetilde E', \widetilde\theta'\in \End(\widetilde E'))$.  So the above diagram commutes for arbitrary $n$ if and only if it commutes for $n=1$.  (Recall if $\ell|f_2$, then we need only consider the case $n=1$.)

         Now it suffices to consider the case where $E' = E''$.  In that case, we are interested in determining the number of automorphisms of $E'\bmod{\mu^n}$ which commute with $\theta'$.  Since the centralizer of $\theta'$ is exactly $\Q(\theta')\cap\End_{A/\mu^n}(E')$, there are exactly $\widetilde{w_2}$ such automorphisms.  In summary, the fibers have cardinality
         \[
             {\#\Pic(\OO_{d_2})}\cdot\frac1{\#\Pic(\OO_{\widetilde{d_2}})}\cdot \widetilde{w_2},
         \]
         which completes the proof.
         }
\end{proof}

\section{Background: quadratic imaginary orders}
\label{sec:QuadImagOrders}\label{sec:Background}
	Let $\OO$ be an order in a quadratic imaginary field, and let $d$ be the discriminant of $\OO$.  Let $\fraka$ be an ideal in $\OO$.  If $\OO$ is not maximal, then we can not necessarily write $\fraka$ uniquely as a product of primes.  However, we can always write $\fraka$ uniquely as a product of primary ideals where no two ideals in the factorization are supported at the same prime.  Precisely, for any prime $\pp$, define $\fraka_{\pp} := \OO\cap\fraka\OO_{\pp}$.  Then $\fraka = \bigcap_{\pp}\fraka_{\pp}$, and since for any $2$ distinct primes $\pp, \qq$, $\fraka_{\pp}$ and $\fraka_{\qq}$ are co-maximal, we have that
	\[
		\fraka = \prod_{\pp}\fraka_{\pp}.
	\]
	(See~\cite[Prop 12.3]{Neukirch} for more details.)  If there is a unique prime $\pp\subseteq\OO$ lying over $p$, then we will often write $\fraka_{p}$ instead of $\fraka_{\pp}.$
	
	We will often be concerned with the special case where $\fraka = \frakD := \sqrt{d}\OO$.  If $p|d$ is odd, then for $a,b\in\OO$, the difference $a - b\in\frakD_p$ if and only if $\Tr(a)\equiv\Tr(b)\pmod{p^{v_p(d)}}$.  If $p = 2|d$, then $a - b\in\frakD_2$ if and only if $a_0\equiv b_0\pmod{2^{v_2(d) - 1}}$ and $a_1\equiv b_1\pmod{2},$ where $a = a_0 + a_1 \frac{d+\sqrt{d}}{2}$ and $b = b_0 + b_1\frac{d + \sqrt{d}}{2}$.
	
	\subsection{The Picard group}

		The Picard group of $\OO$, denoted $\Pic(\OO)$, is
the group of invertible fractional ideals modulo fractional principal ideals. It  is isomorphic to the \defi{form class group} $C(d)$, the group of classes of primitive positive definite forms of discriminant $d$~\cite[\S7]{Cox-PrimesOfTheForm}.  We will use this isomorphism to determine whether there exists an ideal in $2\Pic(\OO)$ of a certain norm.  For more information on genus theory, i.e. the study of $\Pic(\OO)/2\Pic(\OO)$, see~\cite{Cox-PrimesOfTheForm}.

		Let $p_1, \ldots, p_j$ be the {distinct} odd primes dividing $d$.  Define
		\[
			k = \begin{cases}
				j & \textup{ if }d \equiv 1 \pmod 4 \textup{ or }
					d \equiv 4 \pmod{16},\\
				j + 1 & \textup{ if }d \equiv 8\textup{ or } 12\pmod{16} \textup{ or }
					d \equiv 16 \pmod{32},\\
				j + 2 & \textup{ if }d \equiv 0 \pmod{32}.
			\end{cases}
		\]
		For $i = 1, \ldots, j$, we define $\chi_{p_i}(a) := \left(\frac{a}{p_i}\right)$ for $a$ coprime to $p_i$.  For $a$ odd, we also define $\chi_{-4}(a) := \left(-1 \right)^{\frac{a - 1}{2}}$, $\chi_8(a) := \left(-1\right)^{\frac{a^2 - 1}{8}}.$  Then we define $\Psi\colon \left(\Z/d\Z\right)^{\times}\to \{\pm1\}^{k}$ as follows.
		\[
			\Psi = \begin{cases}
				(\chi_{p_1}, \ldots, \chi_{p_j}) & \textup{ if }
					d \equiv 1 \pmod 4 \textup{ or }
					d \equiv 4 \pmod{16},\\
				(\chi_{p_1}, \ldots, \chi_{p_j}, \chi_{-4}) & \textup{ if }
					d \equiv 12 \pmod{16} \textup{ or }
					d \equiv 16 \pmod{32},\\
				(\chi_{p_1}, \ldots, \chi_{p_j}, \chi_8) & \textup{ if }
					d \equiv 8 \pmod{32},\\
				(\chi_{p_1}, \ldots, \chi_{p_j},\chi_{-4}\chi_8) & \textup{ if }
					d \equiv 24 \pmod{32},\\
				(\chi_{p_1}, \ldots,\chi_{p_j},\chi_{-4},\chi_8) & \textup{ if }
					d \equiv 0 \pmod{32}.\\
			\end{cases}
		\]
		For a prime $p$ that divides $d$, but does not divide the conductor $f$ of $\OO$, we define
		\[
			\Psi_{p} =
				\begin{cases}
					\chi_{p_i} & \textup{ if } p = p_i,\\
					\chi_{-4} & \textup{ if } p = 2
							 \textup{ and } d\equiv 12\pmod{16},\\
					\chi_8 & \textup{ if } p = 2
							   \textup{ and } d\equiv 8\pmod{32},\\
					\chi_{-4}\cdot\chi_8 & \textup{ if } p = 2
								   \textup{ and } d\equiv 24\pmod{32}.
				\end{cases}
		\]
		Let $\widehat{\Psi}_{p}$ be the projection of $\Psi$ on the components that are complementary to the one that appears in $\Psi_{p}$.

		{For $p\nmid f$}, we may extend $\Psi_{p}$ to integers divisible by $p$ by defining 
		$\Psi_p(n)$ to be the Hilbert symbol $(d, n)_p$, for any integer $n$ coprime to $f$.   Thus we can extend $\Psi$ to $\left(\Z/f\Z\right)^{\times}$, where $f$ denotes the conductor of $\OO$.
		
		This map $\Psi$ can be used to test when an ideal $\fraka$ {that is prime to $f$} is a square in the Picard group.
		\begin{thm}[\cite{Cox-PrimesOfTheForm}*{\S\S3\&7}]\label{thm:sqclasses}
			  For any positive integer $m$ prime to the conductor $f$ of $\OO_d$, there exists an invertible ideal $\fraka$ such that $\NN(\fraka) = m$ and $[\fraka]\in2\Pic(\OO_d)$ if and only if $m \in \ker \Psi$.
		\end{thm}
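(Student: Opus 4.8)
The statement is the classical genus theory of positive definite binary quadratic forms, and the plan is to transport it to the form class group $C(d)\cong\Pic(\OO_d)$ and then invoke Gauss's theorem on the principal genus, deducing everything from~\cite{Cox-PrimesOfTheForm}; the only genuine work is to match $\Psi$ with the assigned characters used there and to pass from the possibly non-maximal order $\OO_d$ to the maximal one. Write $d=f^2d_K$ with $d_K$ fundamental. First I would record the ideal--form dictionary in the form needed. Since there is a unique prime of $\OO_d$ above each $p\mid f$, of norm $p$, an ideal of norm prime to $f$ is automatically prime to the conductor, hence invertible (\cite{Cox-PrimesOfTheForm}*{\S7}); thus the word ``invertible'' in the statement is forced once $\gcd(m,f)=1$. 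Under $\Pic(\OO_d)\cong C(d)$ an invertible ideal $\fraka$ with $\NN(\fraka)=m$ corresponds to a proper class of primitive positive definite forms of discriminant $d$ admitting a representative $Q_\fraka$ with leading coefficient $m$: writing $\fraka=\Z m+\Z\frac{-b+\sqrt{d}}{2}$ with $b^2\equiv d\pmod{4m}$, one may take $Q_\fraka=mx^2-bxy+\frac{b^2-d}{4m}y^2$, which represents $m$. Conversely, an integer $m$ prime to $f$ that is represented by a form of discriminant $d$ equals $\NN(\frakb)$ for some invertible ideal $\frakb$ in the corresponding class. Finally, this isomorphism carries $2\Pic(\OO_d)$ onto $C(d)^2$, the subgroup of squares.

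Next I would bring in genus theory. Gauss's theorem (\cite{Cox-PrimesOfTheForm}*{\S3} for $d$ fundamental, and \cite{Cox-PrimesOfTheForm}*{\S7} in general) asserts that $C(d)^2$ coincides with the principal genus and that the genus of a class is detected by the tuple of assigned characters evaluated on any integer prime to $f$ that the class represents. The check I would carry out is that the components $\chi_{p_i}$, $\chi_{-4}$, $\chi_8$ of $\Psi$ together with their relevant products are exactly these assigned characters: a prime-by-prime identification of each with the local Hilbert symbol $(d,\cdot)_p$, which is precisely the information packaged by the case split of the definition of $\Psi$ according to $d$ modulo $16$ and $32$. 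It follows that $\Psi$ descends to a homomorphism $C(d)\to\{\pm1\}^k$ whose kernel is the principal genus $=C(d)^2$, and hence that for any form $Q$ of discriminant $d$ representing an integer $m$ prime to $f$ one has $[Q]\in C(d)^2$ if and only if $\Psi(m)=1$.

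It then remains to assemble the two implications. If $\fraka$ is invertible with $\NN(\fraka)=m$ and $[\fraka]\in2\Pic(\OO_d)$, then its form $Q_\fraka$ represents $m$ and lies in $C(d)^2$, so $\Psi(m)=1$ by the equivalence just stated. Conversely, if $m$ is prime to $f$ with $\Psi(m)=1$, the genus theory of~\cite{Cox-PrimesOfTheForm} produces a form $Q$ of discriminant $d$ lying in the principal genus $=C(d)^2$ and representing $m$; the invertible ideal $\fraka$ that corresponds to $Q$ under $\Pic(\OO_d)\cong C(d)$ then satisfies $\NN(\fraka)=m$ and $[\fraka]\in2\Pic(\OO_d)$. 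I expect the main obstacle to be the genus-theoretic input in the non-maximal case: one must verify that $\Psi$, a priori only a function on integers, is well defined on the ideal classes of $\OO_d$ when $\OO_d$ is not maximal, and that the principal genus there is still exactly the group of squares; this is where the hypothesis $\gcd(m,f)=1$ is essential and where one must lean on~\cite{Cox-PrimesOfTheForm}*{\S7}. A smaller, already classical, point is the matching of $\chi_{-4}$, $\chi_8$, and $\chi_{-4}\chi_8$ with the $2$-adic Hilbert symbol, which is what forces the several congruence conditions on $d$ modulo $16$ and $32$ in the definition of $\Psi$.
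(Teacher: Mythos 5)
Your overall route is the same one the paper intends: the paper does not prove this statement at all but quotes it from \cite{Cox-PrimesOfTheForm}, and your reduction --- transport to the form class group $C(d)\cong\Pic(\OO_d)$, Gauss's principal genus theorem, identification of the components of $\Psi$ with the assigned characters (equivalently with the Hilbert symbols $(d,\cdot)_p$, which is also what lets norms divisible by ramified primes $p\mid d$, $p\nmid f$ be handled), and \S 7 of Cox for the non-maximal order, including the fact that $\gcd(\Norm(\fraka),f)=1$ forces $\fraka$ to be prime to the conductor and hence invertible --- is exactly the content the citation is meant to carry. The forward implication and the well-definedness of $\Psi$ on ideal classes are fine as you argue them.

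The gap is in your converse. From $\gcd(m,f)=1$ and $\Psi(m)=1$ alone, genus theory does not ``produce a form of discriminant $d$ lying in the principal genus and representing $m$'': membership of $m$ in $\ker\Psi$ is only a congruence condition modulo $d$ and imposes nothing at the primes not dividing $d$, so it cannot guarantee that $m$ is represented by any form of discriminant $d$ at all. Concretely, for $d=-4$ one has $\Psi=(\chi_{-4})$ and $21\in\ker\Psi$, yet no invertible ideal of $\Z[i]$ has norm $21$. What Cox's genus theory actually gives --- and what the paper needs later, e.g.\ in Corollary~\ref{cor:genus}, where the ideal is handed to you --- is the statement for $m$ that already occurs as the norm of an invertible ideal (equivalently, is represented by some form of discriminant $d$): in that case the genus of the class is computed by evaluating the assigned characters at $m$, so every (equivalently, some) invertible ideal of norm $m$ lies in $2\Pic(\OO_d)$ if and only if $m\in\ker\Psi$. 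So you should either add the hypothesis that $m$ is a norm of an invertible ideal (the reading under which the quoted theorem is correct and under which it is applied), or prove this ``given-ideal'' version; the unconditional existence claim in your final paragraph cannot be repaired, since it fails as stated.
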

		\noindent From this theorem, we can easily obtain the following corollary.
		\begin{cor}\label{cor:genus}
			Let $\ell$ be a prime that divides $d$, but does not divide the conductor $f$.  Let $\fraka$ be an invertible integral ideal that is prime to the conductor.  Then $[\fraka]\in2\Pic(\OO)$ if and only if $\Norm(\fraka)\in\ker\widehat{\Psi}_{\ell}$.
		\end{cor}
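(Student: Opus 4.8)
The plan is to combine Theorem~\ref{thm:sqclasses} with Hilbert reciprocity. Write $d_K$ for the fundamental discriminant with $d=f^2d_K$. Since $\ell\mid d$ and $\ell\nmid f$ we have $\ell\mid d_K$, so the character $\Psi_\ell$ is one of the components that make up $\Psi$ (the $\chi_\ell$-component if $\ell$ is odd, a $2$-adic component if $\ell=2$), and $\widehat\Psi_\ell$ records the remaining components; thus $\Norm(\fraka)\in\ker\widehat\Psi_\ell$ precisely when every component of $\Psi$ other than $\Psi_\ell$ is trivial at $\Norm(\fraka)$.

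First I would show that $[\fraka]\in2\Pic(\OO)$ if and only if $\Psi(\Norm\fraka)=1$. The implication $\Rightarrow$ is the forward direction of Theorem~\ref{thm:sqclasses} applied with $m=\Norm\fraka$, using $\fraka$ itself as the witnessing ideal. For $\Leftarrow$, Theorem~\ref{thm:sqclasses} produces an invertible ideal $\frakb$ with $\Norm\frakb=\Norm\fraka$ and $[\frakb]\in2\Pic(\OO)$; then $\fraka\overline{\frakb}$ has norm the perfect square $(\Norm\fraka)^2$, and any invertible ideal of square norm lies in $2\Pic(\OO)$ (its $p$-primary part is a square times a principal ideal, for every $p$), so $[\fraka]=[\fraka\overline{\frakb}][\frakb]\in2\Pic(\OO)$. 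It therefore remains to prove that $\Norm(\fraka)\in\ker\widehat\Psi_\ell$ forces $\Psi_\ell(\Norm\fraka)=1$.

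For this I would invoke the relation among the ``ramified'' components of $\Psi$, those indexed by the primes $p\mid d_K$ (equivalently, the primes ramifying in $\Q(\sqrt d)$): for each such $p$ the product of the components of $\Psi$ above $p$ agrees with the Hilbert symbol $(d,\cdot)_p$ on integers prime to $f$ (this is exactly what the discussion of the $\Psi_p$ preceding Theorem~\ref{thm:sqclasses} establishes). Now apply $\prod_v(d,\Norm\fraka)_v=1$. The archimedean factor is $1$ since $\Norm\fraka>0$, and I claim $(d,\Norm\fraka)_p=1$ for every finite prime $p$ unramified in $\Q(\sqrt d)$: if $p\nmid d$ and $p\mid\Norm\fraka$ then $p$ is split or inert in $\Q(\sqrt d)$, and since $\fraka$ is invertible and prime to $f$ the localization $\OO_p$ is the maximal order, so in the split case $d$ is a square in $\Q_p$ and in the inert case $\OO_p$ is a discrete valuation ring with uniformizer $p$, making $v_p(\Norm\fraka)$ even; and if $p\mid d$ with $p$ unramified then $p\mid f$, so $p\nmid\Norm\fraka$ and $v_p(d)$ is even. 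Hence $\prod_{p\mid d_K}(d,\Norm\fraka)_p=1$, i.e.\ the product of the ramified components of $\Psi$ evaluated at $\Norm\fraka$ is $1$. As $\Psi_\ell$ is one of these ramified components and every other one occurs among the components of $\widehat\Psi_\ell$, the assumption $\Norm(\fraka)\in\ker\widehat\Psi_\ell$ makes all ramified components except $\Psi_\ell$ trivial at $\Norm\fraka$, and therefore $\Psi_\ell(\Norm\fraka)=1$, as desired.

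I expect the main obstacle to be the $2$-adic bookkeeping concealed in the statement that the components of $\Psi$ above a ramified prime multiply to $(d,\cdot)_p$ on integers prime to $f$: this requires tracking the $2$-adic component(s) of $\Psi$ through the congruence classes of $d$ modulo $32$, and the vanishing assertion $(d,\Norm\fraka)_2=1$ when $2\mid f$ uses that $d/2^{v_2(d)}\equiv1\pmod4$ and that $\Norm\fraka$ is odd. Apart from that the argument is routine genus theory from~\cite{Cox-PrimesOfTheForm}; the only genuinely new input is the single reciprocity relation transferring triviality of $\widehat\Psi_\ell$ at $\Norm\fraka$ to triviality of $\Psi_\ell$ there.
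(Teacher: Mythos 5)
Your proof follows essentially the same route as the paper's. The paper introduces the auxiliary character $\psi$ (``$m$ is the norm of an ideal'') and shows by quadratic reciprocity that $\psi=\prod_{p\mid d,\,p\nmid f}\Psi_p$; your use of the Hilbert product formula $\prod_v(d,\Norm\fraka)_v=1$ together with the local check that $(d,\Norm\fraka)_p=1$ at every place unramified in $\Q(\sqrt{d})$ is exactly that relation, made explicit at the level of Hilbert symbols. One genuinely useful addition: Theorem~\ref{thm:sqclasses} is an existence statement, and you supply the bridge from ``some invertible ideal of norm $\Norm\fraka$ lies in $2\Pic(\OO)$'' to ``$[\fraka]\in2\Pic(\OO)$'' via the square-norm observation — a step the paper's proof leaves implicit. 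Do restrict your blanket claim ``any invertible ideal of square norm lies in $2\Pic(\OO)$'' to ideals prime to $f$ (which is all you use): the paper's remark immediately after the corollary shows that the genus of an ideal meeting the conductor is not determined by its norm, and your parenthetical justification only works at primes $p\nmid f$.

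There is one incorrect intermediate claim: the assertion, attributed to the discussion preceding Theorem~\ref{thm:sqclasses}, that for each prime $p$ ramified in $\Q(\sqrt{d})$ the product of the components of $\Psi$ above $p$ equals $(d,\cdot)_p$ on integers prime to $f$. The paper identifies $\Psi_p$ with $(d,\cdot)_p$ only when $p\nmid f$, and the stronger statement fails when $2$ ramifies in $\Q(\sqrt{d})$ and also divides $f$: for $d=-96$ (field $\Q(\sqrt{-6})$, $f=2$, $\ell=3$) the components above $2$ are $\chi_{-4}$ and $\chi_8$, whose product is $\chi_{-4}\chi_8$, whereas $(d,\cdot)_2=\chi_8$ on odd integers; the two disagree already at $N=3$, which is the norm of the invertible ramified prime above $3$. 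Fortunately your argument only needs the weaker fact that for each ramified $p\ne\ell$ the symbol $(d,\cdot)_p$, restricted to positive integers prime to $f$, is a product of \emph{some} subset of the components of $\widehat{\Psi}_{\ell}$: for odd $p$ it equals $\chi_p$, and for $p=2\mid f$ it is a product of the $2$-adic components of $\Psi$ (here $\ell$ is odd, so these all occur in $\widehat{\Psi}_{\ell}$). With that weakening, the hypothesis $\Norm(\fraka)\in\ker\widehat{\Psi}_{\ell}$ still kills every ramified factor except $(d,\Norm\fraka)_\ell=\Psi_\ell(\Norm\fraka)$, and your conclusion stands. So the proposal is correct after this repair, and apart from the extra bridging step it is the paper's argument in slightly different packaging.
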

		\begin{proof}
			Define $\psi\colon (\Z/f\Z)^{\times}\to \{\pm1\}$ to be such that for any positive integer $m$ that is coprime to $f$, $\psi(m) = 1$ if and only if there is an ideal in $\OO$ of norm $m$.  Using quadratic reciprocity, one can check that
			\[
				\psi(m) = \prod_{p|\widetilde{d}}\Psi_p(m).
			\]
			From this it is clear that $\Norm(\fraka)\in\ker\widehat{\Psi}_{\ell}$ if and only if $\Norm(\fraka)\in\ker{\Psi}$, which completes the proof.
		\end{proof}

        \subsection{{Genus class of ideals supported at the conductor}}
		Unfortunately, the map $\Psi$ cannot be extended to all integers while still retaining the properties described in Theorem~\ref{thm:sqclasses} and Corollary~\ref{cor:genus}.  This is because it is possible to have two invertible ideals $\fraka, \frakb\subseteq \OO_d$ with the same norm, such that $\fraka\frakb^{-1} \not\in2\Pic(\OO_d)$.  This can only occur when the ideals are not relatively prime to $f$.
		
		Let $\fraka$ be an integral invertible ideal that is supported at a single prime $p$ that divides the conductor, i.e. $\fraka_\frakq = \langle 1 \rangle$ for all $\frakq\nmid p$.  Let $\alpha\in\OO$ be a generator for $\fraka\OO_p$ such that $\textup{gcd}(\Norm(\alpha), f)$ is supported only at $p$.  Then $\fraka\sim\widetilde{\fraka}$ in $\Pic(\OO)$, where
		\[
			\widetilde{\fraka} := \OO_p\cap
			\bigcap_{\qq\nmid p}\left(\alpha\OO_\qq\right),
		\]
		and $\Norm(\widetilde{\fraka})$ is coprime to the conductor.  Thus, the genus of $\fraka$ is equal to $\Psi(\Norm(\widetilde{\fraka})).$  Since every ideal can be factored uniquely into comaximal primary ideals, this gives a method of computing the genus class of any ideal.
		
\section{Parametrizing endomorphism rings of supersingular elliptic curves}
\label{sec:DormanRepns}\label{sec:QuaternionOrders}

	Let $\ell$ be a fixed prime and let $\OO$ be a quadratic imaginary order of discriminant $d$ such that $\ell\nmid f:= \cond(d)$.  We assume that $\ell$ is not split in $\OO$.  Let $\WW$ be the ring of integers in $\Q_{\ell}^{\unr}(\sqrt{d})$, and write $\pi$ for the uniformizer.  By the theory of complex multiplication~\cite[\S10.3]{Lang-EllipticFunctions}, the isomorphism classes of elliptic curves that have CM by $\OO$ are in bijection with $\Pic(\OO)$, and every elliptic curve $E$ with CM by $\OO$ has a model defined over $\WW$.  Moreover, by~\cite[Cor. 1]{SerreTate}, we may assume that $E$ has good reduction.
	
	Fix a presentation $\BBl$ of the quaternion algebra ramified at $\ell$ and $\infty$, and fix an embedding $L := \textup{Frac}(\OO)\hookrightarrow \BBl$.  The goal of this section is to define, for every $[\fraka]\in\Pic(\OO)$, a maximal order $R(\fraka)\subset\BBl$ such that
	\begin{enumerate}
		\item $R(\fraka)\cap L = \OO$,
		\item The optimal embedding $\OO\hookrightarrow R(\fraka)$ is isomorphic to the embedding $\End(E(\fraka)) \hookrightarrow \End(E(\fraka)\bmod\pi)$, where $E(\fraka)$ is the elliptic curve with CM by $\OO$ that corresponds to $\fraka$, and
		\item $\frakb^{-1}R(\fraka)\frakb = R(\fraka\frakb)$.
	\end{enumerate}
	Since we will use these maximal orders in the next section to compute the sets $S_{n,m}(E(\fraka))$, we also want the orders $R(\fraka)$ to be fairly explicit.

	Our construction of these maximal orders $R(\fraka)$ generalizes the work of Gross-Zagier~\cite{GZ-SingularModuli} and Dorman~\cite{Dorman-Orders}, where they defined maximal orders with these properties under the assumption that $-d$ is prime~\cite{GZ-SingularModuli} or $d$ is squarefree~\cite{Dorman-Orders}.  We treat arbitrary discriminants $d$, correct errors and omissions in some proofs in~\cite{Dorman-Orders}, and treat the ramified case in detail, giving complete definitions and proofs.
	
	Note that Goren and the first author have given a different generalization of Dorman's work~\cite{GL} to higher dimensions, which works for CM fields $K$, characterizing superspecial orders in a quaternion algebra over the totally real field $K^+$ with an optimal embedding of $\calO_{K^+}$.  That work also corrects the proofs of~\cite{Dorman-Orders}, but in a slightly different way than we do here, and does not handle the ramified case or non-maximal orders.

	\subsubsection{Outline}
	In~\S\ref{sec:choosing-q}, we give an explicit presentation of $\BBl$ that we will work with throughout.  The construction of the maximal orders $R(\fraka)$ depends on whether $\ell$ is inert or ramified in $\OO$.  The inert case is discussed in detail in~\S\ref{subsec:inert}, and the construction in the ramified case is given in~\S\ref{subsec:ram}.  In these sections we also prove that our construction satisfies conditions $(1)$ and $(3)$. While the construction of $R(\fraka)$ is different in the ramified case, many of the proofs go through as in the inert case with minor modifications.  Because of this, in~\S\ref{subsec:ram}, we only explain the modifications and omit the rest of the proofs. In~\S\ref{subsec:CM}, we show that these constructions also satisfy property $(2)$.

	\subsection{Representations of quaternion algebra}\label{sec:choosing-q}

		Given a fixed embedding $\iota\colon L \hookrightarrow \BBl$, the quaternion algebra $\BBl$ can be written uniquely as $\iota(L) \oplus \iota(L)j$, where $j \in \BBl$ is such that $j\iota(\alpha)j^{-1} = \iota(\alphabar)$, for all $\alpha \in L$.  Thus $j^2$ defines a unique element in $\Q^{\times}/\NN(L^{\times})$.  From now on, we will represent $\BBl$ as a sub-algebra of $\Mat_2(L)$ as follows.
		\begin{equation}\label{eq:BBl}
			\BBl = \left\{ [\alpha:\beta] :=
				\begin{pmatrix}
					\alpha & \beta \\
					j^2 \betabar & \alphabar
				\end{pmatrix}
				: \alpha, \beta \in L\right\}.
		\end{equation}
Under this representation, $\iota\colon L\into \BBl, \quad \iota(\alpha) = [\alpha, 0].$

		If $\ell$ is unramified in $\OO$ then we may assume that $j^2 = -\ell q$, where $q$ is a prime such that $-\ell q \in \ker \Psi$ and $q\nmid d$.  If $\ell$ is ramified, then we may assume that $j^2 = -q$ where $-q\in\ker\widehat{\Psi}_{\ell}$, $-q \not\in \ker \Psi_{\ell}$ and $q\nmid d$.  (Recall that $\Psi$, $\widehat{\Psi}_{\ell}$ and $\Psi_{\ell}$ were defined in \S\ref{sec:Background}.)  In both cases, these conditions imply that $q$ is split in $\OO$.

	\subsection{The inert case}\label{subsec:inert}

		Let $\fraka\subseteq \OO$ be an integral invertible ideal such that $\textup{gcd}(f, \Norm(\fraka))= 1$.  Let $\frakq$ be a prime ideal of $\OO$ lying over $q$.  For any $\lambda\in\OO$ such that
		\begin{enumerate}
			\item $\lambda\frakq^{-1}\frakabar\fraka^{-1}\subseteq\OO$, and
			\item $\Norm(\lambda) \equiv -\ell q\pmod{d}$,
		\end{enumerate}
		we define
		\[
			R(\fraka, \lambda) := \left\{ [\alpha, \beta] :
			 	\alpha \in \frakD^{-1},
				\beta \in
				\frakq^{-1}\ell^{n - 1}\frakD^{-1}\frakabar\fraka^{-1},
				\alpha - \lambda\beta \in \OO \right\}.
		\]
		From this definition, it is clear that if $\lambda'$ satisfies $(1)$ and $(2)$ and $\lambda \equiv \lambda' \pmod {\frakD}$, then $R(\fraka, \lambda) = R(\fraka, \lambda')$.  We claim that, for any $\fraka$ and $\lambda$ satisfying conditions $(1)$ and $(2)$, $R(\fraka,\lambda)$ is a maximal order.
		
		\begin{remark}
			Although Dorman~\cite{Dorman-Orders} does not include condition $(1)$ in his definition, it is, in fact, necessary.  Without this assumption $R(\fraka, \lambda)$ is not closed under multiplication, even if $d$ is squarefree.  This was already remarked on in~\cite{GL}.
		\end{remark}
		\begin{remark}
			Write $\lambda = \lambda_0 + \lambda_1\frac{d + \sqrt{d}}{2}$.  If $d$ is odd, then the congruence class of $\lambda \bmod \frakD$ is determined by $\lambda_0\bmod{d}$.  In addition, the condition that $\Norm(\lambda) \equiv -\ell q\pmod{d}$ is equivalent to the condition that $\lambda_0^2\equiv -\ell q \pmod{d}$.  Therefore, if $d$ is odd, then we may think of $\lambda$ as an integer, instead of as an element of $\OO$.  This was the point of view taken in~\cite{GZ-SingularModuli, Dorman-Orders}.
		\end{remark}

		\begin{lemma}
			$R(\fraka, \lambda)$ is an order.
		\end{lemma}
		\begin{proof}
			We will show that $R(\fraka, \lambda)$ is closed under multiplication.  All other properties are easily checked.  Consider
			\[
				\left[\frac{a_1}{\sqrt{d}}, \frac{b_1}{\sqrt{d}}\right],
				\left[\frac{a_2}{\sqrt{d}}, \frac{b_2}{\sqrt{d}}\right]
				\in R(\fraka, \lambda).
			\]
			Their product is in $R(\fraka, \lambda)$ if and only if
			\begin{enumerate}
				\item $a_1a_2 + \ell q b_1\overline{b}_2 \in \frakD$
				\item $a_1b_2 - \overline{a}_2b_1 \in
						\frakD\frakq^{-1}\frakabar\fraka^{-1}$
				\item $a_1a_2 + \ell q b_1\overline{b}_2 - \lambda a_1b_2 +
						\lambda\overline{a}_2b_1 \in d\OO$
			\end{enumerate}

			\textbf{Claim 1:} Note that $a_1a_2 + \ell q b_1\overline{b}_2$ can be rewritten as
			\begin{equation}\label{eqn:alpha-cond}
			 	(a_1 - \lambda b_1)a_2 + \lambda b_1(a_2 - \lambda b_2)
				+ \lambda b_1(\lambda b_2- \overline{\lambda}\overline{b}_2)
				+ (\Norm(\lambda) + \ell q)b_1\overline{b}_2.
			\end{equation}
			Using the definition of $R(\fraka, \lambda)$ and the fact that for any $c \in \calO$, $(c - \overline{c}) \in \frakD$, one can easily check that~\eqref{eqn:alpha-cond} is in $\frakD$.

			\textbf{Claim 2:} We rewrite $a_1b_2 - \overline{a}_2b_1$ as $(a_1 - \lambda b_1)b_2 - (\overline{a}_2 - \overline{\lambda{b}}_2)b_1 + b_1(\lambda b_2 - \overline{\lambda b}_2).$  From this description, one can easily check that $a_1b_2 - \overline{a}_2b_1\in\frakD\frakq^{-1}\frakabar\fraka^{-1}$.

			\textbf{Claim 3:} Let $a_i' \in \OO$ be such that $a_i = \lambda b_i + \sqrt{d}a_i'$.  Then we can rewrite $a_1a_2 + \ell q b_1\overline{b}_2 - \lambda a_1b_2 + \lambda\overline{a}_2b_1$ as
			\begin{align}
				&\lambda^2b_1b_2 + \sqrt{d}\lambda(b_1a_2' + a_1'b_2) + da_1'a_2' + \ell qb_1\overline{b}_2
				- \lambda^2 b_1b_2 - \lambda\sqrt{d}a_1'b_2
				+ \Norm(\lambda)\overline{b}_2b_1 - \lambda\sqrt{d}b_1\overline{a'}_2\\
				&= da_1'a_2' + (\Norm(\lambda) + \ell q)b_1\overline{b}_2
				+ \lambda\sqrt{d}b_1(a_2' - \overline{a}_2').
				\label{eqn:lambda-cond}
			\end{align}
			Similar arguments as above show that~\eqref{eqn:lambda-cond} is in $d\OO$.
		\end{proof}
		\begin{lemma}
			The discriminant of $R(\fraka, \lambda)$ is $\ell^2$, and so $R(\fraka, \lambda)$ is a maximal order.
		\end{lemma}
		\begin{proof}
			To prove this lemma, we will use an auxiliary (non-maximal) order
			\[
				\Rtilde(\fraka) := \left\{[\alpha, \beta] :
				\alpha \in \OO, \beta \in \frakabar\fraka^{-1} \right\}.
			\]
			One can easily check that this is an order.  Let $\omega_1, \omega_2$ be a $\Z$-basis for $\frakabar\fraka^{-1}$.  Then
			\[
				[1,0],\; [(d + \sqrt{d})/2, 0],\; [0, \omega_1],\; [0, \omega_2]
			\]
			is a $\Z$-basis for $\Rtilde(\fraka)$.  Using these basis elements, one can show that $\disc(\Rtilde(\fraka)) = (\ell q d)^2$.

			We claim that $\Rtilde(\fraka)$ is related to $R(\fraka, \lambda)$ by the following exact sequence,
			\[
				0 \to \Rtilde(\fraka) \to R(\fraka, \lambda)
				\stackrel{[\alpha,\beta]\mapsto\beta}{\longrightarrow}
				\frakq^{-1}\frakD^{-1}\frakabar\fraka^{-1}/
				\frakabar\fraka^{-1} \to 0.
			\]
			This exact sequence implies that $\left[R(\fraka, \lambda):\Rtilde(\fraka)\right] = q d$, and thus the lemma follows.

		\noindent\textit{Proof of claim:}
			Exactness on the left is straightforward.  Exactness on the right holds since for any $\beta\in\frakq^{-1}\frakD^{-1}\frakabar\fraka$, $[\lambda\beta, \beta]\in R(\fraka, \lambda)$.  The order $\Rtilde(\fraka)$ is clearly in $\ker([\alpha, \beta] \mapsto \beta)$, and the reverse containment follows since $\beta \in \frakabar\fraka^{-1}$ implies that $\lambda_{\fraka}\beta$, and hence $\alpha$, is in $\OO$.
		\end{proof}

		Given an ideal $\fraka$, we will now construct a $\lambda = \lambda_{\fraka}$ satisfying conditions $(1)$ and $(2)$.  Since we want our orders $R(\fraka) := R(\fraka, \lambda_{\fraka})$ to satisfy
		\[
			R(\fraka)\frakb = \frakb R(\fraka\frakb),
		\]
		the relationship between $\lambda_{\fraka}$ and $\lambda_{\fraka\frakb}$ will be quite important.  In fact, the relation
		\[
			R(\OO)\fraka = \fraka R(\fraka)
		\]
		shows that $R(\fraka)$ is determined from $R(\OO)$ and so $\lambda_{\fraka}\bmod{\frakD}$ is determined by $\lambda_{\OO}\bmod{\frakD}$.
		
		\subsubsection{Defining $\lambda_{\fraka}$}
		For all invertible ramified primes $p$, fix two elements $\lambda^{(p)}, \lambdatilde^{(p)}\in \OO$ with norm congruent to $-\ell q \bmod{p^{v(d)}}$ such that $\lambda^{(p)}\not\equiv \lambdatilde^{(p)}\pmod{\frakD_p}$.  For all non-invertible ramified primes $p$, fix $\lambda^{(p)}\in\OO$ such that $\Norm(\lambda^{(p)}) \equiv -\ell q \pmod{p^{v(d)}}$.

		For any prime ideal $\pp$ of $\OO$ that is prime to $\frakD$, let $M(\pp)$ denote a fixed integer that is divisible by $\Norm(\pp)$ and congruent to $1\pmod{d}$.  For any product of invertible ramified primes $\frakb_d := \prod_{\substack{\pp\textup{ invertible }\\\pp\mid\frakD}} \pp^{e_{\pp}}$, we write $\lambda_{\frakb_d}$ for any element in $\OO$ such that
		\[
			\lambda_{\frakb_d} \bmod{\frakD_p}\equiv
				\begin{cases}
					\lambda^{(p)} & \textup{if }e_p\equiv 0 \pmod 2\\
					\lambdatilde^{(p)} & \textup{if }e_p\equiv 1 \pmod 2
				\end{cases}
		\]
		for all invertible ramified primes $\pp$ and $\lambda_{\frakb_d} \equiv \lambda^{(p)} \pmod{\frakD_p}$ for all non-invertible primes.  These conditions imply that $\lambda_{\frakb_d}$ is well-defined modulo $\frakD$.
		
		Let $\fraka$ be an invertible integral ideal $\OO$ such that $\textup{gcd}(\Norm(\fraka), f) = 1$.  Then we may factor $\fraka$ as $\fraka'\fraka_d$, where $\fraka'$ is prime to the discriminant and $\fraka_d$ is supported only on invertible ramified primes.  We define $\lambda_{\fraka} := \left(\prod_{\pp|\fraka'}M(\pp)^{v_{\pp}(\fraka')}\right)M(\frakq)\lambda_{\fraka_d}$. Note that it follows from this definition that  $\lambda_{\fraka}$ is well-defined modulo $\frakD$ and, importantly, that $\lambda_{\fraka}$  satisfies $\lambda_{\fraka}\frakq^{-1}\frakabar\fraka^{-1} \subset \OO$ and $\Norm(\lambda_{\fraka}) \equiv -\ell q \mod d$.

		\begin{lemma}\label{lem:conj-inert}
			Let $\fraka, \frakb$ be two invertible ideals in $\OO$ that are prime to the conductor.  Assume that $\fraka$ and $\fraka\frakb$ are both integral.  Then
			\[
				R(\fraka, \lambda_{\fraka})\frakb = \frakb R(\fraka\frakb, \lambda_{\fraka\frakb}).
			\]
		\end{lemma}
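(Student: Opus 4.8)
The plan is to reduce the identity to a single inclusion, then to a purely local statement, and finally to a congruence between $\lambda_\fraka$ and $\lambda_{\fraka\frakb}$ which turns out to be automatic away from $d$ and, at the ramified primes, is controlled by the choices made in the construction of the $\lambda$'s. (The argument is insensitive to the value of $n$ in the definition of $R(\fraka,\lambda)$, the factor $\ell^{n-1}$ being conjugation-invariant, so I will take $n=1$, i.e. work with the maximal orders $R(\fraka)=R(\fraka,\lambda_\fraka)$.)

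\textbf{Step 1: reduce to one inclusion.} Recall from~\eqref{eq:BBl} that right multiplication of $[\alpha,\beta]$ by $\iota(c)=[c,0]$ yields $[\alpha c,\beta\overline{c}]$, left multiplication by $[c,0]$ yields $[c\alpha,c\beta]$, and hence conjugation by $[c,0]$ sends $[\alpha,\beta]$ to $[\alpha,(\overline{c}/c)\beta]$. I would first observe that it suffices to prove the single inclusion
\[
R(\fraka,\lambda_\fraka)\frakb \;\subseteq\; \frakb\, R(\fraka\frakb,\lambda_{\fraka\frakb})
\]
for every pair of invertible ideals $\fraka,\frakb$ prime to the conductor with $\fraka$ and $\fraka\frakb$ integral. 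Indeed, applying this with $(\fraka,\frakb)$ replaced by $(\fraka\frakb,\frakb^{-1})$ — which is legitimate since $\fraka\frakb$ and $(\fraka\frakb)\frakb^{-1}=\fraka$ are both integral — and then multiplying on both sides by $\frakb$ gives the reverse inclusion $\frakb\,R(\fraka\frakb,\lambda_{\fraka\frakb})\subseteq R(\fraka,\lambda_\fraka)\frakb$, hence equality.

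\textbf{Step 2: localize.} Both sides are $\OO$-lattices in $\BBl$, so the inclusion can be checked one prime $p$ at a time. Since $\frakb$ is invertible it is locally principal, say $\frakb_p=\gamma_p\OO_p$ with $\gamma_p\in(L\otimes\Q_p)^\times$. Writing $t_p:=\overline{\gamma_p}/\gamma_p$ (a generator of $(\frakbbar\frakb^{-1})_p$), the inclusion at $p$ is equivalent to $\iota(\gamma_p)^{-1}R(\fraka,\lambda_\fraka)_p\,\iota(\gamma_p)\subseteq R(\fraka\frakb,\lambda_{\fraka\frakb})_p$, i.e. to: for every $[\alpha,\beta]\in R(\fraka,\lambda_\fraka)_p$ one has $[\alpha,t_p\beta]\in R(\fraka\frakb,\lambda_{\fraka\frakb})_p$. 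Localizing the defining conditions of the $R(\cdot,\cdot)$, the condition on the first coordinate is unchanged; for the second coordinate one uses $\overline{(\fraka\frakb)}(\fraka\frakb)^{-1}=\frakabar\fraka^{-1}\cdot\frakbbar\frakb^{-1}$ and that $t_p$ generates $(\frakbbar\frakb^{-1})_p$, so it too holds; and for the coupling condition, subtracting the known relation $\alpha-\lambda_\fraka\beta\in\OO_p$ and using that every $\beta\in(\frakq^{-1}\frakD^{-1}\frakabar\fraka^{-1})_p$ occurs (pair it with $\alpha=\lambda_\fraka\beta$), everything comes down to the single congruence
\[
\lambda_\fraka \;\equiv\; \lambda_{\fraka\frakb}\,t_p \pmod{(\frakq\frakD\fraka\frakabar^{-1})_p}.
\]

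\textbf{Step 3: verify the congruence.} If $p\nmid d$, then $\frakD_p=\OO_p$ and the modulus is $(\frakq\fraka\frakabar^{-1})_p$; the congruence is then automatic from condition~$(1)$ on the two $\lambda$'s, since $\lambda_\fraka\in(\frakq\fraka\frakabar^{-1})_p$ while $\lambda_{\fraka\frakb}\in(\frakq(\fraka\frakb)\overline{(\fraka\frakb)}^{-1})_p$ and multiplying by the generator $t_p$ of $(\frakbbar\frakb^{-1})_p$ lands $\lambda_{\fraka\frakb}t_p$ in $(\frakq\fraka\frakabar^{-1})_p$ as well. If $p\mid d$, then $p\nmid f$, so $\OO$ is maximal at $p$ and the prime $\frakp$ over $p$ satisfies $\overline{\frakp}=\frakp$; hence $(\fraka\frakabar^{-1})_p=\OO_p$ and $(\frakq)_p=\OO_p$ (as $q\nmid d$), so the modulus collapses to $\frakD_p=\frakp^{\,v_p(d)}$. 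Here one computes $t_p\equiv(-1)^{v_p(\frakb)}\pmod{\frakD_p}$, and from the explicit definition of the $\lambda$'s the classes $\lambda_\fraka\bmod\frakD_p$ and $\lambda_{\fraka\frakb}\bmod\frakD_p$ are among $\{\lambda^{(p)},\lambdatilde^{(p)}\}$, determined by the parities of $v_p(\fraka)$ and of $v_p(\fraka)+v_p(\frakb)$ respectively; since $\lambda^{(p)},\lambdatilde^{(p)}$ both have norm $\equiv-\ell q$ and therefore reduce to the two (necessarily opposite) square roots of $-\ell q$ in $\OO_p/\frakD_p$, the congruence holds in both cases $v_p(\frakb)$ even and odd. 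The one leftover case is the ramified prime $p=2$ (the ``irregular'' case, where $\lambdatilde^{(2)}$ is not available), which I would settle by a short direct computation using $2\in\frakD_2$.

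\textbf{Main obstacle.} The genuinely non-formal part is Step~3 at the primes $p\mid d$: away from $d$ the abstract divisibility property~$(1)$ does all the work, but at a ramified prime $\lambda_\fraka$ itself depends on the ideal class through the alternation $\lambda^{(p)}\leftrightarrow\lambdatilde^{(p)}$ keyed to the parity of the exponent, and one must match this alternation precisely against the sign $(-1)^{v_p(\frakb)}$ produced by $t_p=\overline{\gamma_p}/\gamma_p$. The ramified prime $2$ is the most delicate sub-point, and is exactly why the construction distinguishes regular from irregular ramified primes.
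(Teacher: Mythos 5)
Your reduction is legitimate: cutting to a single inclusion, localizing, and showing that everything amounts to the congruence $\lambda_\fraka\equiv\lambda_{\fraka\frakb}\,t_p\pmod{(\frakq\frakD\fraka\frakabar^{-1})_p}$ is a correct reformulation, and at primes $p\nmid d$ your observation that condition $(1)$ alone forces this congruence is a genuine streamlining of the paper's argument, which instead reduces to $\frakb=\frakp$ prime and verifies the three membership conditions by hand using $\Norm(\frakp)\mid M(\frakp)$ and $M(\frakp)\equiv1\pmod d$; your odd ramified case likewise mirrors the paper's computation. The first real gap is the sentence ``if $p\mid d$ then $p\nmid f$'': this is false in the setting of this section, since $d$ is the discriminant of a possibly non-maximal order, so every prime dividing the conductor divides $d$ --- and treating non-maximal orders is precisely the point of the construction. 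At a prime $p\mid f$ one has $\fraka_p=\frakb_p=\frakq_p=\OO_p$ (the ideals are prime to the conductor), so you may take $t_p=1$, but $\frakD_p\neq\OO_p$, so you still owe the congruence $\lambda_\fraka\equiv\lambda_{\fraka\frakb}\pmod{\frakD_p}$. It does hold --- the two $\lambda$'s differ multiplicatively by a product of integers $M(\pp)\equiv1\pmod d$, and since $\frakb$ is prime to $f$ the parity data defining $\lambda_{\fraka_d}$ and $\lambda_{(\fraka\frakb)_d}$ modulo $\frakD_p$ are unchanged at such $p$ --- but your case analysis silently discards these primes, so this verification is missing.

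The second gap is the ramified prime $2$ (i.e.\ $2\mid d$, $2\nmid f$), which you defer to ``a short direct computation.'' Two problems: first, your identity $t_p\equiv(-1)^{v_p(\frakb)}\pmod{\frakD_p}$ fails at $p=2$, because $\sqrt{d}$ is no longer a uniformizer above $2$; already in $\Z[i]$ one has $t_2=\overline{(1+i)}/(1+i)=-i$, which is not $\pm1$ modulo $\frakD_2=2\Z[i]$. Second, $2$ is in general a \emph{regular} ramified prime, so the alternation $\lambda^{(2)}\leftrightarrow\lambdatilde^{(2)}$ is present and must be matched against $t_2$; attributing the difficulty to the irregular case, where the alternation is absent, misplaces it. This is exactly where the paper works hardest: it writes $\gamma\alpha-\lambda_{\fraka\frakp}\overline{\gamma}\beta$ as three terms, uses the normalization $v_{\frakp}(\lambda_{\fraka}-\lambda_{\fraka\frakp})=v_{\frakp}(\frakD)-1$ built into the choice of $\lambda^{(2)},\lambdatilde^{(2)}$, and exploits $\#\OO/\frakp=2$ so that a sum of two $\frakp$-adic units lies in $\frakp$. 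In your local formulation the analogous step is to show that $\gamma(\lambda_\fraka-\lambda_{\fraka\frakb})$ and $(\gamma-\overline{\gamma})\lambda_{\fraka\frakb}$ have the same $\frakp$-valuation, so that their sum gains the missing factor of $\frakp$; until that valuation bookkeeping is carried out, the lemma is unproved when the prime of $\OO$ above $2$ divides $\frakb$ to odd order.
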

		\begin{proof}
			We will show that $R(\fraka, \lambda_{\fraka})\frakb \subseteq \frakb R(\fraka\frakb, \lambda_{\fraka\frakb})$. The reverse containment then follows by letting $\fraka = \fraka\frakb$ and $\frakb = \frakb^{-1}$.  Note that if $\frakb = \frakb_1\frakb_2^{-1}$ with $\frakb_i$ integral, we may rewrite $\frakb$ as $\frakb_1\overline{\frakb_2}\Norm(\frakb_2)^{-1}$.  Since $\Norm(\frakb_2)^{-1}$ is in the center of $\BBl$ and $R(\fraka, \lambda_\fraka) = R(N\fraka, \lambda_{N\fraka})$ for any integer $N$, we may reduce to the case where $\frakb$ is integral.  We will also assume that $\frakb = \frakp$ is prime; the general result follows from multiplicativity.

			We can write
			\[
				\frakp R(\fraka\frakp, \lambda_{\fraka\frakp}) = \left\{
				[\tilde\alpha, \tilde\beta] :
                \tilde\alpha \in \frakp\frakD^{-1},
				\tilde\beta \in
                    \frakq^{-1}\frakD^{-1}\frakabar\overline{\frakp}
				\fraka^{-1},
				\tilde\alpha - \lambda_{\fraka\frakp}\tilde\beta
                    \in \frakp\right\}.
			\]
			Take $[\alpha, \beta]\in R(\fraka, \lambda_{\fraka})$ and $\gamma \in \frakp$; the product equals $[\gamma\alpha, \overline{\gamma}\beta]$.  One can easily see that $\gamma\alpha \in \frakp\frakD^{-1}$ and that $\overline{\gamma}\beta \in \frakq^{-1}\frakD^{-1}\frakabar\overline{\frakp}\fraka^{-1}$.  It  remains to show that $\gamma\alpha - \lambda_{\fraka\frakp}\overline{\gamma}\beta \in \frakp$.

			Consider the case where $\frakp$ is unramified.  Since {$\lambda_{\fraka\frakp} = M(\frakp)\lambda_{\fraka}$},
we may rewrite $\gamma\alpha - \lambda_{\fraka\frakp}\overline{\gamma}\beta$ as
			\[
				(\gamma - \overline{\gamma}M(\frakp))\alpha + \overline{\gamma}M(\frakp)(\alpha - \lambda_{\fraka}\beta).
			\]
			Since $p|M(\pp)$ and $M(\frakp)\equiv 1\pmod{d}$, one can easily check that $(\gamma - \overline{\gamma}M(\pp))\alpha$ and $\overline{\gamma}M(\pp)(\alpha - \lambda_{\fraka}\beta)$ are in $\frakp.$

			Now consider the case where $\frakp$ is ramified.  Then $\lambda_{\fraka} - \lambda_{\fraka\frakp} \in\frakD_{\pp'}$ for all ramified $\pp'\neq\frakp$, so it is clear that $v_{\pp'}(\gamma\alpha - \lambda_{\fraka\frakp}\overline{\gamma}\beta)\geq0$ for all $\pp'\neq\pp$.  It remains to show that $v_{\frakp}(\gamma\alpha - \overline{\gamma}\lambda_{\fraka\frakp}\beta)\geq 1 $.
			
			First consider the case when $\frakp|2$.  Rewrite $\gamma\alpha - \lambda_{\fraka\frakp}\overline{\gamma}\beta$ as
	\begin{equation}\label{eq:lambda_ap-inert}			
	\gamma(\alpha - \lambda_{\fraka}\beta) + \gamma(\lambda_{\fraka} - \lambda_{\fraka\frakp})\beta + \lambda_{\fraka\frakp}\beta(\gamma - \overline{\gamma}).
	\end{equation}
From the definition of $\lambda_{\fraka}$, we see that $v_{\frakp}(\lambda_{\fraka} - \lambda_{\fraka\frakp}) = v_{\frakp}(\frakD) - 1$.  Thus $\gamma(\lambda_{\fraka} - \lambda_{\fraka\frakp})\beta$ is a $\pp$-adic unit if  $v_{\frakp}(\beta) = - v_{\frakp}(\frakD)$ and $v_{\frakp}(\gamma) = 1$, and in $\frakp$ otherwise.  Moreover, the same characterization holds for $\lambda_{\fraka\frakp}\beta(\gamma - \overline{\gamma})$.  Since $\#\OO/\pp = 2$, a sum of two $\frakp$-adic units is in $\frakp$.  Hence, we conclude that $v_{\frakp}(\gamma\alpha - \overline{\gamma}\lambda_{\fraka\frakp}\beta)\geq 1 $.
			
			If $p$ is odd, then $v_{\frakp}(\beta) \geq -1$.  Combining the last two terms in  equation~\ref{eq:lambda_ap-inert},
we see that we need to prove that
$v_{\frakp}((\gamma\lambda_{\fraka} - \overline{\gamma}\lambda_{\fraka\frakp})\beta)\geq 1$,
so it suffices to prove that
$v_{\frakp}(\gamma\lambda_{\fraka} - \overline{\gamma}\lambda_{\fraka\frakp})\geq 2$.
			From the definition of $\lambda_{\fraka}$, we see that $\lambda_{\fraka} + \lambda_{\fraka\frakp}\in\frakp,$ and since $\gamma\in\frakp$, $\gamma+\overline{\gamma}\in \pp^2$.  Thus $\gamma(\lambda_{\fraka} + \lambda_{\fraka\frakp}) - (\gamma + \overline{\gamma})\lambda_{\fraka\frakp} = \gamma\lambda_{\fraka} - \overline{\gamma}\lambda_{\fraka\frakp}$ is in $\frakp^2$.
		\end{proof}

	\subsection{The ramified case}\label{subsec:ram}
	Let $\fraka\subseteq \OO$ be an integral invertible ideal such that $\textup{gcd}(f, \Norm(\fraka))= 1$.  Let $\frakq$ be a prime ideal of $\OO$ lying over $q$.  For any $\lambda\in\OO$ such that
	\begin{enumerate}
		\item $\lambda\frakq^{-1}\frakabar\fraka^{-1}\subseteq\OO$, and
		\item $\Norm(\lambda) \equiv - q\pmod{d/\ell}$,
	\end{enumerate}
	we define
	\[
		R(\fraka, \lambda) := \left\{ [\alpha, \beta] :
		 	\alpha \in \frakl\frakD^{-1},
			\beta \in \frakq^{-1}\frakl\frakD^{-1}\frakabar\fraka^{-1},
			\alpha - \lambda\beta \in \OO \right\}.
	\]
	From this definition, it is clear that if $\lambda'$ satisfies $(1)$ and $(2)$ and $\lambda \equiv \lambda' \pmod {\frakD\frakl^{-1}}$, then $R(\fraka, \lambda) = R(\fraka, \lambda')$.  We claim that, for any $\fraka$ and $\lambda$, $R(\fraka,\lambda)$ is a maximal order.
		\begin{lemma}
			$R(\fraka, \lambda)$ is an order.
		\end{lemma}
		\begin{proof}
			We will show that $R(\fraka, \lambda)$ is closed under multiplication.  All other properties are easily checked.  Consider
			\[
				\left[\frac{a_1}{\sqrt{d}}, \frac{b_1}{\sqrt{d}}\right],
				\left[\frac{a_2}{\sqrt{d}}, \frac{b_2}{\sqrt{d}}\right]
				\in R(\fraka, \lambda).
			\]
			Their product is in $R(\fraka, \lambda)$ if and only if
			\begin{enumerate}
				\item $a_1a_2 +  q b_1\overline{b}_2 \in \frakl\frakD$
				\item $a_1b_2 - \overline{a}_2b_1 \in
						\frakl\frakD\frakq^{-1}\frakabar\fraka^{-1}$
				\item $a_1a_2 + q b_1\overline{b}_2 - \lambda a_1b_2 +
						\lambda\overline{a}_2b_1 \in d\OO$
			\end{enumerate}
			The proof of these claims goes through exactly as in the inert case, after replacing every $q$ in the inert case with $q/\ell$, and after noting that $a_i, \overline{a}_i\in\frakl$ and $b_i, \overline{b}_i\in\frakl\frakq^{-1}\frakabar\fraka^{-1}$.
		\end{proof}
		\begin{lemma}
			The discriminant of $R(\fraka, \lambda)$ is $\ell^2$, and so $R(\fraka, \lambda)$ is a maximal order.
		\end{lemma}
		\begin{proof}
			This proof is exactly the same as in the inert case after replacing $q$ with $q/\ell$.
		\end{proof}

		\subsubsection{Defining $\lambda_\fraka$}
		For all invertible ramified primes $p$, fix two elements $\lambda^{(p)}, \lambdatilde^{(p)}\in \OO$ with norm congruent to $- q \bmod{p^{v(d/\ell)}}$ such that $\lambda^{(p)}\not\equiv \lambdatilde^{(p)}\pmod{\frakD_p}$.  If $p =\ell$ and $\ell \ne 2$, then in addition we assume that $\lambda_{\ell,0} = -\lambda_{\ell, 1}$.For all non-invertible ramified primes $p$, fix $\lambda^{(p)}\in\OO$ such that $\Norm(\lambda^{(p)}) \equiv -q \pmod{p^{v(d/\ell)}}$.

		For any prime ideal $\pp$ of $\OO$ that is coprime to $\frakD$, let $M(\pp)$ denote a fixed integer that is divisible by $\Norm(\pp)$ and congruent to $1\pmod{d}$.  For any product of invertible ramified primes $\frakb_d := \prod_{\substack{\pp\textup{ invertible }\\\pp\mid\frakD}} \pp^{e_{\pp}}$, we write $\lambda_{\frakb_d}$ for any element in $\OO$ such that
		\[
			\lambda_{\frakb_d} \bmod{\frakD_p}\equiv
				\begin{cases}
					\lambda^{(p)} & \textup{if }e_p\equiv 0 \pmod 2,\\
					\lambdatilde^{(p)} & \textup{if }e_p\equiv 1 \pmod 2.
				\end{cases}
		\]
		for all invertible ramified primes $\pp$ and $\lambda_{\frakb_d} \equiv \lambda^{(p)} \pmod{\frakD_p}$ for all non-invertible primes.  These conditions imply that $\lambda_{\frakb_d}$ is well-defined modulo $\frakD$.
		
		Let $\fraka$ be an invertible integral ideal $\OO$ such that $(\Norm(\fraka), f) = 1$.  Then we may factor $\fraka$ as $\fraka'\fraka_d$, where $\fraka'$ is coprime to the discriminant and $\fraka_d$ is supported only on invertible ramified primes.  We define $\lambda_{\fraka} := \left(\prod_{\pp|\fraka'}M(\pp)^{v_{\pp}(\fraka')}\right)M(\frakq)\lambda_{\fraka_d}$.  Note that $\lambda_{\fraka}$ is well-defined modulo $\frakD$ and that $\lambda_{\fraka}$ satisfies $\lambda_{\fraka}\frakq^{-1}\frakabar\fraka^{-1} \subset \OO$ and $\Norm(\lambda_{\fraka}) \equiv - q \mod d/\ell$.

		\begin{remark}
			Since $\lambda_{\fraka}\equiv\lambda_{\fraka\frakl}\pmod{\frakD\frakl^{-1}}$ for any integral invertible ideal $\fraka$, the corresponding orders $R(\fraka)$, $R(\fraka\frakl)$ are equal.  This is not surprising, since $E(\fraka) \isom E(\fraka\frakl)$ modulo $\pi$.
		\end{remark}

		\begin{lemma}\label{lem:conj-ram}
			Let $\fraka, \frakb$ be two invertible ideals in $\OO$ that are coprime to the conductor.  We assume that $\fraka$ and $\fraka\frakb$ are integral.  Then
			\[
				R(\fraka, \lambda_{\fraka})\frakb = \frakb R(\fraka\frakb, \lambda_{\fraka\frakb}).
			\]
		\end{lemma}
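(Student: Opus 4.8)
The plan is to mirror the proof of Lemma~\ref{lem:conj-inert} as closely as possible, isolating the finitely many places where the extra factor $\frakl$ in the definition of $R(\fraka,\lambda)$ and the ramification of $\ell$ force a genuinely new argument. As in the inert case, it suffices to prove the containment $R(\fraka,\lambda_{\fraka})\frakb\subseteq\frakb R(\fraka\frakb,\lambda_{\fraka\frakb})$, since the reverse follows by replacing $(\fraka,\frakb)$ with $(\fraka\frakb,\frakb^{-1})$; and, writing $\frakb=\frakb_1\overline{\frakb_2}\Norm(\frakb_2)^{-1}$ and using that $\Norm(\frakb_2)$ is central in $\BBl$ and that $R(\fraka,\lambda_{\fraka})=R(N\fraka,\lambda_{N\fraka})$ for any integer $N$, one reduces first to $\frakb$ integral and then, by multiplicativity, to $\frakb=\frakp$ prime. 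Unwinding the definitions,
\[
	\frakp R(\fraka\frakp,\lambda_{\fraka\frakp}) = \left\{[\alpha,\beta] : \alpha\in\frakp\frakl\frakD^{-1},\ \beta\in\frakq^{-1}\frakl\frakD^{-1}\frakabar\overline{\frakp}\fraka^{-1},\ \alpha-\lambda_{\fraka\frakp}\beta\in\frakp\right\},
\]
and for $[\alpha,\beta]\in R(\fraka,\lambda_{\fraka})$ and $\gamma\in\frakp$ the product $[\gamma\alpha,\overline{\gamma}\beta]$ visibly lies in the correct $\alpha$- and $\beta$-modules, so the only point is to check $\gamma\alpha-\lambda_{\fraka\frakp}\overline{\gamma}\beta\in\frakp$.

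When $\frakp$ is unramified, or ramified with $\frakp\neq\frakl$, this verification is line-for-line the corresponding case of Lemma~\ref{lem:conj-inert}: one replaces every $q$ by $q/\ell$, exactly as in the ``order'' lemmas above, and one observes that the extra factor $\frakl$ shifts the $\frakl$-valuations of $\alpha$ and of $\beta$ by the same constant and so is irrelevant to an estimate made at the prime $\frakp$. In particular the split into the subcases $\frakp\mid2$ and $p$ odd, and the use of the facts $M(\frakp)\equiv1\pmod d$, $\gamma-\overline{\gamma}\in\frakD$, $\gamma+\overline{\gamma}\in\frakp^{2}$, and $\#\OO/\frakp=2$, all carry over verbatim.

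The genuinely new case is $\frakp=\frakl$. Here the remark preceding the lemma gives $\lambda_{\fraka}\equiv\lambda_{\fraka\frakl}\pmod{\frakD\frakl^{-1}}$, hence $R(\fraka\frakl,\lambda_{\fraka\frakl})=R(\fraka,\lambda_{\fraka})$, and since $\frakl=\overline{\frakl}$ the assertion reduces to the commutation $R(\fraka,\lambda_{\fraka})\frakl=\frakl R(\fraka,\lambda_{\fraka})$. For $\gamma\in\frakl$ and $[\alpha,\beta]\in R(\fraka,\lambda_{\fraka})$ one writes
\[
	\gamma\alpha-\lambda_{\fraka\frakl}\overline{\gamma}\beta = \gamma(\alpha-\lambda_{\fraka}\beta) + \lambda_{\fraka}\beta(\gamma-\overline{\gamma}) + \overline{\gamma}(\lambda_{\fraka}-\lambda_{\fraka\frakl})\beta,
\]
the first term lying in $\frakl\OO=\frakl$ because $\alpha-\lambda_{\fraka}\beta\in\OO$, while the other two are shown to have $\frakl$-valuation $\geq1$ using $\gamma-\overline{\gamma}\in\frakD$, $\overline{\gamma}\in\overline{\frakl}=\frakl$, $\lambda_{\fraka}-\lambda_{\fraka\frakl}\in\frakD\frakl^{-1}$, and the lower bound on $v_{\frakl}(\beta)$ coming from $\beta\in\frakq^{-1}\frakl\frakD^{-1}\frakabar\fraka^{-1}$. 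This is where the specific choices of the $\lambda^{(p)}$ at the ramified primes, in particular the normalization $\lambda_{\ell,0}=-\lambda_{\ell,1}$ made at $p=\ell$ when $\ell$ is odd, must be used; and when $\ell=2$ the $\frakl$-arithmetic and the shape of $\frakD$ at $2$ differ, so that subcase is handled separately along the lines of the $\frakp\mid2$ subcase above, again exploiting $\#\OO/\frakl=2$.

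I expect the main obstacle to be precisely this $\frakp=\frakl$ case: verifying the congruence $\lambda_{\fraka}\equiv\lambda_{\fraka\frakl}\pmod{\frakD\frakl^{-1}}$ and correctly tracking $\frakl$-valuations when $\frakl$ divides $\fraka$ itself and when $\ell=2$. The remaining cases are a mechanical transcription of Lemma~\ref{lem:conj-inert} with $q$ replaced by $q/\ell$, so the only real work lies in this one step.
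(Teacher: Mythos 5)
Your proposal is correct and follows the paper's proof essentially step for step: the same reduction to a prime ideal $\frakb$, the same verbatim appeal to the inert case for $\frakp\neq\frakl$, and for $\frakp=\frakl$ the same three-term rewriting of $\gamma\alpha-\lambda_{\fraka\frakl}\overline{\gamma}\beta$ controlled by $\alpha-\lambda_{\fraka}\beta\in\OO$, $\gamma-\overline{\gamma}\in\frakD$, and the congruence $\lambda_{\fraka}\equiv\lambda_{\fraka\frakl}\pmod{\frakD\frakl^{-1}}$ (the paper phrases this as $v_{\frakl}(\lambda_{\fraka}-\lambda_{\fraka\frakl})=v_{\frakl}(\frakD)-1$). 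The only difference is cosmetic: the paper needs no separate $\ell=2$ subcase at $\frakp=\frakl$, since each of the three terms individually has $\frakl$-valuation at least $1$, and your preliminary reduction to the commutation $R(\fraka,\lambda_{\fraka})\frakl=\frakl R(\fraka,\lambda_{\fraka})$ is harmless but not needed.
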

		\begin{proof}
			As in the proof of Lemma~\ref{lem:conj-inert}, it suffices to prove that $R(\fraka, \lambda_{\fraka})\frakb\subset\frakb R(\fraka\frakb, \lambda_{\fraka\frakb})$ and we can reduce to the case where $\frakb$ is prime.
			
			We can write
			\[
				\pp R(\fraka\pp, \lambda_{\fraka\pp}) =
                \left\{[\tilde\alpha,\tilde\beta]:
				\tilde\alpha \in \pp\frakl\frakD^{-1},
				\tilde\beta \in
                \frakq^{-1}\frakl\frakD^{-1}\frakabar\ppbar\fraka^{-1},
				\tilde\alpha - \lambda_{\fraka\pp}\tilde\beta\in\pp\right\}.
			\]
			Take $[\alpha,\beta]\in R(\fraka, \lambda_{\fraka})$ and $\gamma \in \frakp$: the product equals $[\gamma\alpha , \overline{\gamma}\beta]$.  One can easily see that $\gamma\alpha \in \pp\frakl\frakD^{-1}$ and that $\overline{\gamma}\beta\in \frakq^{-1}\frakl\frakD^{-1}\frakabar\ppbar\fraka^{-1}$.  It remains to show that $\gamma\alpha - \lambda_{\fraka\pp}\overline{\gamma}\beta \in \pp$.
			
			We will focus on the case where $\frakb = \frakl$; if $\frakb \ne \frakl$, then the proof is exactly as in Lemma~\ref{lem:conj-inert}.  We can rewrite $\gamma\alpha - \lambda_{\fraka\frakl}\overline{\gamma}\beta$ as
			\[
				\gamma(\alpha - \lambda_{\fraka}\beta) +
				\gamma\beta(\lambda_{\fraka} - \lambda_{\fraka\frakl}) +
				\lambda_{\fraka\frakl}\beta(\gamma - \overline{\gamma}).
			\]
			It is straightforward to see that the first and third terms are in $\frakl$.  The second term is in $\frakl$ since $v_{\frakl}(\lambda_{\fraka} - \lambda_{\fraka\frakl}) = v_{\frakl}(\frakD) - 1$.  This completes the proof.
		\end{proof}

	\subsection{Elliptic curves with complex multiplication}\label{subsec:CM}

		\begin{lemma}\label{lem:formofmaxlorders}
			Let $R$ be a maximal order of $\BBl$ such that $R\cap L = \OO$, where the intersection takes place using the embedding of $\BBl \subset \Mat_2(L)$ given in~\eqref{eq:BBl}.  Then there is an integral invertible ideal $\fraka \subseteq \OO$ coprime to the conductor such that $R$ is conjugate to $R(\fraka, \lambda_{\fraka})$ by an element of $L^{\times}$.
		\end{lemma}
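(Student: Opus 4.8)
The statement classifies maximal orders $R \subset \BBl$ with $R \cap L = \OO$ up to $L^\times$-conjugacy: every such $R$ is conjugate to one of the $R(\fraka, \lambda_\fraka)$. The natural approach is to show first that $R$ is conjugate (by an element of $L^\times$) to an order of the shape $R(\fraka, \lambda)$ for \emph{some} invertible ideal $\fraka$ coprime to the conductor and \emph{some} admissible $\lambda$, and then to invoke the fact that the constructed system $\{\lambda_\fraka\}$ exhausts all admissible congruence classes of $\lambda$ modulo $\frakD$ (respectively $\frakD\frakl^{-1}$ in the ramified case), so that $R(\fraka,\lambda) = R(\fraka, \lambda_{\fraka'})$ for a suitable $\fraka'$ in the same ideal class.

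First I would decompose $R$ as an $\OO$-module. Using the presentation $\BBl = L \oplus Lj$ from~\eqref{eq:BBl} and the hypothesis $R \cap L = \OO$, write $R = \{[\alpha,\beta] : \alpha \in \frakc, \beta \in \frakb, (\text{congruence conditions})\}$ where $\frakc \supseteq \OO$ is the projection of $R$ to the first coordinate and $\frakb$ is the projection to the second; both are fractional $\OO$-ideals since $R$ is an $\OO$-module (via left multiplication by $\iota(L)$). After conjugating by a suitable element of $L^\times$ — which rescales $\beta$ by $\gamma/\overline\gamma$ and hence rescales the ideal $\frakb$ within its $L^\times$-orbit — I can normalize $\frakb$ to be of the form $\frakq^{-1}\ell^{n-1}\frakD^{-1}\frakabar\fraka^{-1}$ (inert case) or the ramified analogue, identifying the relevant ideal $\fraka$. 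The maximality of $R$, i.e.\ $\disc(R) = \ell^2$, together with the computation of the discriminant of the "model" order in terms of $\frakc$, $\frakb$ and the congruence module, forces $\frakc = \frakD^{-1}$ (resp.\ $\frakl\frakD^{-1}$) and forces the congruence condition relating $\alpha$ and $\beta$ to be governed by a single element $\lambda \in \OO$; that $\Norm(\lambda) \equiv -\ell q \pmod d$ (resp.\ $\equiv -q \pmod{d/\ell}$) comes from requiring closure under multiplication, exactly the computation reverse-engineered from the "Claim 1/2/3" analysis in the proof that $R(\fraka,\lambda)$ is an order. Condition $(1)$ on $\lambda$, namely $\lambda\frakq^{-1}\frakabar\fraka^{-1} \subseteq \OO$, is equivalent to $R$ being closed under multiplication and so is automatic.

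Once $R$ is shown to equal $R(\fraka, \lambda)$ for some admissible $\lambda$, I finish by matching $\lambda$ to the constructed family. The key input is the remark (in the ``Defining $\lambda_\fraka$'' subsections) that $\lambda_\fraka \bmod \frakD$ is determined by $\lambda_\OO \bmod \frakD$ via the conjugation relations $R(\OO)\fraka = \fraka R(\fraka)$ — more precisely, that for a fixed ideal class and fixed norm, the elements $\lambda_{\fraka'}$ as $\fraka'$ ranges over integral representatives realize all the admissible residues modulo $\frakD$ (the freedom coming from the choices $\lambda^{(p)}$ vs.\ $\widetilde\lambda^{(p)}$ at regular ramified primes, and from multiplying by the integers $M(\pp)$, which are $\equiv 1 \pmod d$). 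So $R(\fraka,\lambda) = R(\fraka, \lambda_{\fraka'})$ for an $\fraka'$ in the class of $\fraka$ with $\fraka' = \fraka\frakb$ for suitable $\frakb$, and then Lemma~\ref{lem:conj-inert} (resp.\ Lemma~\ref{lem:conj-ram}) gives $R(\fraka',\lambda_{\fraka'}) = \frakb^{-1} R(\fraka, \lambda_\fraka)\frakb$ — wait, more simply: $R(\fraka,\lambda)$ is already in the required form, so it is $R(\fraka, \lambda_\fraka)$ up to relabeling the ideal.

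\textbf{Main obstacle.} The hard part is the discriminant/maximality bookkeeping: pinning down that $\frakc$ must be exactly $\frakD^{-1}$ (and the exponent of $\frakl$ in the ramified case) and that the $\alpha$–$\beta$ congruence is cut out by one element $\lambda$ rather than a more general sublattice condition. This requires carefully redoing the exact-sequence / index computation from the ``$\disc(R(\fraka,\lambda)) = \ell^2$'' lemma \emph{in reverse}: starting from an arbitrary $R$ with $\disc = \ell^2$, one must show the only way to achieve index $qd$ over the auxiliary order $\Rtilde$ is the prescribed shape. The ramified case at $p = 2$ is likely to need the same kind of delicate $2$-adic argument (counting residues, using $\#\OO/\frakl = 2$) that appears in the closure-under-multiplication proofs, and — as the authors flag elsewhere — is where one has to be most careful; but since Lemma~\ref{lem:formofmaxlorders} is stated under the blanket hypothesis $\ell \nmid f$ with no extra restriction at $2$, presumably the argument can be pushed through, and I would isolate the $p=2$ congruence verification as a separate sub-claim mirroring equation~\eqref{eq:lambda_ap-inert}.
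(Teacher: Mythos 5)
Your route is genuinely different from the paper's, and as written it has real gaps precisely at the steps you defer. The paper does not classify $R$ by hand at all: it observes that $R$ and $R(\OO,\lambda_{\OO})$ both contain $\OO$ optimally embedded, invokes Eichler's theorem (Thm.~4, p.~118 of \emph{Lectures on modular correspondences}) to get an invertible $\OO$-ideal $\fraka'$ with $R=\fraka'^{-1}R(\OO)\fraka'$, writes $\fraka'=\theta\fraka$ with $\theta\in L^{\times}$ and $\fraka$ integral and coprime to the conductor, and then applies Lemmas~\ref{lem:conj-inert} and~\ref{lem:conj-ram} to turn ideal conjugation into $R=\theta^{-1}R(\fraka,\lambda_{\fraka})\theta$. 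Your plan is, in effect, to reprove this transitivity statement in coordinates, and the parts you label ``the main obstacle'' are exactly its content, so the proposal does not yet constitute a proof.

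Concretely, three steps would need real arguments and one of them is false as stated. First, the normalization of the second-coordinate ideal: conjugating by $\gamma\in L^{\times}$ rescales it only by $\gamma/\overline{\gamma}$, i.e.\ changes its class only within the squares $2\Pic(\OO)$; so the assertion that you can bring it to the form $\frakq^{-1}\frakD^{-1}\frakabar\fraka^{-1}$ (whose class runs over the single coset $[\frakq\frakD]^{-1}\cdot 2\Pic(\OO)$) is a genus-theoretic statement tied to the maximality of $R$ and to the specific choice of $q$ with $-\ell q\in\ker\Psi$; it is not a free normalization and you give no argument for it. Second, that the congruence between the two coordinates of a maximal order is cut out by a \emph{single} element $\lambda$ with $\Norm(\lambda)\equiv-\ell q\pmod d$ is precisely the reverse discriminant/index computation you acknowledge but do not carry out (and it is delicate at $2$ and at ramified primes). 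Third, your matching step relies on the claim that the family $\{\lambda_{\fraka'}\}$ realizes \emph{all} admissible residues modulo $\frakD$; this is not true: at the irregular ramified primes the construction fixes a single residue class $\lambda^{(p)}$, so only a proper subset of admissible $\lambda$'s occurs, and distinct admissible classes of $\lambda$ give genuinely different orders $R(\fraka,\lambda)$. Showing that the order you produce nevertheless lands in the constructed family (for a possibly different ideal in the same class) is again exactly the transitivity that the paper gets from Eichler; the closing ``so it is $R(\fraka,\lambda_{\fraka})$ up to relabeling the ideal'' is circular without it.
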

		\begin{proof}
			Since $R$ is a maximal order in $\BBl$, $\OO$ must be maximal at $\ell$~\cite[Chap. 2, Lemma 1.5]{Vigneras}.  Therefore we can define $R(\OO, \lambda_{\OO})$ as in \S\ref{subsec:inert} or \S\ref{subsec:ram} depending on whether $\ell$ is inert or ramified in $\OO$.  Since $R(\OO)$ and $R$ both have $\OO$ optimally embedded, by~\cite[Thm. 4, p.118]{Eichler}, there exists an invertible ideal $\fraka'$ such that $R = \fraka'^{-1}R(\OO)\fraka'$.  We may write $\fraka'$ as $\theta\fraka$, where $\theta \in L^{\times}$ and $\fraka$ is integral and coprime to the conductor.  Thus, Lemmas~\ref{lem:conj-inert} and~\ref{lem:conj-ram} show that $R = \theta^{-1}R(\fraka, \lambda_{\fraka})\theta$.
			
		\end{proof}
	
		Fix an element $[\tau^{(0)}]$ of discriminant $d$, and let $E = E(\tau^{(0)})$ be an elliptic curve over $\WW$ with $j(E) = j(\tau^{(0)})$ and good reduction at $\pi$.  Then we have an optimal embedding of $\OO\isom\End(E)$ into $\End_{\WW/\pi}(E)$, a maximal order in $\BBl$.  Thus, by Lemma~\ref{lem:formofmaxlorders}, there is an element $[\fraka_0]\in\Pic(\OO)$ such that the pair
		\[
			\left(\End(E\bmod\frakl),
			\iota\colon \OO = \End(E)\hookrightarrow \End(E\bmod\frakl)\right)
		\]
		is conjugate to $R(\fraka_0)$ with the diagonal embedding $\OO\hookrightarrow R(\fraka_0)$.  Now let $\sigma\in\Gal(H/L)$ and consider the pair
		\[
			\left(\End(E^\sigma\bmod\frakl),
			\iota\colon \OO\hookrightarrow \End(E^\sigma\bmod\frakl)\right).
		\]
		By class field theory, $\Gal(H/L)\isom \Pic(\OO)$; let $\fraka = \fraka_\sigma$ be an invertible ideal that corresponds to $\sigma$; note that $\fraka$ is unique as an element of $\Pic(\OO)$.  We assume that $\fraka$ is integral and coprime to the conductor.  Since $\Hom(E^{\sigma}, E)$ is isomorphic to $\fraka$ as a left $\End(E)$-module, we have $\End(E^\sigma\bmod\frakl)=\fraka\End(E)\fraka^{-1}$\cite[Chap. XIII]{CasselsFroehlich}.  Thus, by Lemmas~\ref{lem:conj-inert} and~\ref{lem:conj-ram}, the pair corresponding to $E^{\sigma}$ is conjugate to $R(\fraka_0\frakabar)$.
		
		Now define
		\begin{align*}
				R_n(\fraka) := &
				\left\{ [\alpha, \beta] :
				 	\alpha \in \frakD^{-1},
					\beta \in
					\frakq^{-1}\ell^{n - 1}\frakD^{-1}\frakabar\fraka^{-1},
					\alpha - \lambda_{\fraka}\beta \in \OO \right\},
					\quad\textup{if }\ell\nmid d,\\
				R_n(\fraka) := &
				\left\{ [\alpha, \beta] :
				 	\alpha \in \frakD^{-1},
					\beta \in
					\frakq^{-1}\frakl^n\frakD^{-1}\frakabar\fraka^{-1},
					\alpha - \lambda_{\fraka}\beta \in \OO \right\},
					\quad\quad\textup{if }\ell| d.\\
		\end{align*}
		One can easily check that $R_1(\fraka) = R(\fraka)$, that $\bigcap_nR_n(\fraka) = \OO$ and that
			\begin{equation}\label{eq:Rnprop-inert}
				R_n(\fraka) =
					\begin{cases}
						\OO + \ell^{n - 1}R_1(\fraka)
				 & \textup{if }\ell\nmid d, \\
				\OO + \frakl^{n - 1}R_1(\fraka), &\textup{if }\ell|d.
			\end{cases}
			\end{equation}
		Then by~\cite[Prop. 3.3]{Gross-CanonicalLifts}, $\End_{\WW/\pi^n}(E^\sigma) \isom R_n(\fraka_0\frakabar)$	 .

\section{Proof of Theorem~\ref{thm:DeterminingSnm}}\label{sec:Main}
\label{sec:CountingEndomorphisms}

	Retain the notation from~\S\ref{subsec:DetailedOutline} and let  $S_{n,m}(E)$ denote $S_{n,m}(E/\WW)$.  In this section, we prove Theorem~\ref{thm:DeterminingSnm}, which we restate here for the reader's convenience.

	\begin{theorem_nonum}
		Assume that $\ell\nmid f_1$ and that $m\neq 0$.  Then $\sum_{\tau_1}\#S_{n,m}(E(\tau_1)/\WW)$ is equal to an explicitly computable weighted sum of the number of certain invertible ideals of norm $\ell^{-r}m$, where $r = 2n - 1$ if $\ell\nmid d_1$ and $r = n$ otherwise.  If, in addition, $m$ and $f_1$ are relatively prime, then
		\[
			\sum_{\tau_1}\#S_{n,m}(E(\tau_1)/\WW) = \frac{Cw_1}{2}\rho(m)\frakA(\ell^{-r}m),
		\]
		where $C = 1$ if $4m = d_1d_2$ and $C=2$ otherwise,
		\begin{align*}
			\rho(m) = &
				\begin{cases}
					0 &\textup{if } (d_1, -m)_p = -1 \textup{ for }p|d_1, p \nmid f_1\ell,\\
					2^{\#\{p|(m, d_1) : p\nmid f_2
					\textup{ or } p= \ell\}} & \textup{otherwise}.
				\end{cases}
		\end{align*}
		and
		\begin{align*}
			\frakA(N) = & \#\left\{
				\begin{array}{ll}
					& \Norm(\frakb) = N, \frakb \textup{ invertible},\\
					\frakb\subseteq\OO_{d_1} : & p\nmid\frakb
					\textup{ for all }p |\textup{gcd}(N, f_2), p\nmid \ell d_1\\
					& \frakp^3\nmid\frakb\textup{ for all }
					\frakp|p|\textup{gcd}(N, f_2, d_1), p\ne\ell
				\end{array}
				\right\}.
		\end{align*}
	\end{theorem_nonum}
\begin{remark}	
In the general case, i.e. if $m$ is not coprime to $f_1$, $\sum_{\tau_1}\#S_{n,m}(E(\tau_1))$ is still computable; in fact, the proof provides an algorithm.
\end{remark}
	\begin{proof}
		If $\ell$ is split in $\OO_{d_1}$, then the Hilbert symbol $(d_1, -m)_{\ell} = 1$.  Therefore, by the proof of Theorem~\ref{thm:DefnOfFm}, $\sum_{\tau_1}\#S_{n,m}(E(\tau_1)) = 0$, and so each invertible ideal of norm $\ell^{-r}m$ is said to have weight $0$.  Now restrict to the case where $m$ and $f_1$ are relatively prime.  Since $(d_1, -m)_{\infty} = -1$, by class field theory, there exists a finite prime $p\ne \ell$ such that $(d_1, -m)_p = -1$.  If $p$ is inert in $\Q(\sqrt{d_1})$ then this implies that $v_p(m) \equiv 1 \pmod 2$ and thus $\frakA(m\ell^{-r}) = 0$ for all $r$.  If $p$ is ramified in $\Q(\sqrt{d_1})$ then $\rho(m) = 0$.  So we conclude that if $\ell$ is split in $\OO_{d_1}$ then both sides are $0$.
		
		From now on we assume that the prime $\ell$ does not split in $\OO_{d_1}$.  Therefore, by~\S\ref{subsec:CM}, for every $[\tau_1]$ of discriminant $d_1$ there exists an integral invertible ideal $\fraka = \fraka_{\tau_1}$, such that $\End_{\WW/\pi^n}(E(\tau_1)) \isom R_n(\fraka)$.  Furthermore, the elements $\fraka_{\tau_1}$ can be chosen in a way that is compatible with the Galois action; we will assume that this is the case.  Since $R_n(\fraka) \subset \BBl$, henceforth $\Tr$, $\Norm$, and $\textup{disc}$ will refer to the reduced trace, reduced norm, and reduced discriminant, respectively, in the quaternion algebra.  With the identification of $\End_{\WW/\pi^n}(E(\tau_1))$ with $R_n(\fraka)$, $S_{n,m}(E(\tau_1))$ consists of elements $[\alpha, \beta]$ in $ R_n(\fraka)$ such that
		\[
			\Tr([\alpha, \beta]) = d_2, \;
			\Norm([\alpha, \beta]) = \frac14(d_2^2 - d_2), \;
			\textup{disc}(\OO_{d_1}\oplus\OO_{d_1}[\alpha,\beta]) = m^2,
		\]
		and such that $\Z\oplus\Z[\alpha,\beta]$ in $\left(\Q\oplus\Q[\alpha,\beta]\right)\cap R(\fraka)$ has index a power of $\ell$.  Let $t$ denote the trace of $\frac12(d_1 + \sqrt{d_1})[\alpha,\beta]^{\vee}$.  Then a calculation shows that $4m = d_1d_2 - (d_1d_2 - 2t)^2$, or equivalently that $t = \frac12\left(d_1d_2\pm\sqrt{d_1d_2 - 4m}\right).$  The values of $\Tr([\alpha,\beta])$ and $\Tr\left(\frac12(d_1 + \sqrt{d_1})[\alpha,\beta]^{\vee}\right)$ imply that $\alpha = \frac{(d_1d_2 - 2t) + d_2\sqrt{d_1}}{2\sqrt{d_1}}$, so $\alpha$ is uniquely determined by $t$.  In turn, $t$ is uniquely determined by $m$ if $4m = d_1d_2$, and otherwise $t$ is determined by a choice of sign.  Define
		\[
			S_{n,m}^t(E) := \left\{[\alpha,\beta]\in S_{n,m}(E) : \Tr\left(\frac12(d_1 + \sqrt{d_1})[\alpha,\beta]^{\vee}\right) = t\right\},
		\]
		where $t$ and $m$ satisfy $4m = d_1d_2 - (d_1d_2 - 2t)^2$.  We will prove that $\sum_{\tau_1}\#S_{n,m}^t(E(\tau_1))$ is a weighted sum of integral ideals of norm $m/\ell^r$ and if $\textup{gcd}(m, f_1) = 1$ then $\sum_{\tau_1}\#S_{n,m}^t(E(\tau_1))=  \frac{w_1}{2}\rho(m)\frakA(\ell^{-r}m)$. { The theorem follows immediately from these two claims and the above discussion.
}		
		
		\begin{prop}\label{prop:obtainingb}
			Let $[\alpha,\beta]\in R_n(\fraka)$ as above, and define $\frakb := \beta\frakr^{-1}\frakD\frakabar^{-1}\fraka$, where $\frakr = \ell^{n - 1}\frakq^{-1}$ if $\ell$ is inert in $\OO_{d_1}$ and $\frakr = \frakl^n\frakq^{-1}$ if $\ell$ is ramified.  Then $\frakb$ is an integral invertible ideal of $\OO_{d_1}$ with the following properties
			\begin{align}\label{eq:cond-on-b1}
				\Norm(\frakb) &= m\ell^{-r},\\
				\label{eq:cond-on-b2} \frakb &\sim\frakr\pmod{2\Pic(\OO_{d_1})}.
			\end{align}
		\end{prop}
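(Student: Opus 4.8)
The plan is to derive all three conclusions directly from the membership $[\alpha,\beta]\in R_n(\fraka)$ together with the trace--norm conditions defining $S_{n,m}(E)$, using the explicit description of $R_n(\fraka)$ from \S\ref{subsec:CM}; no further input is needed. For integrality and invertibility of $\frakb$, note first that the condition on the second coordinate in the definition of $R_n(\fraka)$ says precisely that $\beta\in\frakr\frakD^{-1}\frakabar\fraka^{-1}$: this set is $\frakq^{-1}\ell^{n-1}\frakD^{-1}\frakabar\fraka^{-1}$ when $\ell\nmid d_1$ and $\frakq^{-1}\frakl^{n}\frakD^{-1}\frakabar\fraka^{-1}$ when $\ell\mid d_1$, and in both cases it equals $\frakr\frakD^{-1}\frakabar\fraka^{-1}$ by the definition of $\frakr$. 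Hence $\frakb=(\beta\OO_{d_1})\cdot\frakr^{-1}\frakD\frakabar^{-1}\fraka\subseteq\OO_{d_1}$. The fractional ideal $\frakr\frakD^{-1}\frakabar\fraka^{-1}$ is invertible: $\fraka$ and $\frakabar$ are invertible by hypothesis, $\frakD=\sqrt{d_1}\,\OO_{d_1}$ is principal, and $\frakq,\frakl$ are invertible because $\ell q\nmid f_1$. So $\frakb$ is the product of a principal ideal with an invertible ideal, hence invertible; the hypothesis $m\neq0$ guarantees $\beta\neq0$ (by the norm computation below), so $\frakb$ is a genuine nonzero ideal.

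For the norm: in the matrix model $\BBl\subseteq\Mat_2(L)$ the reduced norm of $[\alpha,\beta]$ is $\Norm_{L/\Q}(\alpha)-j^{2}\Norm_{L/\Q}(\beta)$, with $j^{2}=-\ell q$ in the inert case and $j^{2}=-q$ in the ramified case. Since $\alpha=\frac{(d_1d_2-2t)+d_2\sqrt{d_1}}{2\sqrt{d_1}}$ is the explicit element determined above, $\Norm_{L/\Q}(\alpha)$ is an explicit rational function of $d_1,d_2,t$; combining this with $\Norm([\alpha,\beta])=\frac14(d_2^{2}-d_2)$ and the relation $4m=d_1d_2-(d_1d_2-2t)^{2}$ and solving, $\Norm_{L/\Q}(\beta)$ comes out to $m/(\ell q\,|d_1|)$ in the inert case and $m/(q\,|d_1|)$ in the ramified case --- note $d_1<0$ makes these positive, as they must be. Then $\Norm(\frakb)=\Norm_{L/\Q}(\beta)\cdot\Norm(\frakr)^{-1}\Norm(\frakD)\Norm(\frakabar)^{-1}\Norm(\fraka)$; substituting $\Norm(\frakabar)=\Norm(\fraka)$, $\Norm(\frakD)=|d_1|$, and $\Norm(\frakr)=\ell^{2(n-1)}q^{-1}$ in the inert case (where $\Norm(\ell\OO_{d_1})=\ell^{2}$) or $\Norm(\frakr)=\ell^{n}q^{-1}$ in the ramified case, the factors of $q$ and $|d_1|$ cancel, leaving $\Norm(\frakb)=m\ell^{-(2n-1)}$ in the inert case and $m\ell^{-n}$ in the ramified case, i.e. $\Norm(\frakb)=m\ell^{-r}$ in both.

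For the genus class, pass to $\Pic(\OO_{d_1})$: the ideals $\beta\OO_{d_1}$ and $\frakD$ are principal and $\frakabar\fraka=\Norm(\fraka)\OO_{d_1}$ is principal, so $[\frakb]=[\frakr]^{-1}[\frakabar]^{-1}[\fraka]=[\frakr]^{-1}[\fraka]^{2}$; reducing modulo $2\Pic(\OO_{d_1})$ kills the square $[\fraka]^{2}$ and identifies $[\frakr]^{-1}$ with $[\frakr]$, giving $\frakb\sim\frakr\pmod{2\Pic(\OO_{d_1})}$. I do not expect a genuine obstacle here --- the proposition is essentially organized bookkeeping --- but the norm step is the one demanding care: one must keep the sign $d_1<0$ straight and handle the inert/ramified dichotomy uniformly, both in the value of $j^{2}$ and in the two shapes of $\frakr$, so that the spurious factors of $q$ and $|d_1|$ cancel exactly and leave $m\ell^{-r}$ on the nose.
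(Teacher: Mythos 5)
Your proposal is correct and follows essentially the same route as the paper: read off $\beta\in\frakr\frakD^{-1}\frakabar\fraka^{-1}$ from the definition of $R_n(\fraka)$, use the reduced-norm condition together with $4m=d_1d_2-(d_1d_2-2t)^2$ to get $\beta\betabar=\Norm(\frakr\frakD^{-1})\,m\ell^{-r}$, and deduce integrality, invertibility, the norm, and the genus class. The paper's proof is just a compressed version of your computation, leaving the cancellation of $q$ and $|d_1|$ and the $[\frakb]=[\frakr]^{-1}[\fraka]^2$ bookkeeping implicit.
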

        \noindent (Recall that $r= 2n-1$ is $\ell\nmid d_1$ and $r=n$ otherwise.)
		\begin{proof}
			By the definition of $R_n(\fraka)$, we see that $\beta \in  \frakr\frakD^{-1}\frakabar\fraka^{-1}$ and since $\Norm([\alpha,\beta]) = \frac14(d_2^2 - d_2)$ we deduce that $\beta$ satisfies
		\[
			\beta\betabar =
				\begin{cases}
					\frac1{\ell q}
					\left( \frac14(d_2^2 - d_2) - \alpha\alphabar \right)
					& \textup{if }\ell\textup{ is inert in }\OO_{d_1},\\
					\frac1{q}
					\left( \frac14(d_2^2 - d_2) - \alpha\alphabar \right)
					& \textup{if }\ell\textup{ is ramified in }\OO_{d_1}.
				\end{cases}
		\]
			{Since $\alpha = \left(d_1d_2 - 2t + d2\sqrt{d_1}\right)/(2\sqrt{d_1})$ and $m = \frac14\left(d_1d_2 - (d_1d_2 - 2t)^2\right)$,} in both cases the formula simplifies to $\beta\betabar = \Norm(\frakr\frakD^{-1})m\ell^{-r}.$  From these conditions, it is clear that $\frakb$ is integral, invertible, has norm $m\ell^{-r}$, and is equal to $\frakr$ in $\Pic(\OO_{d_1})/2\Pic(\OO_{d_1})$.
		\end{proof}
		
	We obtain further conditions on the ideal $\frakb$ by using the condition that the index of $\Z\oplus\Z[\alpha,\beta]$ in $(\Q\oplus\Q[\alpha,\beta])\cap R(\fraka)$ is not divisible by any prime $p\neq\ell$.
	
		\begin{lemma}\label{lem:index-unram}
			Let $p$ be a prime that divides $\textup{gcd}(m, f_2)$ and does not divide $d_1$. Then $p$ divides the index of $\Z\oplus\Z\phi_{\fraka\frakc}$ in $\left(\Q\oplus\Q\phi_{\fraka\frakc}\right)\cap R(\fraka\frakc)$ if and only if $\frakb\subset p\OO_{d_1}$.
		\end{lemma}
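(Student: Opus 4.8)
The plan is to prove the equivalence locally at $p$; throughout, a subscript $p$ denotes completion at $p$, and I take $p\ne\ell$ (the only case used in applying this lemma). Since $p\mid\gcd(m,f_2)$ and $p\nmid d_1$, and $f_1^2\mid d_1$, we get $p\nmid f_1$, so $\OO_{d_1}$ is maximal at $p$; and $f_2^2\mid d_2$ gives $p^2\mid d_2$. Choosing the auxiliary prime $q\ne p$, the order $R(\fraka\frakc)_p$ is a maximal order in $\Mat_2(\Q_p)$, given explicitly by the presentation in \S\ref{subsec:inert} (or \S\ref{subsec:ram}). Write $\phi:=\phi_{\fraka\frakc}=[\alpha,\beta']\in R(\fraka\frakc)$; its first coordinate is the element $\alpha=\frac{(d_1d_2-2t)+d_2\sqrt{d_1}}{2\sqrt{d_1}}$ computed in the body of the proof, since the trace of $\phi$ and the auxiliary quantity $t$ are unchanged under conjugation by $L^{\times}$. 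Applying Proposition~\ref{prop:obtainingb} with $\fraka\frakc$ in place of $\fraka$, the principal fractional ideal $\beta'\OO_{d_1}$ equals $\frakb$ times the invertible fractional ideal $\mathfrak{M}$ in which the second coordinates of elements of $R(\fraka\frakc)$ are required to lie, up to a factor which is a power of $\ell$ and hence a unit at $p$.

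First I would reduce to a single divisibility: the $p$-part of the index $[(\Q\oplus\Q\phi)\cap R(\fraka\frakc):\Z\oplus\Z\phi]$ is $>1$ iff there is $a\in\Z_p$ with $\frac1p(\phi-a)\in R(\fraka\frakc)_p$, since $\Z_p[\phi]\cong(\OO_{d_2})_p$ is non-maximal and any strictly larger order is generated over it by such an element. Next I would test membership of $\frac1p(\phi-a)=[\frac{\alpha-a}{p},\frac{\beta'}{p}]$ against the three conditions defining $R(\fraka\frakc)$. The condition on the second coordinate, $\frac{\beta'}{p}\in\mathfrak{M}_p$, is equivalent via the ideal identity above to $p^{-1}\frakb$ being integral at $p$, i.e.\ to $\frakb\subseteq p\OO_{d_1}$. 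Granting this, the other two conditions hold automatically: $p^2\mid d_2$ together with $4m=d_1d_2-(d_1d_2-2t)^2$ forces $p\mid(d_1d_2-2t)$, so $\alpha-\overline{\alpha}=(d_1d_2-2t)/\sqrt{d_1}\in p(\OO_{d_1})_p$ (note $\sqrt{d_1}$ is a $p$-adic unit); hence the reduction of $\alpha$ mod $p(\OO_{d_1})_p$ is fixed by conjugation, so lies in the prime field, and there is $a\in\Z_p$ with $\frac{\alpha-a}{p}\in(\OO_{d_1})_p$ (the condition on the first coordinate). For that same $a$, the condition $\alpha-a-\lambda_{\fraka\frakc}\beta'\in p(\OO_{d_1})_p$ follows from $\alpha-\lambda_{\fraka\frakc}\beta'\in\OO_{d_1}$ (as $\phi\in R(\fraka\frakc)$) together with $\lambda_{\fraka\frakc}\frac{\beta'}{p}\in(\OO_{d_1})_p$, the latter holding because condition $(1)$ in the definition of $R(\fraka\frakc)$ gives $\lambda_{\fraka\frakc}\,\frakq^{-1}\overline{\fraka\frakc}(\fraka\frakc)^{-1}\subseteq\OO_{d_1}$ and $p\ne q$. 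Conversely, if $\frakb\not\subseteq p\OO_{d_1}$ then $\frac{\beta'}{p}\notin\mathfrak{M}_p$, so no $a$ works and the $p$-part of the index is trivial. When $\ell$ is ramified in $\OO_{d_1}$ one runs the identical argument with the presentation of \S\ref{subsec:ram}; nothing changes at $p\ne\ell$.

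I expect the main obstacle to be the second half of the membership test: showing that once $\frac{\beta'}{p}\in\mathfrak{M}_p$, the condition on the first coordinate and the condition $\alpha-a-\lambda_{\fraka\frakc}\beta'\in p(\OO_{d_1})_p$ impose nothing further. This is exactly where one must use the arithmetic input $p^2\mid d_2$ (which forces $p\mid d_1d_2-2t$) and the precise normalization of $\lambda_{\fraka\frakc}$ recorded in \S\ref{subsec:inert}/\S\ref{subsec:ram}, and must pay attention to whether $p$ splits or is inert in $\OO_{d_1}$ when producing the integral lift $a$. By contrast, the initial reduction and the ``only if'' direction are routine.
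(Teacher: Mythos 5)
Your proposal is correct and follows essentially the same route as the paper: localize at $p$, reduce $p\mid[\,(\Q\oplus\Q\phi_{\fraka\frakc})\cap R(\fraka\frakc):\Z\oplus\Z\phi_{\fraka\frakc}\,]$ to membership of $\tfrac1p(\phi_{\fraka\frakc}-a)$ in $R(\fraka\frakc)\otimes\Z_p$, use $p\mid m$ and $p^2\mid d_2$ (from $p\mid f_2$) together with $p\nmid d_1$ to see that the first-coordinate and $\lambda$-conditions are automatic, and identify the remaining condition $\beta'/p\in\mathfrak{M}_p$ with $\frakb\subseteq p\OO_{d_1}$. The only cosmetic differences are that the paper exhibits the shift $a$ explicitly rather than producing it from the congruence $\alpha\equiv\overline{\alpha}\pmod{p}$, and your assumption $q\ne p$ is unnecessary (condition $(1)$ on $\lambda_{\fraka\frakc}$ already gives $\lambda_{\fraka\frakc}\beta'/p\in(\OO_{d_1})_p$ for the fixed $q$ of the construction).
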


		\begin{proof}
			It is straightforward to show that a prime $p'$ divides the index if and only if
			\begin{align*}
				\frac{d_2 - p'd_2}{2p'^2} + \frac1{p'}[\alpha,\beta] &=
				\left[\frac{d_2 - p'd_2}{2p'^2} + \frac{d_1d_2 - 2t + d_2\sqrt{d_1}}{2p'\sqrt{d_1}}, \frac{\beta}{p'} \right]\\
				& = \left[\frac{p'(d_1d_2 - 2t) + d_2\sqrt{d_1}}{2p'^2\sqrt{d_1}}, \frac{\beta}{p'}\right] \in R(\fraka)\otimes\Z_{p'}.
			\end{align*}
			Consider a prime $p$ satisfying the assumptions of the lemma.  Since $p$ divides $m$ and $f_2$,
			\[
				\alpha' := \frac{p(d_1d_2 - 2t) + d_2\sqrt{d_1}}{2p\sqrt{d_1}} =
				\frac{1}{\sqrt{d_1}}\left(
				\frac{d_1d_2(p - 1) - 2tp}{2p} +
				\frac{d_2}{p}\frac{d_1 + \sqrt{d_1}}{2}\right) \in p\frakD^{-1}.
			\]
			If $\frakb = \beta\frakr\frakD\frakabar^{-1}\fraka \subseteq p\OO_{d_1}$, then we have that $\frac{\beta}{p}\in\frakr^{-1}\frakD^{-1}\frakabar\fraka^{-1}$, and vice versa.  As $p\nmid d_1$, this happens exactly when $\left[\frac{\alpha'}{p}, \frac{\beta}{p}\right]\in \left(R(\fraka)\otimes\Z_p\right)$, which, as stated above, is equivalent to $p$ dividing the index.
		\end{proof}

		\begin{lemma}\label{lem:index-ram}
			Let $p$ be a prime that divides $\textup{gcd}(m, d_1, f_2)$ that does not divide $\ell f_1$.  If $\frakb\subset \pp^3\OO_{d_1}$, where $\pp$ is the unique prime lying over $p$, then $p$ divides the index of $\Z\oplus\Z\phi_{\fraka\frakc}$ in $\left(\Q\oplus\Q\phi_{\fraka\frakc}\right)\cap R(\fraka\frakc)$.
		\end{lemma}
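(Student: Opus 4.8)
The plan is to adapt the argument of Lemma~\ref{lem:index-unram}: reduce to a local statement at $p$ and then exhibit an explicit element of $\left(\Q_p\oplus\Q_p\phi_{\fraka\frakc}\right)\cap R(\fraka\frakc)$ that does not lie in $\Z_p\oplus\Z_p\phi_{\fraka\frakc}$.  Write $\phi_{\fraka\frakc}=[\alpha,\beta]$ with $\alpha=\frac{(d_1d_2-2t)+d_2\sqrt{d_1}}{2\sqrt{d_1}}$.  As in that proof, $p$ divides the index as soon as one produces a scalar $c\in\Q_p$ with $c+\tfrac1p\phi_{\fraka\frakc}=\bigl[c+\tfrac\alpha p,\tfrac\beta p\bigr]\in R(\fraka\frakc)\otimes\Z_p$.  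Because $p\neq\ell$ and $p\neq q$, localizing at $p$ turns the $\ell$- and $q$-parts of the defining ideals into units (independently of whether $\ell$ is inert or ramified in $\OO_{d_1}$), and because $p\mid d_1$ while $\OO_{d_1}$ is maximal at $p$ (using $p\nmid f_1$), the unique prime $\pp$ over $p$ satisfies $\pp=\overline\pp$, so the fractional ideal $\overline{\fraka\frakc}\,(\fraka\frakc)^{-1}$ is trivial at $p$.  Hence
\[
	R(\fraka\frakc)\otimes\Z_p=\left\{[\alpha,\beta]:
		v_{\pp}(\alpha)\geq -e_p,\ v_{\pp}(\beta)\geq -e_p,\
		\alpha-\lambda\beta\in\OO_{d_1}\otimes\Z_p\right\},
\]
where $e_p:=v_{\pp}(\frakD)$ (so $e_p=1$ for odd $p$), and correspondingly $v_{\pp}(\frakb)=v_{\pp}(\beta)+e_p$.

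The key observation is that the hypothesis $\frakb\subseteq\pp^3\OO_{d_1}$ is very restrictive.  It amounts to $v_{\pp}(\beta)\geq 3-e_p$, that is, $v_{\pp}(\frakb)\geq 3$, and since $\Norm(\pp)=p$ and $p\neq\ell$, the norm formula~\eqref{eq:cond-on-b1} gives $v_p(m)=v_p(\Norm\frakb)=v_{\pp}(\frakb)\geq 3$.  On the other hand $v_p(d_1d_2)=v_p(d_1)+v_p(d_2)\geq 1+2=3$: here $v_p(d_1)\geq 1$ since $p\mid d_1$, and $v_p(d_2)\geq 2$ since $p\mid f_2$ forces $p^2\mid d_2$.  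Feeding this into the relation $4m=d_1d_2-u^2$ with $u:=d_1d_2-2t$, we obtain $v_p(u^2)=v_p(d_1d_2-4m)\geq 3$; as $v_p(u^2)$ is even, $v_p(u)\geq 2$.

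For odd $p$ this is enough.  In the $\Z_p$-basis $\{1,\omega\}$ of $\OO_{d_1}\otimes\Z_p$, with $\omega=\frac{d_1+\sqrt{d_1}}{2}$, one expands $\alpha=\frac{d_2-u}{2}+\frac{u}{d_1}\,\omega$; since $v_p(d_1)=1$ we get $v_p(u/d_1)\geq 1$, so the $\omega$-coefficient of $\frac1p\alpha$ lies in $\Z_p$.  Moreover $v_{\pp}(\lambda\beta)\geq v_{\pp}(\beta)=v_p(m)-1\geq 2=v_{\pp}(p)$, so $\frac1p\lambda\beta\in\OO_{d_1}\otimes\Z_p$, and also $v_{\pp}(\beta/p)\geq 0$.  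Letting $c\in\Q_p$ be minus the scalar component of $\frac1p(\alpha-\lambda\beta)$ in this basis, one checks directly that $c+\frac\alpha p-\lambda\frac\beta p\in\Z_p\omega\subseteq\OO_{d_1}\otimes\Z_p$, that $c+\frac\alpha p\in\OO_{d_1}\otimes\Z_p\subseteq\frakD^{-1}\otimes\Z_p$, and that $\frac\beta p\in\OO_{d_1}\otimes\Z_p\subseteq\frakD^{-1}\otimes\Z_p$.  Thus $\bigl[c+\frac\alpha p,\frac\beta p\bigr]\in R(\fraka\frakc)\otimes\Z_p$, which forces $p$ to divide the index.

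The main obstacle is the case $p=2$, which can only occur when $\ell$ is odd.  Then $2$ ramifies in $\OO_{d_1}$ with $e_2=v_2(d_1)\in\{2,3\}$, and neither $v_2(u/d_1)\geq 1$ nor $\frac12\lambda\beta\in\OO_{d_1}\otimes\Z_2$ follows from $v_2(m)\geq 3$ alone, so the clean argument of the previous paragraph breaks down.  I expect this to require the analogous construction carried out with careful $2$-adic bookkeeping: working inside $\OO_{d_1}\otimes\Z_2=\Z_2[\omega]$, using the residue of $\omega$ modulo $\pp$ and the two-term congruence description of $\frakD$ recorded in~\S\ref{sec:Background}, and exploiting the exponent $v_2(d_1)$ together with the full force of the hypothesis $\pp^3\mid\frakb$.
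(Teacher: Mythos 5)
Your localization criterion and your odd-$p$ argument are correct, and they amount to the paper's own argument in slightly different clothing: the paper checks that the specific element $[\alpha'/p,\beta/p]$ with $\alpha'=\frac{p(d_1d_2-2t)+d_2\sqrt{d_1}}{2p\sqrt{d_1}}$ lies in $R(\fraka\frakc)\otimes\Z_p$, and for odd $p$ the key inequality $v_{\pp}(\alpha'-\lambda\beta)\ge 2$ is obtained from $v_{\pp}(\lambda\beta)=v_{\pp}(\beta)\ge 3-v_p(d_1)=2$ together with the norm computation $\Norm(\alpha')=\frac{m}{d_1}+\frac{d_2(d_2-p^2)}{4p^2}$, which carries the same information as your expansion $\alpha=\frac{d_2-u}{2}+\frac{u}{d_1}\omega$ and your bound $v_p(u)\ge 2$.

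The genuine gap is the case $p=2$. It is part of the statement (it occurs whenever $2\mid\gcd(m,d_1,f_2)$ and $2\nmid\ell f_1$), you leave it as an expectation, and it is not mere $2$-adic bookkeeping. When $v_2(d_1)=3$ and $v_2(m)=3$ one has $v_{\pp}(\beta)=v_{\pp}(\alpha')=0$, and the hypotheses $\pp^3\mid\frakb$ and $2\mid f_2$ by themselves do not force the needed congruence $\alpha'\equiv\lambda\beta\pmod{\pp^2}$. The paper closes exactly this subcase with an arithmetic input your sketch never invokes: because $\OO_{d_1}[\phi_{\fraka\frakc}]$ is an order of discriminant $m^2$ in $\BBl$, which is ramified only at $\ell$ and $\infty$, the Hilbert symbols satisfy $(d_1,-m)_2=(d_2,-m)_2=1$; this forces $\frac{d_2}{4}\equiv 1\pmod 8$, hence $\Norm(\alpha')\equiv\Norm(\beta)\pmod 8$, and a short calculation then yields $v_{\pp}(\alpha'-\lambda\beta)\ge 2$. (The remaining $2$-adic subcases are easier: either both relevant valuations are at least $2$, so the odd-$p$ argument applies, or they are equal and positive and one uses $\#\OO_{d_1}/\pp=2$ to get strict cancellation.) Until you supply this Hilbert-symbol input, or some substitute for it, the $p=2$ case --- and hence the lemma as stated --- remains unproved.
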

		\begin{proof} 		
			If $\frakb\subset \pp^3\OO_{d_1}$, then the same arguments as above show that $\alpha' \in p\frakD^{-1}$ and $\beta\in\frakp^3\frakr^{-1}\frakD^{-1}\frakabar\fraka^{-1}$.  To show that $[\alpha',\beta]\in p(R(\fraka)\otimes\Z_p)$, we are left to show that $v_{\pp}(\alpha' - \lambda\beta)\geq 2$.

			We first prove this when $p$ is odd.  Since $p\nmid f_1$, $v_{\pp}(\beta) = v_{\pp}(\lambda\beta)\ge 3 - v_{p}(d) = 2$.   Note that
			\[
				\Norm(\alpha') =
				\frac{-1}{4p^2d_1}\left(p^2(d_1d_2 - 2t)^2 - d_2^2d_1\right) =
				\frac{m}{d_1} + \frac{d_2(d_2 - p^2)}{4p^2}.
			\]
			Given that $p| f_2$ and $p||d_1$, we have $v_p(\Norm(\alpha')) \geq 2$ and so $v_{\pp}(\alpha')\geq 2$.  By the strong triangle equality $v_{\pp}(\alpha' - \lambda\beta)\geq 2$.

			Now consider the case when $p = 2$.  As above, we have that $v_{\pp}(\beta) \ge 3 - v_p(d_1)$, which is non-negative.  We may rewrite
			\begin{equation}\label{eq:Norm-alpha'}
				\Norm(\alpha') = \frac{m}{d_1} + \frac{d_2}{4}\left(\frac{d_2}{4} - 1\right).
			\end{equation}
			Since $2| f_2$, $4$ divides $\frac{d_2}{4}\left(\frac{d_2}{4} - 1\right)$.  Thus, either $v_{\pp}(\alpha'), v_{\pp}(\lambda\beta) \geq 2$, in which case the proof proceeds as above, or $v_{\pp}(\alpha') = v_{\pp}(\lambda\beta)$.  Assume we are in the latter case.  Since $\#\OO/\pp = 2$, we have $v_\pp(\alpha' - \lambda\beta) > v_{\pp}(\lambda\beta) = v_{\pp}(\beta)$.  This completes the proof unless $v_{\pp}(\beta) = v_{\pp}(\alpha') = 0$.  If $v_{\pp}(\beta) = 0$, then $v_p(d_1) = 3$ and $v_p(m) = 3$.  By assumption, $(d_1 , -m)_2 = (d_2, -m)_2 = 1$ so we must have that $\frac{d_2}{4} \equiv 1 \pmod 8.$  Thus, $\Norm(\alpha') \equiv \Norm(\beta) \pmod 8$.  A calculation shows that this gives $v_\pp(\alpha' - \lambda\beta) \geq 2.$
	\end{proof}		

	Together, these two lemmas prove:
	\begin{prop}
		If there exists a prime $p|\textup{gcd}(m, f_2)$, $p\neq \ell$ such that either
		\begin{enumerate}
			\item $p$ is inert in $\OO_{d_1}$, or
			\item $\textup{gcd}(p,f_1)=1$ and $p$ is ramified in $\OO_{d_1}$ and $v_p(m) > 2$,
		\end{enumerate}
		then $S^t_{n,m}(E(\tau_1)) = \varnothing$ for all $\tau_1$.
	\end{prop}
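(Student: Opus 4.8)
The plan is to derive the proposition directly from Proposition~\ref{prop:obtainingb} together with Lemmas~\ref{lem:index-unram} and~\ref{lem:index-ram}, exploiting that every $[\alpha,\beta]\in S^t_{n,m}(E(\tau_1))$ is optimally embedded away from $\ell$, so that the index of $\Z\oplus\Z[\alpha,\beta]$ in $\left(\Q\oplus\Q[\alpha,\beta]\right)\cap R(\fraka)$ is a power of $\ell$. Suppose for contradiction that $[\alpha,\beta]\in S^t_{n,m}(E(\tau_1))$ for some $\tau_1$. By~\S\ref{subsec:CM} we may identify $\End_{\WW/\pi^n}(E(\tau_1))$ with $R_n(\fraka)$ for an integral invertible ideal $\fraka\subseteq\OO_{d_1}$ coprime to $f_1$, and then Proposition~\ref{prop:obtainingb} produces an integral invertible ideal $\frakb\subseteq\OO_{d_1}$ with $\Norm(\frakb)=m\ell^{-r}$. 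The strategy is to show that in case~$(1)$ the ideal $\frakb$ is forced to lie in $p\OO_{d_1}$, and in case~$(2)$ in $\pp^3\OO_{d_1}$; the index lemmas then force $p$ to divide the above index, which is impossible since $p\ne\ell$.

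For case~$(1)$: ``$p$ inert in $\OO_{d_1}$'' means $\OO_{d_1}$ is maximal at $p$ and $\left(\frac{d_1}{p}\right)=-1$, so in particular $p\nmid d_1f_1$ and $(\OO_{d_1})_p$ is the ring of integers of the unramified quadratic extension of $\Q_p$. Every ideal of $(\OO_{d_1})_p$ is a power of $p(\OO_{d_1})_p$, so its norm is an even power of $p$. Since $p\mid m$, either $v_p(m)$ is odd, in which case no integral ideal of norm $m\ell^{-r}$ exists and $S^t_{n,m}(E(\tau_1))=\varnothing$ immediately; or $v_p(m)$ is even and $\geq 2$, in which case $\frakb_p=p^{v_p(m)/2}(\OO_{d_1})_p\subseteq p(\OO_{d_1})_p$, i.e.\ $\frakb\subseteq p\OO_{d_1}$. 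Since $p\mid\gcd(m,f_2)$ and $p\nmid d_1$, Lemma~\ref{lem:index-unram} then shows $p$ divides the index, a contradiction.

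For case~$(2)$: ``$p$ ramified in $\OO_{d_1}$'' together with $\gcd(p,f_1)=1$ means $\OO_{d_1}$ is maximal at $p$, $p\mid d_1$, and there is a unique prime $\pp\mid p$ of $\OO_{d_1}$ with $\pp^2=p\OO_{d_1}$ and $\Norm(\pp)=p$. Hence $v_\pp(\frakb)=v_p(\Norm(\frakb))=v_p(m)>2$, so $\frakb\subseteq\pp^3\OO_{d_1}$. Since $p\mid\gcd(m,d_1,f_2)$ and $p\nmid\ell f_1$, Lemma~\ref{lem:index-ram} then shows $p$ divides the index, again contradicting that the index is a power of $\ell$. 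In both cases we reach a contradiction, so $S^t_{n,m}(E(\tau_1))=\varnothing$ for all $\tau_1$.

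I expect no serious obstacle here: the argument is essentially bookkeeping once Proposition~\ref{prop:obtainingb} and the two index lemmas are available. The only points that require a little care are to justify the implicit maximality of $\OO_{d_1}$ at $p$ in both cases (so that the hypotheses $p\nmid d_1$ of Lemma~\ref{lem:index-unram} and $p\nmid\ell f_1$ of Lemma~\ref{lem:index-ram} are met, and so that ``inert'' and ``ramified'' carry their usual local meaning), and to dispatch the parity of $v_p(m)$ separately in the inert case.
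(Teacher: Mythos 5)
Your proof is correct and follows essentially the same route as the paper: both arguments note that in the inert case any integral ideal of norm $m\ell^{-r}$ lies in $p\OO_{d_1}$ (or no such ideal exists when $v_p(m)$ is odd), and in the ramified case it lies in $\frakp^3$, and then invoke Lemmas~\ref{lem:index-unram} and~\ref{lem:index-ram} to contradict optimality away from $\ell$. Your write-up merely makes explicit the bookkeeping (maximality of $\OO_{d_1}$ at $p$, the valuation computation $v_{\frakp}(\frakb)=v_p(m)$) that the paper leaves implicit.
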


	\begin{proof}
		If $p|\textup{gcd}(m, f_2)$ and $p$ is inert in $\OO_{d_1}$ then any integral ideal $\frakb$ with norm $m\ell^{-r}$ will be contained in $p\OO_{d_1}$.  Thus there are no embeddings of $\OO_{d_1}$ which are optimal at $p$ and thus the set $S^t_{n,m}(E(\tau_1))$ is empty.

		Similarly, if $(p,f_1)=1$ and $p$ is ramified in $\OO_{d_1}$, with $v_p(m) > 2$, then any integral ideal $\frakb$ with norm $m\ell^{-r}$ will be divisible by $\pp^3$ and again there are no embeddings of $\OO_{d_1}$ which are optimal at $p$.
	\end{proof}
	
	We have obtained a map from $S^t_{n,m}(E(\tau_1))$ to invertible integral ideals in $\OO_{d_1}$ satisfying conditions~\eqref{eq:cond-on-b1}, \eqref{eq:cond-on-b2}, and the additional conditions:
	\begin{align}
		 	\frakb &\not\subseteq p\OO_{d_1}, \;
		 		\textup{for any }p|(m,  f_2), p\ne \ell, p\nmid d_1 \label{eq:frakb-divisibility-inert} \\
		 	\frakb &\not\subseteq \frakp^3\OO_{d_1}, \;
		 		\textup{for any }\frakp|p|(d_1, m,  f_2), p\ne \ell, p\nmid  f_1.\label{eq:frakb-divisibility-ram}
	\end{align}
	
	  Note that the codomain does not depend on $\fraka$, and hence is independent of $\tau_1$.  Thus, we can extend the domain of the map to $\bigcup_{\tau_1} S^t_{n,m}(E(\tau_1))$.

      \begin{remark}
          {Note that the codomain of the map is explicitly computable.  This follows from the results in~\S\ref{sec:Background}.}
        \end{remark}
	
	In the rest of the section, we will prove that for each ideal $\frakb$ that satisfies~\eqref{eq:cond-on-b1},\eqref{eq:cond-on-b2},\eqref{eq:frakb-divisibility-inert} and~\eqref{eq:frakb-divisibility-ram}, there is an algorithm that computes whether $\frakb = \beta\frakr^{-1}\frakD\frakabar^{-1}\fraka$ for some $\beta,\fraka$ such that $[\alpha,\beta]\in R(\fraka)\cap S^t_{n,m}(E(\fraka))$.  If so, the algorithm also computes the size of the fiber lying over $\frakb$, i.e. the number of pairs $(\beta,\fraka)$ such that the above condition holds.  This will show that $\sum_{\tau_1} \#S^t_{n,m}(E(\tau_1))$ is a weighted sum on invertible integral ideals, namely each ideal satisfying~\eqref{eq:cond-on-b1},\eqref{eq:cond-on-b2},\eqref{eq:frakb-divisibility-inert} and~\eqref{eq:frakb-divisibility-ram} is weighted by the size of its fiber, and every ideal not satisfying one of~\eqref{eq:cond-on-b1},\eqref{eq:cond-on-b2},\eqref{eq:frakb-divisibility-inert} and~\eqref{eq:frakb-divisibility-ram} is weighted by $0$.  In the case where $m$ is coprime to $f_1$, we will show that this weighted sum agrees with the formula given in the statement of Theorem~\ref{thm:DeterminingSnm}.

	Let $\frakb$ be an invertible integral ideal satisfying~\eqref{eq:cond-on-b1} and~\eqref{eq:cond-on-b2}.  Since $\frakb\sim \frakr\pmod{2\Pic(\OO_{d_1})}$, there exists an integral invertible ideal $\fraka$, well-defined as an element of $\Pic(\OO_{d_1})/(\Pic(\OO_{d_1}))[2]$, and an element $\gamma_{\fraka} \in\Q(\sqrt{d_1})$ such that $\frakb = \gamma_{\fraka} \frakr^{-1}\fraka\frakabar^{-1}$.  (Since $\Pic(\OO_{d_1})$ is finite, there is an algorithm to find $\fraka$, and thus an algorithm to compute $\gamma_{\fraka}$.) For any $[\frakc]\in\Pic(\OO_{d_1})[2]$, let $\epsilon_{\frakcbar^2}/\Norm(\frakcbar)$ be a principal generator for $\frakcbar/\frakc$, and let $\gamma_{\fraka\frakc} := \gamma_{\fraka}\epsilon_{\frakcbar^2}/\Norm(\frakcbar)$.  Then $\frakb$ is also equal to $\gamma_{\fraka\frakc} \frakr^{-1}\fraka\frakc(\frakabar\frakcbar)^{-1}$.  Let $\beta_{\fraka\frakc} = \gamma_{\fraka\frakc}/\sqrt{d_1}$.  Since $\frakb$ is integral, $\beta_{\fraka\frakc}\in\frakr\frakD^{-1}\overline{\fraka\frakc}(\fraka\frakc)^{-1}$.
		
	By the arguments at the beginning of the proof of Proposition~\ref{prop:obtainingb}, we see that if $\frakb$ is the image of an element in $R(\fraka\frakc)$, then the element must be of the form
	\[
		\phi_{\fraka\frakc} :=
		\left[\frac{(d_1d_2 - 2t) + d_2\sqrt{d_1}}{2\sqrt{d_1}},
			\omega\beta_{\fraka\frakc} \right],
	\]
	where $\omega\in\OO_{d_1}^{\times}$.  As $\alpha := \frac{(d_1d_2 - 2t) + d_2\sqrt{d_1}}{2\sqrt{d_1}} \in\frakD^{-1}$ and $\omega\beta_{\fraka\frakc}\in\frakr^{-1}\frakD^{-1}\overline{\fraka\frakc}(\fraka\frakc)^{-1}$, it is left to determine
	\begin{enumerate}
		\item if $\alpha - \omega\lambda_{\fraka\frakc}\beta_{\fraka\frakc} \in\OO_{d_1}$, and
		\item if $\Z\oplus\Z\phi_{\fraka\frakc}$ in $\left(\Q\oplus\Q\phi_{\fraka\frakc}\right)\cap R(\fraka\frakc)$ has index a power of $\ell$.
	\end{enumerate}
	If both conditions hold, then $\frakb$ is the image of $\phi_{\fraka\frakc} \in R(\fraka\frakc)$.

	We first calculate the number of $\phi_{\fraka\frakc}$ satisfying Condition (1).
		
{\bf Condition (1):} Henceforth, we will assume, without loss of generality, that $\fraka$ and $\frakc$ are coprime to $d_1$.  As a result, we have $\lambda_{\fraka\frakc} = (\prod_{\pp|\frakc}M(\pp)^{v_{\pp}(\frakc)}) \lambda_\fraka \equiv \lambda_\fraka \pmod{d_1}$. Thus
	\[
		\omega\lambda_{\fraka\frakc}\gamma_{\fraka\frakc} \equiv
		\omega(\epsilon_{\frakcbar^2}(\prod_{\pp|\frakc}M(\pp)^{v_{\pp}(\frakc)})
		/\Norm(\frakc))
		\lambda_{\fraka}\gamma_{\fraka} \pmod{\frakD}.
	\]
	Since $\alpha - \omega\lambda_{\fraka\frakc}\beta_{\fraka\frakc} \in \OO_{d_1}$ if and only if $\alpha \sqrt{d_1} - \omega\lambda_{\fraka \frakc}\gamma_{\fraka \frakc} \in \frakD$, this in turn occurs if and only if
	\[
		a - \omega(\epsilon_{\frakcbar^2}(\prod_{\pp|\frakc}M(\pp)^{v_{\pp}(\frakc)})/\Norm(\frakc))
		\lambda_{\fraka} \gamma_{\fraka} \in\frakD,
	\]
where $a := \alpha\sqrt{d_1}.$  Note that $\Norm(\lambda_{\fraka}\gamma_{\fraka}) \equiv \Norm(a) \pmod{d_1}$ and that  $\omega\epsilon_{\frakcbar^2}(\prod_{\pp|\frakc}M(\pp)^{v_{\pp}(\frakc)})/\Norm(\frakcbar)$ has norm congruent to $1$ modulo $d_1$.  To complete the proof, we  need the following two lemmas. The first lemma shows that, as $\frakc$ ranges over all elements of $\Pic(\OO)[2]$ and $\omega$ ranges over all elements of $\OO^{\times}$, the element $(\epsilon_{\frakcbar^2}(\prod_{\pp|\frakc}M(\pp)^{v_{\pp}(\frakc)})/\Norm(\frakc))\omega$ ranges over all congruences classes modulo $\frakD$ that have norm congruent to $1\pmod d$.  The second lemma counts the number of such congruence classes of elements $\gamma$ such that $\alpha - \gamma\beta\in \OO_{d_1}$.

	\begin{lemma}\label{lem:norm1elts}
		Let $\OO = \OO_d$.  Then the morphism
		\[
			\Pic(\OO)[2] \longrightarrow \frac{\left\{\gamma \in \left(\OO/\frakD\right)^{\times} : \Norm(\gamma) \equiv 1 \bmod d \right\}}{\OO^{\times}}
		\]
		that sends $[\frakc] \mapsto \epsilon_{\frakc^2}\prod_{\pp|\frakc}M(\pp)^{v_{\pp}(\frakc)}/\Norm(\frakc)$, where $\frakc$ is any integral representative coprime to $\frakD$, is an isomorphism.
	\end{lemma}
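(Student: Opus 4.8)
The plan is to check that the map is a well-defined group homomorphism, to prove it is injective, and then to deduce surjectivity by comparing the orders of the two finite groups.

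\emph{Well-definedness and homomorphism.} Since $d<-4$, $\OO^{\times}=\{\pm1\}$, so a generator $\epsilon_{\frakc^{2}}$ of the principal ideal $\frakc^{2}$ is well defined up to sign and, the codomain being taken modulo $\pm1$, the image is independent of this choice. Every class in $\Pic(\OO)[2]$ has a representative $\frakc$ coprime to $\frakD$, which is then automatically invertible, being supported away from the conductor. Independence of this representative reduces, after expanding, to the claim that for $\mu\in\OO$ with $(\mu)$ coprime to $\frakD$ one has $\bigl(\prod_{\pp}M(\pp)^{v_{\pp}(\mu)}\bigr)\mu^{2}/\Norm(\mu)\equiv\pm1\pmod{\frakD}$. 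This I would verify one prime $p\mid d$ at a time using the descriptions of $\frakD_{p}$ from~\S\ref{sec:Background}: coprimality forces $p\nmid\Norm(\mu)$, whence $\prod_{\pp}M(\pp)^{v_{\pp}(\mu)}\equiv\Norm(\mu)^{-1}\pmod{d}$ (each $M(\pp)\equiv1\pmod{d}$ and $\Norm(\pp)\mid M(\pp)$), and for odd $p$ the identity $\Tr(\mu)^{2}=4\Norm(\mu)+\mu_{1}^{2}d$ (with $\mu=\mu_{0}+\mu_{1}\tfrac{d+\sqrt{d}}{2}$) gives $\Tr\bigl(\mu^{2}/\Norm(\mu)\bigr)\equiv2\pmod{p^{v_{p}(d)}}$, which is $\Tr(1)$; the case $p=2$ requires a short separate computation in $\OO/\frakD_{2}$. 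The homomorphism property then follows from multiplicativity of $\Norm(\cdot)$ and of $\prod_{\pp}M(\pp)^{v_{\pp}(\cdot)}$ together with the fact that the product of generators of $\frakc_{1}^{2}$ and $\frakc_{2}^{2}$ generates $(\frakc_{1}\frakc_{2})^{2}$.

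\emph{Injectivity.} Suppose $[\frakc]$ maps to $1$, and fix an invertible representative $\frakc$ coprime to $\frakD$. Since $\prod_{\pp}M(\pp)^{v_{\pp}(\frakc)}\equiv1\pmod{\frakD}$, the hypothesis unwinds to $\epsilon_{\frakc^{2}}\equiv\pm\Norm(\frakc)\pmod{\frakD}$; replacing $\epsilon_{\frakc^{2}}$ by $-\epsilon_{\frakc^{2}}$ if necessary and setting $\beta:=\epsilon_{\frakc^{2}}/\Norm(\frakc)$, we obtain $(\beta)=\frakc\frakcbar^{-1}$, $\Norm(\beta)=1$, and $\beta\equiv1\pmod{\frakD}$. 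Write $\beta=\gamma/\overline{\gamma}$ for some $\gamma\in L^{\times}$ by Hilbert's Theorem~90 for $L/\Q$, so $\gamma$ is determined up to $\Q^{\times}$. At each prime $\frakp\mid d$ (where $\overline{\frakp}=\frakp$ and $\sqrt{d}$ is a uniformizer), $\beta$ is a unit, and $\beta\equiv1\pmod{\frakD_{\frakp}}$ rules out the ``bad'' norm-$1$ units, namely those forcing $v_{\frakp}(\gamma)$ odd: if $v_{\frakp}(\gamma)$ were odd then, since conjugation is trivial on the residue field, $\beta\equiv-1$ modulo the $\frakp$-primary part of $\frakD$, which for odd $p$ contradicts $\beta\equiv1$ (and the dyadic prime needs a separate argument). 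Hence $\gamma$ may be rescaled by an element of $\Q^{\times}$ so that $(\gamma)$ is a unit at every $\frakp\mid d$, i.e.\ $(\gamma)$ is coprime to $\frakD$. Then $\frakn:=(\gamma)\frakc^{-1}$ is an invertible fractional ideal coprime to $\frakD$ with $\overline{\frakn}=\frakn$; since it involves no ramified prime and no prime dividing the conductor, its split primes occur in conjugate pairs, so $\frakn$ is principal, generated by a rational number. Therefore $[\frakc]=[(\gamma)][\frakn]^{-1}=1$.

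\emph{Surjectivity by a count, and the main obstacle.} As the map is now an injective homomorphism of finite abelian groups, it suffices to show the two groups have the same order. Genus theory~\cite{Cox-PrimesOfTheForm} (valid for non-maximal orders) gives $\#\Pic(\OO)[2]=\#\bigl(\Pic(\OO)/2\Pic(\OO)\bigr)=2^{\,k-1}$, with $k$ as in~\S\ref{sec:Background}. For the codomain, $\OO/\frakD\cong\prod_{p\mid d}\OO/\frakD_{p}$ and $\Norm$ is the product of the component norms, so the codomain equals $\bigl(\prod_{p\mid d}K_{p}\bigr)/\{\pm1\}$ where $K_{p}:=\ker\bigl(\Norm\colon(\OO/\frakD_{p})^{\times}\to(\Z/p^{v_{p}(d)}\Z)^{\times}\bigr)$; for odd $p\mid d$ every unit of $\OO/\frakD_{p}$ is represented by a rational integer and $K_{p}=\{\pm1\}$ has order $2$, while $\#K_{2}$ equals $1$, $2$, or $4$ according to the three cases defining $k$, by an explicit computation with the ring $\OO/\frakD_{2}$ (whose isomorphism type depends only on $v_{2}(d)$) and the integer-valued norm. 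Multiplying and dividing by the diagonal $\{\pm1\}$ gives order $2^{\,k-1}$, so the map is an isomorphism. The principal difficulty throughout is the dyadic bookkeeping: the sign identity in $\OO/\frakD_{2}$ in the first step, the classification of norm-$1$ units at the prime above $2$ in the second (this is exactly where the $\pm1$ in the statement is unavoidable, and where the fractional $\tfrac{d+\sqrt{d}}{2}$-coordinate must be tracked), and the order of $K_{2}$ in the third. Conductor primes, by contrast, cause no real trouble: every ideal appearing in the argument is coprime to $\frakD$, hence to the conductor, so at every relevant prime $\OO$ is maximal and the usual splitting theory applies.
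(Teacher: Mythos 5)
Your architecture is the mirror image of the paper's: both proofs rest on the same cardinality count ($\#\Pic(\OO)[2]=2^{k-1}$ by genus theory, and $2^{k-1}$ classes in the codomain by a prime-by-prime analysis of $\OO/\frakD$, with the case list at $2$), but the paper then proves \emph{surjectivity} directly — given $\gamma$ in the codomain it uses weak approximation on the conic $e_0^2+de_0e_1+e_1^2\frac{d^2-d}{4}=N^2$ to build an element $\epsilon$ of square norm, coprime to $d$, generating an ideal $\frakc^2$ with $[\frakc]\mapsto\gamma$ — and gets injectivity for free from the count, whereas you prove \emph{injectivity} directly via Hilbert 90 and deduce surjectivity from the count. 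That alternative architecture is legitimate in principle, and your well-definedness and counting steps are essentially the paper's (one slip: the norm is only well defined modulo $2^{v_2(d)-1}$ on $\OO/\frakD_2$, since $\frakD\cap\Z=\frac{|d|}{2}\Z$ for even $d$, so your $K_2$ is not literally a kernel, and its size depends on $d$ modulo $16$, not only on $v_2(d)$; compare the paper's table).

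The genuine gap is in the injectivity step, and it sits exactly where this lemma differs from Gross--Zagier/Dorman: primes dividing the conductor. Your local analysis of $\beta=\gamma/\bar\gamma$ is carried out under the parenthetical assumption that every $\frakp\mid d$ is ramified with $\sqrt d$ a uniformizer and $\OO_\frakp$ a discrete valuation ring; that is true only for primes dividing $d$ but not $f$. Conductor primes always divide $d$ (hence $\frakD$), and there $\OO\otimes\Z_p$ is not a DVR, $\sqrt d$ is not a uniformizer, and ``parity of $v_\frakp(\gamma)$ under rational rescaling'' is not the right invariant. Concretely, if $p\mid f$ is split in the field, then locally $\gamma$ corresponds to a pair $(a,b)$ of units of $\Z_p$ after rescaling, and $(\gamma)$ is coprime to $p$ only if $a\equiv b\pmod{p^{v_p(f)}}$; this must be extracted from the congruence $\beta\equiv\pm1\pmod{\frakD_p}$, and your closing claim that conductor primes ``cause no real trouble because every ideal appearing is coprime to $\frakD$'' begs the question for $(\gamma)$, which Hilbert 90 does not hand you coprime to anything. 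The dyadic case is likewise not mere bookkeeping in your framework: for $d\equiv12\pmod{16}$ one has $-1\equiv1\pmod{\frakD_2}$, and an odd local valuation of $\gamma$ produces the class of $\frac12(d+\sqrt d)$ rather than $-1$, so the ``odd valuation forces $\beta\equiv-1$'' mechanism has to be replaced by a different computation there. These points can very likely be repaired by a careful local analysis at each prime dividing $d$ (split, inert and ramified conductor primes, plus the dyadic subcases), but as written the injectivity argument fails precisely in the non-maximal situation the lemma is designed to cover; the paper's weak-approximation construction of preimages avoids this local analysis altogether, which is presumably why it was chosen.
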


	\begin{proof}
        An easy computation shows that the map is a well-defined group homomorphism.  We will show that the groups have the same cardinality, and then show that the map is surjective.  {When $d = -3$ or $-4$, one can check that both sides are trivial.  Henceforth we assume that $d< -4$ so that $\OO^{\times} = \pm1$.}

		  The exact sequence
		\[
			0 \to \Pic(\OO)[2] \to \Pic(\OO) \stackrel{\times2}{\longrightarrow}\Pic(\OO) \to \Pic(\OO)/2\Pic(\OO) \to 0
		\]
		shows that $\#\Pic(\OO)[2] = \#\left(\Pic(\OO)/2\Pic(\OO)\right)$, which by~\cite[Prop. 3.11 \& Thm 7.7]{Cox-PrimesOfTheForm} is equal to $2^{k - 1}$ (see ~\S\ref{sec:Background} for the definition of $k$).  Let $\gamma := \gamma_0 + \gamma_1 \frac{d + \sqrt{d}}{2} \in \OO$ be such that $\Norm(\gamma) \equiv \gamma_0^2 + \frac{d(d-1)}{4}\gamma_1^2 \equiv 1 \pmod d$.  We need to determine the number of $\gamma$ modulo $\frakD$ that satisfy this equation.  For each odd prime $p|d$, we have $2$ choices for $\gamma \bmod \frakD_p$, namely $\gamma\equiv \pm 1\bmod \frakD_p$.  When $p = 2|d$ we have the following possibilities
		\[
			\begin{array}{ll}
				d \equiv\;\; 4 \pmod{16} &
				\gamma\equiv 1\bmod\frakD_2,\\
				d \equiv 12\pmod{16} &
				\gamma\equiv 1,\textup{ or }
				\frac12(d + \sqrt{d})\bmod\frakD_2,\\
				v_2(d) = 3 \textup{ or }4 &
				\gamma \equiv \pm 1 \bmod \frakD_2,\\
				v_2(d) \geq 5&
				\gamma \equiv \pm 1 \bmod\frakD_2 \textup{ or }\gamma \equiv \frac12(d + \sqrt{d}\pm\sqrt{4 + d})\bmod\frakD_2.\\
			\end{array}
		\]
		Note that if $v_2(d) \geq 5$ then $1 + d/4 \equiv 1 \pmod 8$ so $\frac12\sqrt{4 + d}$ exists modulo $2^{v(d) - 1}$.  This case-by-case analysis shows
		\[
		 	\#\left\{\gamma \in \OO/\sqrt{d} :
		\Norm(\gamma) \equiv 1 \pmod d \right\} = 2^{k},
	  	\]
	and so the quotient by $\pm1$ has cardinality $2^{k - 1}$.
			
	\noindent\textit{Proof of surjectivity:}  Fix a nontrivial $\gamma := \gamma_0 + \gamma_1 \frac{d + \sqrt{d}}{2}$ in the codomain, i.e. $\gamma$ is such that $\gamma_0^2 + d\gamma_0\gamma_1 + \gamma_1^2\frac{d^2 - d}{4} \equiv 1\bmod d$ and $\gamma\not\equiv 1\bmod\frakD$.  Then fix non-zero values of $e_0, e_1\in \Z$ and a positive integer $N$ coprime to $d$ such that
		\[
			e_0^2 + de_0e_1 + e_1^2\frac{d^2 - d}{4} = N^2, \quad
			e_0 \equiv N\gamma_0 \pmod{\frac{d}{(2,d)}}, \quad
			e_1\equiv N\gamma_1 \pmod{(2,d)}.
		\]
		(This is possible because conics satisfy weak approximation.)  Define $\epsilon := \frac{1}{(e_0, e_1)}\left(e_0 + e_1 \frac{d + \sqrt{d}}{2}\right)$.  Then $\Norm(\epsilon) = \left(\frac{N}{(e_0,e_1)}\right)^2$, and since $p\nmid\epsilon$, $\epsilon$ must generate a square ideal, say $\frakc^2$.  Additionally, $N = \Norm(\frakc)(e_0, e_1)$.  Therefore, $[\frakc] \mapsto \gamma$, thus completing the proof of surjectivity and of the Lemma.
	\end{proof}

	\begin{lemma}\label{lem:fromidealtoelement}
		Fix $a,b \in \OO_d$.  Assume that $\Norm(a) \equiv \Norm(b) \pmod d$.  If $\Norm(a)$ is coprime to the conductor $f$ of $d$, then there exists an element $c \in \OO$, such that
		\begin{equation}\label{eq:cond-on-c}
			\Norm(c) \equiv 1 \pmod d\textup{ and }
			a - cb\in\frakD.
		\end{equation}
		Moreover, regardless of whether $\Norm(a)$ is coprime to $f$, if there exists $c \in \OO$ satisfying~\eqref{eq:cond-on-c}, then the number of such $c$ modulo $\frakD$ equals $\widetilde{\rho}_d(a_0, a_1)$ where $a := a_0 + a_1\frac{d + \sqrt{d}}{2}$,
		\begin{align*}
			\widetilde{\rho}^{(2)}_d(s,t) & :=
				\left\{
				\begin{array}{ll}
					2 & \textup{if } d \equiv 12 \bmod{16},
						s\equiv t\bmod2\\
					 & \textup{or if } 8\mid d, v(s) \ge v(d) - 2\\
					1& \textup{otherwise}
				\end{array}\right\}
				\cdot\left\{
					\begin{array}{ll}
						2 & \textup{if } 32\mid d, 4\mid (s - 2t)\\
						1& \textup{otherwise}
					\end{array}
					\right\}\\
			\widetilde{\rho}_d(s,t) &:= \widetilde{\rho}^{(2)}_d(s,t)\cdot2^{\#\{p: v_p(s) \ge v_p(d) > 0, p\ne 2\}}.
		\end{align*}
	\end{lemma}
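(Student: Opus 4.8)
The plan is to reduce both assertions to independent local statements at the primes $p\mid d$, using the primary decomposition $\frakD=\prod_{p\mid d}\frakD_p$ and the induced isomorphism $\OO/\frakD\cong\prod_{p\mid d}\OO/\frakD_p$. A preliminary point is that the reduced norm descends to a well-defined multiplicative map $\OO/\frakD\to\Z/d\Z$: from the identity $\Norm(c+\delta)-\Norm(c)=\Tr(c\bar\delta)+\Norm(\delta)$ this amounts to the inclusions $\Norm(\frakD_p)\subseteq p^{v_p(d)}\Z_p$ and $\Tr(\frakD_p)\subseteq p^{v_p(d)}\Z_p$, where the second uses that every element of $\OO$ has even trace when $2\mid d$. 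Consequently both conditions ``$\Norm(c)\equiv1\pmod d$'' and ``$a-cb\in\frakD$'' depend only on $c\bmod\frakD$. Writing $G:=\{c\in(\OO/\frakD)^{\times}:\Norm(c)\equiv1\pmod d\}$, which acts on $\OO/\frakD$ by multiplication, the set of solutions $c$ (when nonempty) is a coset of $\mathrm{Stab}_G(b)$; since $\OO/\frakD$ is commutative and a fixed solution $c_1$ is a unit, $\mathrm{Stab}_G(b)=\mathrm{Stab}_G(c_1^{-1}a)=\mathrm{Stab}_G(a)$. This explains why the count depends only on $a$, and reduces the counting claim to computing $\#\mathrm{Stab}_G(a)=\prod_{p\mid d}\#\{c\in G_p:ca\equiv a\pmod{\frakD_p}\}$, where $G_p:=\{c\in(\OO/\frakD_p)^{\times}:\Norm(c)\equiv1\pmod{p^{v_p(d)}}\}$.

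For an odd prime $p\mid d$ I would use the trace description of $\frakD_p$ recalled in \S\ref{sec:Background}: a class modulo $\frakD_p$ is determined by $\Tr$ modulo $p^{v_p(d)}$, and since $\Norm\bigl(c_0+c_1\tfrac{d+\sqrt d}{2}\bigr)\equiv c_0^2\pmod{p^{v_p(d)}}$ one finds $G_p=\{\pm1\}$. Hence the local stabilizer of $a$ has order $2$ exactly when $-a\equiv a\pmod{\frakD_p}$, i.e. $\Tr(a)\equiv0\pmod{p^{v_p(d)}}$, i.e. $v_p(a_0)\geq v_p(d)$, and order $1$ otherwise; this yields the factor $2^{\#\{p\neq2\,:\,v_p(a_0)\geq v_p(d)\}}$. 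For existence at such $p$: if $p\mid f$ the coprimality hypothesis forces $p\nmid\Norm(a)$, so $a$ is a unit modulo $\frakD_p$ and $c:=ab^{-1}$ works (its norm is $\equiv1$ by hypothesis); if $p\nmid f$ then $v_p(d)=1$ and $\OO/\frakD_p\cong\F_p$, where one checks directly that either $a$ is a unit (take $c=ab^{-1}$) or $a\equiv b\equiv0\pmod{\frakD_p}$ (take $c=1$).

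The prime $p=2$ is where the work lies; one computes inside the length-$v_2(d)$ ring $\OO_2/\frakD_2$. Its norm-$1$ classes were already enumerated in the proof of Lemma~\ref{lem:norm1elts}: $\{1\}$ if $d\equiv4\bmod16$; $\{1,\tfrac{d+\sqrt d}{2}\}$ if $d\equiv12\bmod16$; $\{\pm1\}$ if $v_2(d)\in\{3,4\}$; and $\{\pm1\}\cup\{\tfrac12(d+\sqrt d\pm\sqrt{4+d})\}$ if $32\mid d$. It remains, given $a$, to decide which of these fix $a\bmod\frakD_2$; using $y\in\frakD_2\iff y_0\equiv0\pmod{2^{v_2(d)-1}}$ and $y_1\equiv0\pmod2$, this reduces to congruences on $(a_0\bmod 2^{v_2(d)-1},\,a_1\bmod2)$, and running through the cases produces the two-factor shape of $\widetilde\rho^{(2)}_d(s,t)$: the first factor records whether the nontrivial non-$\pm\nu$ element of $G_2$ fixes $a$ (this element is $\tfrac{d+\sqrt d}{2}$ for $v_2(d)=2,\ d\equiv12\bmod16$, giving $s\equiv t\bmod2$, and is $-1$ for $v_2(d)\geq3$, giving $8\mid d$ with $v_2(s)\geq v_2(d)-2$), and the second factor, relevant only when $32\mid d$, records whether one of the two extra classes $\tfrac12(d+\sqrt d\pm\sqrt{4+d})$ fixes $a$, which works out to $4\mid(s-2t)$. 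For existence at $2$: if $2\mid f$ then $\Norm(a)$ is odd, $b$ is a unit modulo $\frakD_2$, and $c=ab^{-1}$ works; if $2\nmid f$ then necessarily $v_2(d)\in\{2,3\}$, and a short computation (using $\Norm(a)\equiv\Norm(b)\pmod{2^{v_2(d)}}$ and the structure of the small ring $\OO_2/\frakD_2$) shows $a\equiv\pm b\pmod{\frakD_2}$, so $c=\pm1$ works. Multiplying the local counts and gluing the local solutions by CRT completes the proof.

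I expect the $32\mid d$ subcase to be the main obstacle: there $\OO_2/\frakD_2$ is no longer a quotient of a discrete valuation ring, its norm-$1$ classes form an order-$4$ group generated by $-1$ and $\nu:=\tfrac12(d+\sqrt d+\sqrt{4+d})$, and one must carefully track which of these four elements fix a given $a$. This bookkeeping is precisely the source of the factor ``$2$ if $32\mid d$ and $4\mid(s-2t)$'' and is the most calculation-heavy step of the argument.
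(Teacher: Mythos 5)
Your overall strategy coincides with the paper's: decompose $\frakD$ into its primary parts, use the enumeration of norm-one classes modulo $\frakD_p$ from Lemma~\ref{lem:norm1elts}, and work prime by prime, gluing by CRT. Your repackaging of the counting half as the order of the stabilizer of $a$ under the group of norm-one classes is a clean and correct way to organize exactly the computation the paper does, and your local criteria (at odd $p$: $-1$ fixes $a$ iff $v_p(a_0)\ge v_p(d)$; at $2$: which of the classes $\frac{d+\sqrt d}{2}$, $-1$, $\frac12(d+\sqrt d\pm\sqrt{4+d})$ fix $a$) do reproduce $\widetilde{\rho}_d(a_0,a_1)$.

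The one step that fails as written is the existence argument at $p=2$ when $2\nmid f$. You assert that $\Norm(a)\equiv\Norm(b)\pmod{2^{v_2(d)}}$ forces $a\equiv\pm b\pmod{\frakD_2}$, so that $c=\pm1$ suffices. This is true for $v_2(d)=3$, but false for $d\equiv 12\pmod{16}$: there $\Norm(a)\equiv a_0^2+a_1^2\pmod 4$, so the norm congruence only pins down the unordered pair of parities of $(a_0,a_1)$. For example, with $d=-20$, $a=1$ and $b=4+\frac{d+\sqrt d}{2}$ have norms $1$ and $41\equiv 1\pmod{20}$, yet $a\not\equiv\pm b\pmod{\frakD_2}$ since $a_1$ and $b_1$ have opposite parities (note also that $-1\equiv 1\pmod{\frakD_2}$ when $v_2(d)=2$, so $\pm1$ gives only one class anyway). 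The required $c$ in this subcase is the class $\frac{d+\sqrt d}{2}$, multiplication by which swaps the parities of the two coordinates; this is precisely how the paper's proof treats $d\equiv12\pmod{16}$ — it shows only that the conditions $a_0\equiv a_1\pmod 2$ and $b_0\equiv b_1\pmod 2$ hold or fail together, and then draws on the full list $\{1,\frac{d+\sqrt d}{2}\}$ of norm-one classes. The repair stays entirely inside your framework, since you already have the correct list of classes, but the claim ``$a\equiv\pm b\pmod{\frakD_2}$'' must be dropped and replaced by this case analysis.
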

	\begin{remark}\label{rmk:RhoExpression}
		If $\Norm(a)$ is coprime to $f$, then $\widetilde{\rho}_d(a_0,a_1)$ has a simpler expression; it is equal to $2^{\#\{p : p|(d,\Norm(a))\}}$.
	\end{remark}

	\begin{proof}
		First we will prove existence of $c$ in the case where $\Norm(a)$ is coprime to the conductor.  Write $a := a_0 + a_1\frac{d + \sqrt{d}}{2}$ and $b := b_0 + b_1\frac{d + \sqrt{d}}{2}$.  By our assumption on $\Norm(a), \Norm(b)$, we see that $a_0^2 \equiv b_0^2 \pmod{ p^{v(d)}}$ for all odd $p$ and that $a_0^2 - \frac{d}{4}a_1^2 \equiv b_0^2 - \frac{d}{4}b_1^2 \pmod{2^{v(d)}}.$  Since $\Norm(a)$ is coprime to the conductor, we get that $a_0 \equiv \pm b_0 \pmod{p^{v(d)}}$ for all odd primes.  From the description of the elements of norm $1\mod d$ in the proof of Lemma~\ref{lem:norm1elts}, it is clear that there is a $c$, with $\Norm(c) \equiv 1 \pmod d$ such that $a \equiv bc \pmod {\frakD_p}$ for all odd $p$.
			
		It remains to show that if $2\mid d$, then there is such a $c$ that satisfies $a \equiv bc \pmod {\frakD_2}$. The norm congruence as well as the co-primality assumption shows
		\begin{enumerate}
			\item If $d \equiv 12 \bmod{16}$, then either $a_0\equiv a_1 \pmod 2$ and $b_0 \equiv b_1\pmod 2$ (when $\Norm(a) \equiv 0 \pmod 2$ or $a_0\not\equiv a_1 \pmod 2$ and $b_0 \not\equiv b_1\pmod 2$ ($\Norm(a)\equiv 1\pmod 2$).
				
			\item If $v_2(d) = 8$, then $a_0 \equiv \pm b_0 \pmod{4}$ and $a_1\equiv b_1\pmod 2$.
		\end{enumerate}
		The description of possible $c \in \OO$ such that $\Norm(c)\equiv 1 \pmod d$ given in Lemma~\ref{lem:norm1elts} completes the proof of existence.
			
		Henceforth we assume that there exists a $c\in\OO$ such that $\Norm(c) \equiv 1\pmod{d}$ and $a\equiv bc\pmod{\frakD}$.  Since $\Norm(c) \equiv 1 \pmod{d}$ we may replace $b$ with $bc$ and assume that $a\equiv b \pmod{\frakD}$.
			
		Let $c' = c'_0 + c'_1\frac{d + \sqrt{d}}{2}\in \OO$ be such that $a\equiv c'b\pmod{\frakD}$.  Since $a\equiv b \pmod{\frakD}$, we must have $(c' - 1)b\in\frakD$ and $\Norm(c')\equiv 1 \pmod{d}$.  Therefore for all odd primes $p\mid d$, either $c'_0 \equiv 1 \pmod{p^{v(d)}}$ or $v_p(b_0)\ge v_p(d)$, and hence $v_p(a_0)\ge v_p(d)$.  So for odd $p$, there are $2$ choices for $c'$ modulo $\frakD_p$ if $v_p(a_0) \geq v_p(d)$ and otherwise $c'$ is uniquely determined modulo $\frakD_p$.
			
		Now let $p = 2$.  Then we have the following possibilities for $c'$
		\[
			\begin{array}{ll}
				d \equiv\;\; 4 \pmod{16} &
				c'\equiv 1 \bmod{\frakD_2}\\
				d \equiv 12\pmod{16} &
				\textup{either }c'\equiv 1\bmod{\frakD_2},\textup{ or }\\
				& c'\equiv\frac12(d + \sqrt{d})\bmod\frakD_2,
				\textup{ and }a_0 \equiv a_1\pmod 2,\\
				v_2(d) = 3 \textup{ or }4 &
				\textup{either }c'\equiv 1\bmod{\frakD_2},\textup{ or }\\
				& c'\equiv-1\bmod\frakD_2,
				\textup{ and }v_p(b_0),v_p(a_0)\ge v_p(d) - 2,\\
				v_2(d) \geq 5&
				\textup{either }c'\equiv 1\bmod{\frakD_2},\textup{ or }\\
				& c'\equiv-1\bmod\frakD_2,
				\textup{ and }v_p(b_0),v_p(a_0)\ge v_p(d) - 2,\textup{ or }\\
				& c'\equiv \frac12(d + \sqrt{d}\pm\sqrt{4 + d})\bmod\frakD_2
				\textup{ and }a_0 \equiv 2a_1\pmod{4}.
			\end{array}
		\]
		In the last case, when $v_2(d) \geq 5$, if $a_0 \equiv 2a_1\pmod{4}$ \emph{and}  $v_p(a_0) \ge v_p(d) - 2$, then $c'$ may take on all values modulo $\frakD_2$ that have norm congruent to $1$ modulo $d$.  This case-by-case analysis shows that there are $\widetilde{\rho}_d^{(2)}(a_0,a_1)$ choices for $c'$ modulo $\frakD_2$.  This completes the proof.
	\end{proof}
	\begin{remark}
		The assumption that $\Norm(a)$ is coprime to the conductor in Lemma~\ref{lem:fromidealtoelement} is crucial for the existence of $c$.  Let's assume that $p$ divides the conductor and $v_p(d) = 2$.  Then if $p^2 | \Norm(a)$, the congruence $a_0^2 \equiv b_0^2 \pmod{p^2}$, only implies that $a_0 \equiv b_0 \equiv 0 \pmod p$, \emph{not} that $a_0 \equiv \pm b_0 \pmod{p^2}$, which is what is needed to prove existence.
	\end{remark}

	Now we return to the proof of the theorem.  Recall that we are determining for which $[\frakc]\in\Pic(\OO)[2]$ we have
	\begin{enumerate}
		\item $\alpha - (\epsilon_{\frakcbar^2}/\Norm(\frakcbar))
		\omega\lambda_{\fraka}\beta_{\fraka}\in\OO_{d_1}$, and
		\item $\Z\oplus\Z\phi_{\fraka\frakc}$ in $\left(\Q\oplus\Q\phi_{\fraka\frakc}\right)\cap R(\fraka\frakc)$ has index a power of $\ell$.
	\end{enumerate}
	Assume that there exists a $[\frakc]\in\Pic(\OO)$ and $\omega\in\OO_{d_1}^{\times}$  such that $(1)$ holds.  Then Lemmas~\ref{lem:norm1elts} and~\ref{lem:fromidealtoelement} show that there are $\widetilde{\rho}_d(-t,d_2)$ elements of $\Pic(\OO)[2]$ and $\frac{w_1}{2}$ choices for $\omega$ such that $(1)$ is satisfied.  In addition, the proofs of Lemmas~\ref{lem:norm1elts} and~\ref{lem:fromidealtoelement} give an algorithm for testing whether there exists a $[\frakc]\in\Pic(\OO)$ and $\omega\in\OO_{d_1}^{\times}$  such that $(1)$ holds.

	\begin{remark}
		If $m$ and $f_1$ are coprime, then the proofs of these lemmas also show that there does exist a $[\frakc]\in\Pic(\OO)$ and $\omega\in\OO_{d_1}^{\times}$  such that $(1)$ holds.
	\end{remark}
	
	{\bf Condition (2):}  Now we determine which of these $\frac{w_1}{2}\widetilde{\rho}_d(-t,d_2)$ elements also have the desired index.  Condition (2) is satisfied if and only if $\Z[\phi_{\fraka\frakc}]$ is optimally embedded in $R(\fraka\frakc)$ at
every prime $p \ne \ell$.
				
	From the proof of Lemma~\ref{lem:index-unram}, we see that the only primes different from $\ell$ that can divide the index of $\Z\oplus\Z\phi_{\fraka\frakc}$ in $\left(\Q\oplus\Q\phi_{\fraka\frakc}\right)\cap R(\fraka\frakc)$ are the primes $p$ that divide $\textup{gcd}(m,  f_2)$.  If $p$ is such a prime that does not divide $d_1$, then for any preimage of an invertible ideal $\frak{b}$ satisfying conditions~\eqref{eq:cond-on-b1}, \eqref{eq:cond-on-b2}, \eqref{eq:frakb-divisibility-inert}, \eqref{eq:frakb-divisibility-ram}, Lemma~\ref{lem:index-unram} ensures that $p$ does not divide the index.  So if the gcd of $m, d_1,$ and $f_2$ is trivial, then the size of the fiber is equal to the number of elements satisfying condition (1), which was shown above to be either $0$ or $\frac{w_1}{2}\widetilde{\rho}_d(-t,d_2)$ .
		
	Now consider the case that there exists a prime	$p | \textup{gcd}(m, d_1,  f_2)$, $p\ne\ell$. Then we have $v_p(m) \geq 2$.  If $p\nmid  f_1$, then Lemma~\ref{lem:index-ram} shows that $v_p(m) = 2$ and the condition on the index is equivalent to determining whether
		\[
			\alpha' - (\epsilon_{\frakcbar^2}/\Norm(\frakcbar))
			\omega\lambda_{\fraka}\beta_{\fraka} \not\in p\OO_{d_1},
			\textup{ where }
			\alpha' := \frac{p(d_1d_2 - 2t) + d_2\sqrt{d_1}}{2p\sqrt{d_1}}.
		\]
		Note that the norm of $\alpha'\sqrt{d_1}$ (see~\eqref{eq:Norm-alpha'}) and the norm of $\sqrt{d_1}\omega\lambda_{\fraka}\beta_{\fraka}$ are both congruent to $-\ell^r m$ modulo $p^{2 + v_p(d_1)}$.  This is a stronger congruence than the one between $\Norm(\alpha)$ and $\Norm(\beta)$ that we used to invoke Lemma~\ref{lem:fromidealtoelement}.  This stronger congruence allows us to show that exactly half of the $\frac{w_1}{2}\widetilde{\rho}_d(-t,d_2)$ elements of $\Pic(\OO)[2]$ are such that $\alpha' - (\epsilon_{\frakcbar^2}/\Norm(\frakcbar))\omega\lambda_{\fraka}\beta_{\fraka}\not\in p\OO_{d_1}$. (One proves this by using arguments almost identical to those in Lemma~\ref{lem:fromidealtoelement}.)  Furthermore, at each prime, these conditions are independent.  So for each prime dividing $\textup{gcd}(m, d_1,  f_2)$, the size of the fiber consisting of elements satisfying both Conditions (1) and (2), is divided in half.
		
		In summary, the number of $[\frakc]\in\Pic(\OO)[2]$ and $\omega\in\OO_{d_1}^{\times}$ that satisfy conditions $(1)$ and $(2)$ above is bounded above by $\frac{w_1}{2}\widetilde{\rho}_d(-t,d_2)2^{-\#\{p|(m, d_1,  f_2), p\nmid \ell f_1\}}$, with equality if $\textup{gcd}(m,f_1) = 1$.  If $m$ is not coprime to the conductor (in which case $w_1 = 2$), then the number of $[\frakc]$ is either $0$, or is of the form $2^k$ with
		\[
			\#\left\{p : p|\textup{gcd}(m, d_1), p\nmid f_1f_2\right\}
			\leq k
			\leq v_2(\widetilde{\rho}_d(-t,d_2)) - \#\{p|\textup{gcd}(m, d_1,  f_2), p\nmid\ell f_1\}.
		\]
		This proves that $\sum_{\tau_1}\#S^t_{n,m}(E(\tau_1))$ is a weighted sum of ideals with a fixed norm, and the proof gives an algorithm for computing this weighted sum.
		
		From now on assume that $\textup{gcd}(m, f_1) = 1$.  So far we have shown that
		\[
			\sum_{\tau_1}\#S_{n,m}^t(E(\tau_1))	= \frac{w_1\widetilde{\rho}_d(-t,d_2)}{2^{1+\#\{p|\textup{gcd}(m, d_1,  f_2), p\ne\ell\}}}\cdot \#\left\{\frakb\subseteq\OO : \textup{satisfying }~\eqref{eq:cond-on-b1}, \eqref{eq:cond-on-b2}, \eqref{eq:frakb-divisibility-inert}, \eqref{eq:frakb-divisibility-ram}\right\}.
		\]
		
		Note that it follows from Corollary~\ref{cor:genus} that an ideal $\frakb$ of norm $\ell^{-r}m$ satisfies $\frakb \sim\frakr\pmod{2\Pic(\OO_{d_1})}$ if and only if $\widehat{\Psi}_{\ell}(m\ell^{-r}\Norm(\frakr)) = \mathbf{1}$.  In particular, either all ideals of norm $\ell^{-r}m$ satisfy~\eqref{eq:cond-on-b2}, or none do.  Therefore
		\[
			\#\left\{\frakb\subseteq\OO : \textup{satisfying }~\eqref{eq:cond-on-b1}, \eqref{eq:cond-on-b2}, \eqref{eq:frakb-divisibility-inert}, \eqref{eq:frakb-divisibility-ram}\right\} = \begin{cases}
		\frakA(m\ell^{-r}) &\textup{if }\widehat{\Psi}_{\ell}(m\ell^{-r}\Norm(\frakr)) = \mathbf{1},\\
		0 & \textup{otherwise}.
\end{cases}
		\]
		
		By the definition of $q$ in~\S\ref{sec:QuaternionOrders}, $m\ell^{-r}\Norm(\frakr) = -m$ in $\Z/f_1\Z$.  Since $m$ is coprime to $f_1$, $-m$ is congruent to a non-zero square modulo $f_1$ and so $\Psi_p(-m) = 1$ for all $p|f_1$.  (Recall from~\S\ref{sec:Background} that if $p|d_1$ and $p\nmid f_1$, then $\Psi_p(-m) = (d_1, -m)_p$.)  Therefore
		\[
			\#\left\{\frakb\subseteq\OO : \textup{satisfying }~\eqref{eq:cond-on-b1}, \eqref{eq:cond-on-b2}, \eqref{eq:frakb-divisibility-inert}, \eqref{eq:frakb-divisibility-ram}\right\} = \begin{cases}
		\frakA(m\ell^{-r}) &\textup{if }\rho(m)\ne 0,\\
		0 & \textup{otherwise}.
\end{cases}
		\]
		By Remark~\ref{rmk:RhoExpression}, we know that
		\[
		\frac{w_1\widetilde{\rho}_d(-t,d_2)}{2^{1+\#\{p|\textup{gcd}(m, d_1,  f_2), p\ne\ell\}}} = \frac{w_1}{2}2^{\#\{p|\textup{gcd}(m,d_1) : p\nmid f_2\textup{ or }p=\ell\}},
		\]
		which completes the proof.
	\end{proof}

	\begin{prop}\label{prop:LocalFactors}
		Fix a prime $\ell$ and a positive integer $m$ of the form $\frac14(d_1d_2 - x^2)$.  Assume that $m$ and $f_1$ are coprime.  Then
		\[
			\rho(m)\frakA(\ell^{-r}m) = \varepsilon_{\ell}(\ell^{-r}m)
			\prod_{p|m, p\ne\ell}\begin{cases}
				1 + v_p(m) & \left(\frac{d_1}{p}\right) = 1, p\nmid f_2,\\
				2 & \left(\frac{d_1}{p}\right) = 1, p| f_2, \\
				\frac12\left(1 + (-1)^{v_p(m)}\right) &
					\left(\frac{d_1}{p}\right) = -1, p\nmid f_2,\\
				2 & p|d_1, (d_1, -m)_p = 1, p\nmid f_2\\
				1 & p|d_1, (d_1, -m)_p = 1, p| f_2, v_p(m) {\leq} 2\\
				0 & \textup{otherwise},
			\end{cases}
		\]
		where
        \[
            \varepsilon_{\ell}(N) = \begin{cases}
            0 & \textup{if either }N\notin\Z \textup{ or }\ell\nmid d_1 \textup{ and } v_{\ell}(N)\equiv 1 \pmod 2,\\
            2& \textup{if }N\in \Z\textup{ and }\ell|d_1,\\
            1& \textup{otherwise.}
            \end{cases}
        \]
%
	\end{prop}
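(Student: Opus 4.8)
The plan is to reduce Proposition~\ref{prop:LocalFactors} to a prime-by-prime verification, after observing that the ideal count $\frakA$ is multiplicative. Using the unique decomposition of an invertible ideal into comaximal primary components recalled in~\S\ref{sec:Background}, any invertible $\frakb\subseteq\OO_{d_1}$ with $\Norm(\frakb)=N$ factors uniquely as $\frakb=\prod_p\frakb_p$ with $\frakb_p$ primary at $p$ and $\Norm(\frakb_p)=p^{v_p(N)}$, and each divisibility condition defining $\frakA(N)$ constrains a single factor $\frakb_p$. Hence $\frakA(N)=\prod_{p\mid N}\frakA_p(p^{v_p(N)})$, where $\frakA_p(p^a)$ counts the primary-at-$p$ invertible ideals of norm $p^a$ satisfying the local condition at $p$ (with $\frakA_p(1)=1$). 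Since $m$ is coprime to $f_1$ and $\ell\nmid f_1$, every prime dividing $\ell^{-r}m$ is coprime to $f_1$, so $\OO_{d_1}$ is maximal there and $\frakA_p$ is computed by the usual local ideal theory; as $v_p(\ell^{-r}m)=v_p(m)$ for $p\neq\ell$, we get $\frakA(\ell^{-r}m)=\frakA_\ell(\ell^{v_\ell(m)-r})\prod_{p\mid m,\,p\neq\ell}\frakA_p(p^{v_p(m)})$, with $\frakA=0$ when $\ell^{-r}m\notin\Z$. Writing $\rho_p=2$ if $p\mid(m,d_1)$ and either $p\nmid f_2$ or $p=\ell$, $\rho_p=0$ if $p\mid d_1$, $p\nmid f_1\ell$ and $(d_1,-m)_p=-1$, and $\rho_p=1$ otherwise, we have $\rho(m)=\prod_p\rho_p$, so the proposition amounts to the local identities $\rho_\ell\,\frakA_\ell(\ell^{v_\ell(m)-r})=\varepsilon_\ell(\ell^{-r}m)$ and, for each $p\mid m$ with $p\neq\ell$, the statement that $\rho_p\,\frakA_p(p^{v_p(m)})$ equals the displayed case-value at $p$ — once we know that $\rho_p=1$ for every prime $p\mid d_1$ with $p\nmid f_1\ell m$. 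We may assume $\ell$ is inert or ramified in $\OO_{d_1}$, since when $\ell$ is split both sides of the claimed identity vanish, exactly as in the proof of Theorem~\ref{thm:DeterminingSnm}.

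Next I would record two consequences of the relation $4m=d_1d_2-x^2$. First, if $p\mid d_1$ and $p\nmid f_1m$ then $-m$ is a square in $\Z_p^{\times}$, so $(d_1,-m)_p=1$: for odd $p$ this is because $x^2\equiv-4m\pmod p$ exhibits $-m$ as a unit square, and for $p=2$ it is a short $2$-adic check using $v_2(d_1)\in\{2,3\}$. In particular $\rho_p=1$ for all $p\mid d_1$ with $p\nmid f_1\ell m$, which is the proviso above; and if $\rho(m)=0$ then necessarily some prime $p\mid(m,d_1)$ with $p\nmid f_1\ell$ has $(d_1,-m)_p=-1$. Second, if $p\mid(d_1,f_2)$ then $v_p(d_1d_2)=v_p(d_1)+v_p(d_2)>v_p(4)+1$, so comparing valuations in $x^2=d_1d_2-4m$, together with the fact that $v_p(x^2)$ is even, rules out $v_p(m)=1$. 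Using the first input, the case $\rho(m)=0$ is immediate: the offending prime $p$ then divides $m$, appears in the product, divides $d_1$, and has $(d_1,-m)_p\neq1$, so it falls into the last case in the display and contributes $0$; thus both sides vanish. Henceforth I would assume $\rho(m)\neq0$.

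It remains to check the local identities. For $p\mid m$ with $p\neq\ell$ (so $p\nmid f_1$), split into cases according to whether $p$ is split, inert, or ramified in $\OO_{d_1}$ and whether $p\mid f_2$, using the elementary counts of primary ideals: a split $p$ has $v_p(m)+1$ primary ideals of norm $p^{v_p(m)}$, exactly $2$ of which are prime to $p$; an inert $p$ has one if $v_p(m)$ is even and none otherwise, and that one is divisible by $(p)$ once $v_p(m)\geq2$; a ramified $p$ has only $\frakp^{v_p(m)}$, which is divisible by $\frakp^3$ exactly when $v_p(m)\geq3$. Feeding these into $\rho_p\,\frakA_p$, and using for ramified $p$ that $(d_1,-m)_p=1$ (since $\rho(m)\neq0$) and that $v_p(m)=1$ cannot occur when $p\mid(d_1,f_2)$ (the second input), reproduces precisely the six displayed cases; the $p=2$ subcase of the ramified analysis is the only one requiring a congruence computation. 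For $p=\ell$ the definition of $\frakA$ imposes no extra divisibility constraint, so $\frakA_\ell(\ell^a)$ equals $1$ for all $a\geq0$ when $\ell$ is ramified in $\OO_{d_1}$ and equals $1$ or $0$ according as $a$ is even or odd when $\ell$ is inert; combining this with $\rho_\ell$ (which is $2$ precisely when $\ell\mid d_1$ and $\ell\mid m$, and $1$ otherwise) and with the fact that $r\geq1$ (so that $\ell\nmid m$ forces $\ell^{-r}m\notin\Z$ in the ramified case), a direct comparison with the three cases defining $\varepsilon_\ell$ gives $\rho_\ell\,\frakA_\ell(\ell^{v_\ell(m)-r})=\varepsilon_\ell(\ell^{-r}m)$. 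Multiplying all the local identities together yields the proposition. The only genuinely non-routine step is the pair of valuation computations in the second paragraph — especially the $2$-adic cases — where I expect to spend the most care; the rest is unwinding the definitions of $\rho$, $\frakA$, and $\varepsilon_\ell$.
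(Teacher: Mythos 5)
Your proposal is correct and follows essentially the same route as the paper: decompose $\frakA(\ell^{-r}m)$ and $\rho(m)$ into local factors at each prime (using the primary decomposition and maximality of $\OO_{d_1}$ away from $f_1$) and compare case by case with the displayed product, treating the factor at $\ell$ via $\varepsilon_\ell$. The additional checks you make explicit — that both sides vanish when $\ell$ splits, that $(d_1,-m)_p=1$ whenever $p\mid d_1$ with $p\nmid f_1\ell m$, and that $v_p(m)=1$ is impossible for $p\mid(d_1,f_2)$ — are exactly the facts the paper's proof uses implicitly, and your verifications of them are sound.
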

	\begin{proof}
		Recall that 
		\begin{align*}
			\rho(m) = &
				\begin{cases}
					0 &\textup{if } (d_1, -m)_p = -1 \textup{ for }p|d_1, p\ne \ell d_1,\\
					2^{\#\{p|\textup{gcd}(m, d_1) : p\nmid f_2
					\textup{ or } p= \ell\}} & \textup{otherwise},
				\end{cases}\\
			\textup{ and }\frakA(N) = & \#\left\{
				\begin{array}{ll}
					& \Norm(\frakb) = N, \frakb \textup{ invertible},\\
					\frakb\subseteq\OO_{d_1} : & p\nmid\frakb
					\textup{ for all }p | \textup{gcd}(N, f_2), p\nmid \ell d_1\\
					& \frakp^3\nmid\frakb\textup{ for all }
					\frakp|p|\textup{gcd}(N, f_2, d_1), p\ne\ell
				\end{array}
				\right\}.
		\end{align*}
		If $\ell^{-r}m$ is not an integer, then one can easily see that both sides are $0$.  One can also check that both sides are $0$ if $v_{\ell}(\ell^{-r}m) \equiv 1\pmod2$ and $\ell\nmid d_1$.  So we may assume that $v_{\ell}(m)\geq r$ and that if $v_{\ell}(\ell^{-r}m) \equiv 1 \pmod 2$ then $\ell|d_1$.
		
		Under these assumptions, it is an exercise to show that the number of invertible integral ideals $\frakb$ of norm $\ell^{-r}m$ is equal to
		\[
			\prod_{p|m, p\nmid \ell d_1}
			\begin{cases}
				1 + v_p(m) & \textup{if }\left(\frac{d_1}{p}\right) = 1,\\
				\frac12(1 + (-1)^{v_p(m)}) & \textup{if }\left(\frac{d_1}{p}\right) = -1,
			\end{cases}
		\]
        {(here we use that $\ell$ is not split in $\OO_{d_1}$.)}
		If we impose the condition that $p\nmid\frakb$ for all $p | \textup{gcd}(m\ell^{-r}, f_2), p\nmid \ell d_1$, then for any $p| f_2, p\nmid d_1\ell$ the local factor becomes $0$ if $p$ is inert in $\OO_{d_1}$ and $2$ if $p$ is split.  If we additionally assume that $\frakp^3\nmid\frakb$ for all $\frakp|p|\textup{gcd}(\ell^{-r}m, f_2, d_1), p\ne\ell$, then we introduce a local factor at $p$ that is $0$ when $p|d_1$, $p\neq\ell$ and $v_p(m)\geq3$.  In summary, we have
		\begin{equation}\label{eqn:LocalFactorsForFrakA}
			\frakA(\ell^{-r}m) = \prod_{p\mid m, p \ne \ell}
			\begin{cases}
				1 + v_p(m) &
					\textup{if }\left(\frac{d_1}{p}\right)= 1,
					p\nmid f_2,\\
				2 & \textup{if }\left(\frac{d_1}{p}\right)= 1,
					p\mid f_2,\\
				\frac12\left(1 + (-1)^{v_p(m)}\right) &
					\textup{if }\left(\frac{d_1}{p}\right)= -1,
					p\nmid f_2,\\
				0 & \textup{if }\left(\frac{d_1}{p}\right)= -1,
					p\mid f_2,\\
               { 0} & {\textup{if }p|(d_1,f_2), v_p(m)\geq 3,}\\
                {1} & {\textup{otherwise.}}
			\end{cases}
		\end{equation}
		
		It is clear from the definition of $\rho(m)$ that
		\[
			\rho(m) = \prod_{p|\textup{gcd}(d_1,m)}
				\begin{cases}
					0 & \textup{if }(d_1, -m)_p = -1, p\neq \ell,\\
					1 & \textup{if }(d_1, -m)_p = 1, p|f_2,{p\neq\ell}\\
					2 & {\textup{otherwise}.}
				\end{cases}
		\]
        {Combining these products completes the proof.}
	\end{proof}

\section{Relating $S_{n}^{\Lie}$ to $S_{n}$}\label{sec:LieToEndo}

	We retain the notation from the previous sections.  In this section we prove the following proposition.
	
    \begin{prop}\label{prop:LieToEndo}
        Assume that $d_1$ is fundamental at $\ell$ and that either $\ell>2$ or $\textup{gcd}(d_1,\widetilde{d}_2)$ is odd. Fix $[\tau_1]$ of discriminant $d_1$, and let $m$ be a nonnegative integer divisible by $\ell$.

        \noindent Then if $\ell|f_2$, we have:
        \[
            \#S_{1,m}^{\Lie}(E(\tau_1)/A) =
                \begin{cases}
                    \frac{1}2\#S_{1,m}(E(\tau_1)/\WW) &
                        \textup{if }\ell\nmid \widetilde{d}_2,\\
                    \#S_{1,m}(E(\tau_1)/\WW) & \textup{if }\ell|\widetilde{d}_2,
                \end{cases}
        \]
        and if $\ell\nmid f_2$, we have:
        \[
            \sum_n \#S_{n,m}^{\Lie}(E(\tau_1)/A) =
                \begin{cases}
                    \frac12 \sum_n\#S_{n,m}(E(\tau_1)/\WW) & \textup{if }\ell|d_1\textup{ or }\ell\nmid\widetilde{d}_2,\\
                    \sum_n\#S_{n,m}(E(\tau_1)/\WW) & \textup{if }\ell\nmid d_1, \ell| \widetilde{d}_2.
                \end{cases}
        \]
    \end{prop}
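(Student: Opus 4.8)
First I would dispose of the split case: if $\ell$ splits in $\OO_{d_1}$ then $E\bmod\mu$ is ordinary with $\End(E\bmod\mu)=\OO_{d_1}$, no $\phi\in\End_{A/\mu^n}(E)$ can generate an order of discriminant $d_2\ne d_1$ that is optimal away from $\ell$, and both sides of each asserted identity vanish. So I may assume $\ell$ is non-split in $\OO_{d_1}$, $E\bmod\mu$ is supersingular, and $R_1:=\End_{A/\mu}(E)=\End_{\WW/\pi}(E)$ is a maximal order of $\BBl$. Since the truncation maps $\End_{A/\mu^{n+1}}(E)\hookrightarrow\End_{A/\mu^n}(E)$ are injective, all the sets in play sit inside one fixed finite set:
\[
	S_n^{\Lie}(E/A)\ \subseteq\ S_n(E/A)\ \subseteq\ S_1(E/\WW).
\]
On $S_1(E/\WW)$ the involution $\phi\mapsto\phi^\vee=d_2-\phi$ acts; it is fixed-point free (a fixed point would be the scalar $d_2/2$, which fails $\phi^2-d_2\phi+\tfrac14(d_2^2-d_2)=0$), it preserves each $S_n(E/A)$, and on every $\Lie(E\bmod\mu^k)$ it sends the scalar by which $\phi$ acts to its conjugate. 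The whole argument is then an orbit-and-ramification count of which elements of $S_1(E/\WW)$ lie in $S_n(E/A)$ and satisfy the $\Lie$-conditions, organized by how far $\sqrt{d_2}$ (resp. $\sqrt{\widetilde d_2}$) is from $0$ modulo $\mu$.

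\textbf{The case $\ell\nmid f_2$.} Here $s_2=0$, so $\widetilde d_2=d_2$, $\widetilde\delta=\delta$, and $S_n^{\Lie}(E/A)=\{\phi\in S_n(E/A): \phi\equiv\delta\text{ in }\Lie(E\bmod\mu^n)\}$, the condition in $\Lie(E\bmod\mu)$ being implied. The scalar by which $\phi$ acts on the free rank-one $A/\mu^n$-module $\Lie(E\bmod\mu^n)$ is a root of the defining quadratic, hence one of $\delta,\delta^\vee$, and these are distinct modulo $\mu^n$ exactly when $n>\tfrac12\,e(\mu/\ell)\,v_\ell(d_2)$. When $\ell\nmid f_2$ the ramification of $\ell$ in $L$ is only that of $\Q(\sqrt{d_1})/\Q$, so $A=\WW$ unless $\ell\mid d_2$ and $\ell\nmid d_1$, in which case $A=\WW(\sqrt{d_2})$ is ramified quadratic over $\WW$. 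If $A=\WW$ and $\ell\nmid\widetilde d_2$, the two roots are already distinct modulo $\mu$, so each orbit of the free involution meets $S_n^{\Lie}$ in exactly one point and $\#S_n^{\Lie}(E/A)=\tfrac12\#S_n(E/\WW)$; summing gives the claim. If $A=\WW$ and $\ell\mid d_1$ (so $\ell$ is odd by the standing hypothesis; the only new possibility is $\ell\mid(d_1,d_2)$), the orbit count still gives $\tfrac12\#S_n(E/\WW)$ for $n\ge2$ and the $n=1$ term is controlled directly from the shape $R_n=\OO_{d_1}+\frakl^{\,n-1}R_1$ of~\eqref{eq:Rnprop-inert}. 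If $A=\WW(\sqrt{d_2})$, I would use the formal-moduli description of $\End_{A/\mu^n}(E)$ over a ramified quadratic base to express $\#S_n(E/A)$ in terms of $\#S_{\lceil n/2\rceil}(E/\WW)$; the $\Lie$-condition is then vacuous modulo $\mu$ and halves for $n\ge2$, and the two effects combine to the full sum $\sum_n\#S_n(E/\WW)$ (no factor $\tfrac12$), matching the stated dichotomy.

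\textbf{The case $\ell\mid f_2$.} Now $s_2\ge1$, the curves $E(\tau_2)$ are quasi-canonical liftings of level $s_2$ in the sense of~\cite{Gross-CanonicalLifts}, defined over ramified extensions whose ramification degrees are recorded in $e_2$, and $\widetilde\phi$ has CM by the $\ell$-maximal order $\OO_{\widetilde d_2}$. For $1\le n\le e_2$ I would combine~\cite[\S\S3,5]{Gross-CanonicalLifts} with Lubin--Tate theory~\cite{LT-FormalModuli} to determine exactly which $\phi\in R_1$ lift into $\End_{A/\mu^n}(E)$ — the condition being that $\widetilde\phi$ act on $\Lie(E\bmod\mu^n)$ compatibly with the quasi-canonical structure — and then impose the two $\Lie$-conditions of $S_n^{\Lie}$. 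The point to extract is that $\#S_n^{\Lie}(E/A)$ is \emph{independent of} $n$ for $1\le n\le e_2$, and equals $\tfrac12\#S_1(E/\WW)$ when $\ell\nmid\widetilde d_2$ (the roots of the characteristic polynomial of $\widetilde\phi$ are distinct modulo $\mu$ — using $\ell$ odd, or the hypothesis — so the involution $\widetilde\phi\mapsto\widetilde d_2-\widetilde\phi$ cuts the count in half) and equals $\#S_1(E/\WW)$ when $\ell\mid\widetilde d_2$ (the roots coincide modulo $\mu$, so nothing is cut; the hypothesis forces $\ell$ odd here). Summing over $n=1,\dots,e_2$ produces the factors $e_2/2$, resp. $e_2$.

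\textbf{The main obstacle.} The hard part is the case $\ell\mid f_2$: identifying $\End_{A/\mu^n}(E)$ over the strongly ramified base $A$ and proving the $n$-independence of $\#S_n^{\Lie}(E/A)$ requires the full force of the quasi-canonical lifting machinery, and it is exactly here that $\ell=2$ ramified in both $\Q(\sqrt{d_1})$ and $\Q(\sqrt{\widetilde d_2})$ must be excluded: with several ramified quadratic extensions of $\Q_2^{\unr}$ available, the reduction of $\sqrt{\widetilde d_2}$ modulo $\mu$ — equivalently the action of $\widetilde\phi$ on $\Lie(E\bmod\mu)$ — is no longer pinned down, and the orbit argument for the involution collapses. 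The degenerate boundary subcases in the case $\ell\nmid f_2$ (namely $\ell\mid(d_1,d_2)$, and $A$ ramified quadratic over $\WW$) are a secondary obstacle: there the soft orbit count does not by itself close, and one must fall back on the explicit presentations of the orders $R_n(\fraka)$ from~\S\ref{sec:QuaternionOrders}.
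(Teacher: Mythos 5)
Your opening framework coincides with the paper's: the Lie image of $\phi_0$ satisfies the quadratic $z^2-d_2z+\tfrac14(d_2^2-d_2)$, hence equals $\delta$ or $\delta^\vee$, and when the two roots are distinct modulo $\mu$ the involution $\phi\mapsto\phi^\vee$ halves the count. But beyond that the proposal is a plan rather than a proof, and the cases it defers are exactly where the content of the proposition lies. The most serious gap is the case $\ell\nmid f_2$, $\ell\mid\gcd(d_1,\widetilde{d}_2)$: there $\delta\equiv\delta^\vee\pmod\mu$, so the orbit argument gives \emph{no} halving at level one (in fact $\#S_1^{\Lie}=\#S_1$), and your assertion that ``the orbit count still gives $\tfrac12\#S_n(E/\WW)$ for $n\ge2$'' is both unjustified and not what actually happens: the paper must compute the Lie map on the noncommutative part, writing $\phi_0=[\alpha,\beta]$ and using $c([0,1])=\pm\sqrt{-q}$, then run valuation arguments on $\beta\pm\betabar$ split by the parity of $k$, invoking the norm relation $\alpha\alphabar+q\beta\betabar=\tfrac14(d_2^2-d_2)$ and the specific choice of $q$ (so that $\sqrt{-q}\notin\Q_\ell$) from \S\ref{sec:QuaternionOrders}; the overall factor $\tfrac12$ emerges only from shifted identities of the form $\#S_n^{\Lie}=\tfrac12\#S_{n+1}$ together with $\#S_1=\#S_2$, not from a level-by-level halving. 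Appealing to ``the shape $R_n=\OO+\frakl^{n-1}R_1$'' for $n=1$ does not substitute for this computation.

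The $\ell\mid f_2$ case is likewise only sketched: the $n$-independence of $\#S_n^{\Lie}$ for $n\le e_2$ and the decision between the factors $\tfrac12$ and $1$ are asserted via ``quasi-canonical lifting plus Lubin--Tate machinery,'' whereas the actual mechanism is to show that $\delta,\widetilde{\delta},c(\phi_0),c(\widetilde{\phi}_0)$ lie in the maximal $\ell$-unramified extension of $H_{2,\mu_2}$ (so a congruence modulo $\mu$ propagates to modulo $\mu^{\min(e_2,k)}$) and then to compute $c(\widetilde{\phi}_0)-c(\widetilde{\phi}_0^{\vee})$ explicitly to decide whether it is a $\mu$-adic unit; when $\ell=2\mid d_1$ and $\widetilde{d}_2$ is odd this requires the further device of evaluating $c$ on $[\lambda\pi^{1-v_2(d_1)},\pi^{1-v_2(d_1)}]$, which you do not address. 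Finally, in the subcase $\ell\nmid d_1$, $\ell\mid\widetilde{d}_2$, $\ell\nmid f_2$, your proposed relation between $\#S_n(E/A)$ and $\#S_{\lceil n/2\rceil}(E/\WW)$ is unproven and unnecessary: the paper simply notes $S_n(E/\WW)=\varnothing$ for $n\ge2$ (since $v_\ell(m)\le1$) and compares $v_\mu(\widetilde\delta)$ with $v_\mu(c(\widetilde{\phi}_0))$. As it stands, the proposal identifies the right starting point but leaves the decisive valuation computations --- and hence the proof --- missing.
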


    We first prove a lemma which will be useful in the proof of Proposition~\ref{prop:LieToEndo}.  Recall that if $S_{n,m}(E)\neq\emptyset$ for some elliptic curve $E$, then by~\S\ref{sec:CountingEndomorphisms}, $m = \frac14\left(d_1d_2 - (d_1d_2 - 2t)^2\right)$, where $2t = \Tr((d_1 + \sqrt{d_1})\phi^{\vee})$ for some $\phi \in S_{n,m}(E)$.
    \begin{lemma}\label{lem:mValuation}
        Assume that $s_2 = 0$.  If $S_{1,m}(E(\tau_1))\neq\emptyset$ for some $\tau_1$ of discriminant $d_1$ then $v_{\ell}(m) = 1 $ if $\ell|d_1\tilde d_2$ and $\ell\nmid \gcd(d_1,\tilde d_2)$, and $v_{\ell}(m) > 1$ if $\ell|\gcd(d_2,\tilde d_2)$ and $\ell\neq 2$.
    \end{lemma}
    \begin{proof}
        This follows from a simple calculation.
    \end{proof}
    \begin{proof}[Proof of Proposition~\ref{prop:LieToEndo}]
        Write $E$ for $E(\tau_1)$.  Recall that
        \[
        S_{n,m}^{\Lie}(E/A) = \left\{\phi\in S_{n,m}(E/A) : c(\phi) = \delta, \textup{ and }c(\widetilde\phi) \equiv \widetilde\delta \bmod \mu\right\}.
        \]

        Assume first that $m=0$. Then $d_2 = d_1\ell^{2s_2}$, $\tilde d_2 = d_1$, and we may assume that $\tilde\delta = \frac12(d_1 + \sqrt{d_1})$.  Since $S_{1,m}(E(\tau_1)) = \left\{\frac12(\ell^{2s_2}d_1 \pm\ell^{s_2}\sqrt{d_1})\right\}$, $\#S_{1,m}^{\Lie}(E) = \frac12\#S_{1,m}(E)$ if $\ell\nmid d_1$ and $\#S_{1,m}^{\Lie}(E) = \#S_{1,m}(E)$ if $\ell\mid d_1$.

    Henceforth we assume that $m>0$.  In this case $E$ must be supersingular and we may embed $\End(E)$ into $\Mat_2(\Q(\sqrt{d_1}))$ as described in~\S\ref{sec:QuaternionOrders}.

     Let $\phi\in S_{n,m}(E/\WW)$; then $v_{\ell}(m)\geq 2n - 1$ if $\ell\nmid d_1$ and $v_{\ell}(m) \geq n$ if $\ell | d_1$.  To determine if $\phi\in S_{n,m}^{\Lie}(E)$, we must determine $c(\phi)$ and $c(\widetilde \phi)$.  Since $c$ factors through the formal group, we will consider elements in $\BBl\cap\Z_{\ell}\otimes\End_{A/\mu^r}(E).$  By the theory of complex multiplication, we know that $c([\alpha, 0]) = \alpha$ for any $\alpha \in \OO$.  Since $c$ is a homomorphism, we have
        \[
            (c(\phi) - \delta)(c(\phi^{\vee}) - \delta) =
            \frac14(d_2^2 - d_2) - d_2\delta + \delta^2 = 0,
        \]
        and similarly for $\widetilde{\phi}$ and $\widetilde{\delta}$. In addition, $c(\widetilde \phi) - \delta + c(\widetilde\phi^\vee) - \delta = d_2 - 2\delta = -\sqrt{\widetilde d_2}$.   Therefore, at least one of $c(\widetilde\phi)$, $c(\widetilde\phi_0)$ is congruent to $\widetilde \delta$ modulo $\mu$, and both are congruent to $\widetilde \delta$ modulo $\mu$ if and only if $\ell\mid \widetilde d_2$.  This information is already enough to prove the proposition under the assumption that $\ell\mid f_2$ or $\ell\nmid d_2$.

       Assume that $\ell | d_2$, $s_2 = 0$ and $\ell\nmid d_1$.  Then by Lemma~\ref{lem:mValuation}, $S_{n,m}(E/\WW) = \emptyset$ for all $n > 1$.  Since $A$ is a degree $2$ ramified extension of $\WW$, this implies that $S_{n,m}(E/A) = \emptyset$ for all $n > 2$.  Further, the arguments from the previous paragraph show that $S_{1,m}^{\Lie}(E/A) = S_{1,m}(E/\WW)$.  It remains to consider $S_{2,m}^{\Lie}(E/A)$.  Although $S_{2,m}(E/A) = S_{1,m}(E/\WW)$, which may be nonempty, $\delta$ is not congruent modulo $\mu^2$ to any element of $\WW$.  In particular $\delta$ is not congruent to $c(\phi)$ or $c(\phi^{\vee})$ modulo $\mu^2$.  Therefore $S_{2,m}^{\Lie}(E/A)=\emptyset$, and so $\sum_n\#S_{n,m}^{\Lie}(E/A) = \sum_n\#S_{n,m}(E/\WW)$ as claimed.

        The remaining case is when $s_2 = 0$ (so $\widetilde d_2 = d_2$) and $\ell|\gcd(d_1,\tilde d_2)$.  By assumption, we may restrict to the case where $\ell > 2$, which implies that $A = \WW$.  The above arguments show that $S_{1,m}^{\Lie}(E/A) = S_{1,m}(E/A)$.  In addition, since $v_{\mu}((c( \phi) - \delta) + (c(\phi^\vee) - \delta))) = 1$, at least one of $v_{\mu}(c(\phi) - \delta), v_\mu(c( \phi^\vee) - \delta)$ is $1$; without loss of generality we will assume that $v_\mu(c(\phi^{\vee}) - \delta) = 1$.

        To conclude anything further, we must first determine a more complete description of $c$.  From the results in~\S\ref{sec:CountingEndomorphisms}, we have that any $\phi\in S_{n,m}(E/\WW)$ can be written as
        \[
            \phi = \left[\frac1{2\sqrt{d_1}}\left(d_1d_2 - 2t + d_2\sqrt d_1\right),0\right] + [\beta,0]\cdot[0,1]
        \]
        for some $\beta\in \Q(\sqrt{d_1})$ with $\Norm(\beta) = m/(-qd_1)$. Thus,
        \begin{align*}
            c(\phi) - \delta & = \frac1{2\sqrt{d_1}}\left(d_1d_2 - 2t - \sqrt{d_2d_1}\right) \pm \beta\sqrt{-q} \quad \textup{in } A/\mu^{n}, \textup{ and }\\
            c(\phi^{\vee}) - \delta & = \frac1{2\sqrt{d_1}}\left( 2t -d_1d_2  - \sqrt{d_2d_1}\right) \mp \beta\sqrt{-q} \quad \textup{in } A/\mu^{n}.
        \end{align*}
        Note that $v_{\mu}(\beta) = v_{\mu}(\beta\sqrt{-q}) = v_{\ell}(m) - 1$, and
        \[
            v_{\mu}\left(d_1d_2 - 2t - \sqrt{d_2d_1}\right) +
            v_{\mu}\left(2t - d_1d_2  - \sqrt{d_2d_1}\right) = 2v_{\ell}(m).
        \]
        Since by Lemma~\ref{lem:mValuation}, $v_{\ell}(m) > 1$ and by assumption we have $v_\mu(c(\phi^{\vee}) - \delta) = 1$, we must have
        \[
            v_{\mu}\left(\frac1{2\sqrt{d_1}}\left(d_1d_2 - 2t - \sqrt{d_2d_1}\right)\right) = 2(v_{\ell}(m) - 1).
        \]
        If $v_{\ell}(m) > 2$, then $c(\phi) \equiv \delta\pmod{\mu^n}$ if and only if $n \leq v_{\ell}(m) - 1$.

        Assume that $v_{\ell}(m) = 2$ and recall that $-q\notin\Q_{\ell}^{\times2}$.  If $d_1d_2 \in \Q_{\ell}^{\times2}$, then $c(\phi) - \delta \in \mu^2$ only if $\beta\in \mu^2$.  However, $v_{\mu}(\beta) = v_{\ell}(m) - 1 = 1$, so we must have $c(\phi) - \delta \in \mu\setminus\mu^2$.  If $d_1d_2\notin \Q_{\ell}^{\times2}$, then since $\ell$ is odd we have $-qd_1d_2\in \Q_{\ell}^{\times2}$.  Therefore, we may rewrite $c(\phi) - \delta$ as a linear combination of the $\Q_{\ell}(\sqrt{d_1})$-linearly independent elements $1$ and $\sqrt{-q}$ as follows:
        \[
            \frac1{2\sqrt{d_1}}(d_1d_2 - 2t) + \sqrt{-q}\left(\pm \beta - \sqrt{\frac{d_2}{-4qd_1}}\sqrt{d_1}\right).
        \]
        From this expression, we see that $c(\phi) \equiv \delta\bmod{\mu^2}$ if and only if $v_{\ell}(d_1d_2 - 2t) > 1$ and $\pm\beta/\sqrt{d_1} \equiv \sqrt{\frac{d_2}{-4qd_1}}\bmod \mu$, which would imply that $\Norm(\beta) \equiv d_2/4q\bmod{\ell^2}$.  However, $\Norm(\beta) = m/(-qd_1)$ which, under the assumption that $v_{\ell}(d_1d_2 - 2t) > 1$, implies that $\Norm(\beta) \equiv d_2/(-4q)\bmod{\ell^2}$.  Therefore we must have that $c(\phi) - \delta \in \mu\setminus\mu^2$.

        In summary, we have shown that for $n> 2$, we have $\#S_{n,m}^{\Lie}(E/A) = \frac12\#S_{n+1,m}(E/A)$.  In addition, by Lemma~\ref{lem:mValuation}, $S_{1,m}(E/A) = S_{2,m}(E/A)$.  Hence, we have
        \begin{align*}
            \sum_n\#S_{n,m}^{\Lie}(E/A) & =
            \#S_{1,m}^{\Lie}(E/A) + \sum_{n\geq 2}\#S_{n,m}^{\Lie}(E/A)\\
            &= S_{1,m}(E/A) + \sum_{n\geq 3}\frac12\#S_{n,m}(E/A)\\
            &= \frac12\#S_{1,m}(E/A) + \frac12\#S_{2,m}(E/A)
            + \sum_{n\geq 3}\frac12\#S_{n,m}(E/A),
        \end{align*}
        as desired.
    \end{proof}

	\begin{bibdiv}
		\begin{biblist}
	
			\bib{CasselsFroehlich}{book}{
			   author={Cassels, J.W.S},
			   author = {Fr\"ohlich, A.},
			   title={Algebraic number theory},
			   series={Proceedings of an instructional conference organized by
				the London Mathematical Society (a NATO Advanced Study
				Institute) with the support of the Inter national Mathematical
				Union.},
			   publisher={Academic Press},
			   place={London},
			   date={1967},
			   pages={xviii+366},
			   review={\MR{0215665 (35 \#6500)}},
			}
	
			\bib{Conrad-GZ}{article}{
			   author={Conrad, Brian},
			   title={Gross-Zagier revisited},
			   note={With an appendix by W. R. Mann},
			   conference={
			      title={Heegner points and Rankin $L$-series},
			   },
			   book={
			      series={Math. Sci. Res. Inst. Publ.},
			      volume={49},
			      publisher={Cambridge Univ. Press},
			      place={Cambridge},
			   },
			   date={2004},
			   pages={67--163},
			   review={\MR{2083211 (2005h:11121)}},
			   doi={10.1017/CBO9780511756375.006},
			}
	
			\bib{Cox-PrimesOfTheForm}{book}{
			   author={Cox, David A.},
			   title={Primes of the form $x^2 + ny^2$},
			   series={A Wiley-Interscience Publication},
			   note={Fermat, class field theory and complex multiplication},
			   publisher={John Wiley \& Sons Inc.},
			   place={New York},
			   date={1989},
			   pages={xiv+351},
			   isbn={0-471-50654-0},
			   isbn={0-471-19079-9},
			   review={\MR{1028322 (90m:11016)}},
			}
	
			\bib{Dorman-Orders}{article}{
			   author={Dorman, David R.},
			   title={Global orders in definite quaternion algebras as endomorphism
			   rings for reduced CM elliptic curves},
			   conference={
			      title={Th\'eorie des nombres},
			      address={Quebec, PQ},
			      date={1987},
			   },
			   book={
			      publisher={de Gruyter},
			      place={Berlin},
			   },
			   date={1989},
			   pages={108--116},
			   review={\MR{1024555 (90j:11043)}},
			}
	
			\bib{Dorman-SpecialValues}{article}{
			   author={Dorman, David R.},
			   title={Special values of the elliptic modular function and
					factorization formulae},
			   journal={J. Reine Angew. Math.},
			   volume={383},
			   date={1988},
			   pages={207--220},
			   issn={0075-4102},
			   review={\MR{921991 (89k:11026)}},
			   doi={10.1515/crll.1988.383.207},
			}
	
			\bib{AWS}{misc}{
				author = {Dose, Valerio},
				author = {Green, Nathan},
				author = {Griffin, Michael},
				author = {Mao, Tianyi},
				author = {Rolen, Larry},
				author = {Willis, John},
				title = {Singular moduli for a distinguished non-holomorphic modular function},
				note = {Preprint, to appear in \textit{Proc. Amer. Math. Soc.}}
			}
	
			%
			\bib{Eichler}{book}{
			   author={Eichler, Martin},
			   title={Lectures on modular correspondences},
			   note={Notes by S.S. Rangachari},
			   publisher={Bombay, Tata Institute of Fundamental Research},
			   date={1957},
			}
			
			\bib{Elkies}{article}{
			   author={Elkies, Noam D.},
			   title={Supersingular primes for elliptic curves over real number fields},
			   journal={Compositio Math.},
			   volume={72},
			   date={1989},
			   number={2},
			   pages={165--172},
			   issn={0010-437X},
			   review={\MR{1030140 (90i:11058)}},
			}
	
			\bib{GL}{article}{
			   author={Goren, Eyal},
			   author={Lauter, Kristin},
			   title={A Gross--Zagier formula for quaternion algebras over totally real
			   fields},
			   journal={Algebra Number Theory},
			   volume={7},
			   date={2013},
			   number={6},
			   pages={1405--1450},
			   issn={1937-0652},
			   review={\MR{3107568}},
			   doi={10.2140/ant.2013.7.1405},
			}
	
			\bib{Gross-CanonicalLifts}{article}{
			   author={Gross, Benedict H.},
			   title={On canonical and quasicanonical liftings},
			   journal={Invent. Math.},
			   volume={84},
			   date={1986},
			   number={2},
			   pages={321--326},
			   issn={0020-9910},
			   review={\MR{833193 (87g:14051)}},
			   doi={10.1007/BF01388810},
			}

			\bib{GZ-SingularModuli}{article}{
			   author={Gross, Benedict H.},
			   author={Zagier, Don B.},
			   title={On singular moduli},
			   journal={J. Reine Angew. Math.},
			   volume={355},
			   date={1985},
			   pages={191--220},
			   issn={0075-4102},
			   review={\MR{772491 (86j:11041)}},
			}
	
			\bib{Hutchinson}{article}{
			   author={Hutchinson, Tim},
			   title={A conjectural extension of the Gross-Zagier formula on
					singular moduli},
			   journal={Tokyo J. Math.},
			   volume={21},
			   date={1998},
			   number={1},
			   pages={255--265},
			   issn={0387-3870},
			   review={\MR{1621530 (99j:11131)}},
			   doi={10.3836/tjm/1270042000},
			}
			
			\bib{Kaneko}{article}{
			   author={Kaneko, Masanobu},
			   title={Supersingular $j$-invariants as singular moduli ${\rm mod}\, p$},
			   journal={Osaka J. Math.},
			   volume={26},
			   date={1989},
			   number={4},
			   pages={849--855},
			   issn={0030-6126},
			   review={\MR{1040429 (91c:11033)}},
			}

			\bib{Lang-EllipticFunctions}{book}{
			   author={Lang, Serge},
			   title={Elliptic functions},
			   series={Graduate Texts in Mathematics},
			   volume={112},
			   edition={2},
			   note={With an appendix by J. Tate},
			   publisher={Springer-Verlag},
			   place={New York},
			   date={1987},
			   pages={xii+326},
			   isbn={0-387-96508-4},
			   review={\MR{890960 (88c:11028)}},
			}

			\bib{LV-Igusa}{misc}{
				author = {Lauter, Kristin},
				author = {Viray, Bianca},
				title = {An arithmetic intersection formula for denominators of Igusa class polynomials}
				note = {Preprint, available at {\tt arXiv:1210.7841}}
			}
			
			\bib{LT-FormalModuli}{article}{
			   author={Lubin, Jonathan},
			   author={Tate, John},
			   title={Formal moduli for one-parameter formal Lie groups},
			   journal={Bull. Soc. Math. France},
			   volume={94},
			   date={1966},
			   pages={49--59},
			   issn={0037-9484},
			   review={\MR{0238854 (39 \#214)}},
			}
	
            \bib{Meusers-CanonicalLifts}{article}{
               author={Meusers, Volker},
               title={Canonical and quasi-canonical liftings in the split case},
               language={English, with English and French summaries},
               journal={Ast\'erisque},
               number={312},
               date={2007},
               pages={87--98},
               issn={0303-1179},
               isbn={978-2-85629-231-0},
               review={\MR{2340373 (2008g:11101)}},
            }
	
    		\bib{Neukirch}{book}{
			   author={Neukirch, J{\"u}rgen},
			   title={Algebraic number theory},
			   series={Grundlehren der Mathematischen Wissenschaften [Fundamental
			   Principles of Mathematical Sciences]},
			   volume={322},
			   note={Translated from the 1992 German original and with a note by Norbert
			   Schappacher;
			   With a foreword by G. Harder},
			   publisher={Springer-Verlag},
			   place={Berlin},
			   date={1999},
			   pages={xviii+571},
			   isbn={3-540-65399-6},
			   review={\MR{1697859 (2000m:11104)}},
			}

			\bib{Parent}{article}{
			   author={Parent, Pierre J. R.},
			   title={Towards the triviality of $X^+_0(p^r)(\Bbb Q)$ for
			   $r>1$},
			   journal={Compos. Math.},
			   volume={141},
			   date={2005},
			   number={3},
			   pages={561--572},
			   issn={0010-437X},
			   review={\MR{2135276 (2006a:11076)}},
			   doi={10.1112/S0010437X04001022},
			}
			
			\bib{RubinSilverberg}{article}{
			   author={Rubin, K.},
			   author={Silverberg, A.},
			   title={Choosing the correct elliptic curve in the CM method},
			   journal={Math. Comp.},
			   volume={79},
			   date={2010},
			   number={269},
			   pages={545--561},
			   issn={0025-5718},
			   review={\MR{2552240 (2010i:11091)}},
			   doi={10.1090/S0025-5718-09-02266-2},
			}
	
			
			\bib{SerreTate}{article}{
			   author={Serre, Jean-Pierre},
			   author={Tate, John},
			   title={Good reduction of abelian varieties},
			   journal={Ann. of Math. (2)},
			   volume={88},
			   date={1968},
			   pages={492--517},
			   issn={0003-486X},
			   review={\MR{0236190 (38 \#4488)}},
			}
			
			\bib{Vigneras}{book}{
			   author={Vign{\'e}ras, Marie-France},
			   title={Arithm\'etique des alg\`ebres de quaternions},
			   language={French},
			   series={Lecture Notes in Mathematics},
			   volume={800},
			   publisher={Springer},
			   place={Berlin},
			   date={1980},
			   pages={vii+169},
			   isbn={3-540-09983-2},
			   review={\MR{580949 (82i:12016)}},
			}
			
		\end{biblist}
	\end{bibdiv}
	
\end{document}